\documentclass[11pt,reqno]{amsart}
\pdfoutput=1
\usepackage[margin=1.2in,marginparwidth=1.5cm,marginparsep=0.5cm]{geometry}
\usepackage{amsfonts}
\usepackage{amsmath}
\usepackage{amssymb}
\usepackage{mathrsfs}
\usepackage{geometry}
\usepackage{graphicx}
\usepackage{subfigure}
\usepackage{cite}
\usepackage{hyperref}
\usepackage{microtype}
\usepackage{enumerate}
\usepackage{mathrsfs}
\usepackage{tikz}%% drawing
\usepackage{tikz-cd}%% drawing
\usepackage{color}
\usepackage{ragged2e}
 \usepackage{float}
\allowdisplaybreaks[1]
\numberwithin{equation}{section}
\usepackage{needspace}

\newcommand*{\circled}[1]{\lower.7ex\hbox{\tikz\draw (0pt, 0pt)%
    circle (.5em) node {\makebox[1em][c]{\small #1}};}}

\newcommand{\al}{\alpha}
\newcommand{\be}{\beta}

\newcommand{\ve}{\varepsilon}

\newcommand{\R}{\mathbb{R}}
\newcommand{\N}{\mathbb{N}}

\newcommand{\Q}{\mathbb{Q}}
\newcommand{\Z}{\mathbb{Z}}
\newcommand{\T}{\mathbb{T}}

\newcommand{\n}[1]{\Vert #1\Vert }

\newcommand{\bbn}[1]{\Big\Vert #1 \Big \Vert }
\newcommand{\lr}[1]{\left\{ #1\right\} }
\newcommand{\lrc}[1]{\left[ #1\right] }
\newcommand{\lrs}[1]{\left( #1\right) }

\newcommand{\abs}[1]{|#1| }

\newcommand{\bbabs}[1]{\Big | #1 \Big| }

\newcommand{\pa}{\partial}

\newtheorem{theorem}{Theorem}[section]
\newtheorem{lemma}[theorem]{Lemma}
\theoremstyle{definition}
\newtheorem{definition}[theorem]{Definition}

\newtheorem{remark}[theorem]{Remark}

\numberwithin{equation}{section}
\newtheorem{proposition}[theorem]{Proposition}
\newtheorem{corollary}[theorem]{Corollary}

\newtheorem*{ackno}{Acknowledgements}

\newcommand{\ind}{\mathbf 1}
\newcommand{\bfone}{\ind}

\newcommand{\ft}{\widehat}
\newcommand{\wh}{\ft}

\newcommand{\QQ}{\mathcal{Q}}
\newcommand{\RR}{\mathcal{R}}
\newcommand{\cQ}{\QQ}
\newcommand{\cR}{\RR}

\newcommand{\lesssimab}{\lesssim_{a,b}}
\newcommand{\gtrsimab}{\gtrsim_{a,b}}
\newcommand{\simab}{\sim_{a,b}}

\makeatletter
\@namedef{subjclassname@2020}{%
  \textup{2020} Mathematics Subject Classification}
\makeatother

\begin{document}
\baselineskip=14pt

\title[Sharp Strichartz estimates on irrational tori]
{Sharp Strichartz estimates on quadratic irrational tori}

\author[R. Liang, S. Shen, Y. Wang]{Rui Liang, Shunlin Shen, Yuzhao Wang}

\address{Rui Liang\\
School of Mathematical Sciences\\
South China Normal University\\
Guangzhou, Guangdong, 510631, China}
\email{liangrui@m.scnu.edu.cn}

% \author{Shunlin Shen}
\address{Shunlin Shen,
School of Mathematical Sciences, University of Science and Technology of China\\
Hefei, 230026\\
China}
\email{slshen@ustc.edu.cn}

\address{School of Mathematics, University of Birmingham, Watson Building, Edgbaston, Birmingham B15 2TT, United Kingdom}
\email{y.wang.14@bham.ac.uk}

\subjclass[2020]{35Q41}

\keywords{nonlinear Schr\"{o}dinger equation;
hyperbolic Schr\"{o}dinger equation; irrational rectangular tori;
Strichartz estimates; global well-posedness}

\begin{abstract}
We prove sharp $L^{4}$ Strichartz estimates for two-dimensional periodic
Schr\"{o}dinger flows with integral diagonal symbols $ak_{1}^2+bk_2^2$ where $a,b \in \Z\setminus \{0\}$.
The estimates exhibit a sharp arithmetic dichotomy according to whether
$-ab$ is a square: 
for frequencies bounded by $N$, the optimal loss is $(\log N)^{1/4}$ when $-ab$ is not a square and $N^{1/4}$
when $-ab$ is a square.
As a consequence, we show that arbitrarily small perturbations
of the rectangular aspect ratio can lead to a transition
between global well-posedness and norm inflation
for cubic hyperbolic NLS.
\end{abstract}

\maketitle
% {\small
% \tableofcontents
% }

\section{Introduction}\label{section:Introduction}
\subsection{Elliptic and hyperbolic NLS on rectangular tori}\label{section:Elliptic and hyperbolic NLS}
In this paper,
we study how the arithmetic of a rectangular torus affects Strichartz estimates and the well-posedness theory for elliptic and hyperbolic nonlinear Schr\"{o}dinger equations.

Let 
\begin{align*}
\T^{2}_{\al_{1},\al_{2}}=
(\R/2\pi\al_{1}\Z)\times(\R/2\pi\al_{2}\Z), \quad \al_{1},\al_{2}>0,
\end{align*}
be the rectangular torus.
We call the torus \emph{rational}
if $\al_{1}/\al_{2}\in\Q$, and \emph{irrational} otherwise.
On this torus, the cubic elliptic nonlinear Schr\"odinger equation is
\begin{equation}\label{equ:ENLS}
\begin{cases}
 i\pa_{t} u+\lrs{\pa_{y_{1}}^{2}+\pa_{y_{2}}^{2}}u=\mu |u|^{2}u,\\
 u(0)=u_0,
\end{cases}
\quad (y_{1},y_{2})\in\T^{2}_{\al_{1},\al_{2}},
\end{equation}
whereas its hyperbolic counterpart is
\begin{equation}
%\tag{HNLS}
\label{equ:HNLS}
\begin{cases}
i\pa_tu+(\pa_{y_{1}}^{2}-\pa_{y_{2}}^{2})u =\mu |u|^{2}u,\\
        u(0)=u_0,
\end{cases}
\quad (y_{1},y_{2})\in\T^{2}_{\al_{1},\al_{2}},
\end{equation}
Here, $\mu\in\lr{\pm1}$.  
Of particular interest in this work are rectangular tori with
specific aspect ratios $\alpha_1/\alpha_2$.

\begin{definition}[Quadratic irrational rectangular tori]
\label{def:quadratic irrational}
In this paper, a rectangular torus $\T^{2}_{\al_{1},\al_{2}}$ is called
\emph{quadratic irrational} if its aspect ratio
$\rho=\al_{1}/\al_{2}$ satisfies
\begin{align*}
\rho\notin\Q,\quad \rho^{2}\in\Q.
\end{align*}
Equivalently, $\rho=\sqrt{p/q}$, where $p,q$ are coprime positive
integers and $pq$ is not a square.
\end{definition}

Throughout this paper, \emph{quadratic irrational} is used in the
restricted sense of Definition~\ref{def:quadratic irrational}.
The significance of this family is that the equations can be normalized
to have integral dispersion coefficients of the form $ak_{1}^2+bk_2^2$ where $a,b \in \Z\setminus \{0\}$, 
while the hyperbolic symbol when $ab < 0$ has no nonzero lattice null vector. 
We explain this reduction in
Subsection~\ref{section:From rectangular tori}.

\subsection{Background and motivation}
On $\R^{2}$, elliptic and hyperbolic Schr\"odinger propagators have the
same dispersive decay and the same admissible Strichartz indexes.
Both cubic equations are mass-critical by Euclidean scaling.
On compact manifolds, global Euclidean dispersive estimates generally
fail. Burq, G\'erard, and Tzvetkov \cite{BGT} established Strichartz
estimates with derivative loss on compact Riemannian manifolds.
On flat tori, these estimates are closely tied to arithmetic counting
properties of the frequency lattice.

For the elliptic equation \eqref{equ:ENLS} on flat tori, Bourgain's work
\cite{BO93} initiated the systematic use of lattice counting, while
Bourgain--Demeter decoupling \cite{BD15} later yielded essentially sharp
$N^{\ve}$-type bounds for many Strichartz estimates.
At the two-dimensional $L^{4}$ endpoint, Herr--Kwak \cite{HK24}
proved the sharp square-torus estimate, for $N\ge2$,
\begin{align}\label{equ:Strichartz}
\n{e^{it\Delta}P_{\le N}\phi}_{L^{4}([0,2\pi]\times\T^{2})}\lesssim (\log N)^{1/4}\n{\phi}_{L^{2}(\T^{2})}.
\end{align}
Here and below, $P_{\le N}$ denotes the sharp Fourier projection to
$[-N,N]^{2}\cap\Z^{2}$. The logarithm is sharp by the standard box
example; see \cite{BO93,TT01,Kishimoto}.
Herr--Kwak \cite{HK24} also proved a lossless estimate on a logarithmic
time scale, from which they obtained small-mass global well-posedness
for the cubic elliptic equation in $H^{s}(\T^{2})$, for every $s>0$. 
Their subsequent work \cite{HK26} establishes global well-posedness in
the same positive Sobolev range for arbitrary defocusing data and for
focusing data with mass below that of the Euclidean ground state.

For the hyperbolic equation \eqref{equ:HNLS}, rational null
lines create an additional obstruction. For example, on the
standard square torus, the symbol $k_1^2-k_2^2$ vanishes along
the rational lines $k_1=\pm k_2$, so Fourier modes supported
on these lines exhibit no temporal oscillation under the
Schr\"odinger flow. 
Wang \cite{Wang13} proved the sharp estimate, for $N\ge2$,
\begin{align}\label{equ:hyper Strichartz}
\n{e^{it\Box}P_{\le N}\phi}_{L^{4}([0,2\pi]\times\T^{2})}\lesssim N^{1/4}\n{\phi}_{L^{2}(\T^{2})},
\end{align}
where $\Box=\pa_{y_{1}}^{2}-\pa_{y_{2}}^{2}$ is the hyperbolic Laplacian. The sharp $N^{1/4}$ loss reflects the fact that Fourier
modes supported on either null line undergo no temporal oscillation.
Using \eqref{equ:hyper Strichartz}, Wang \cite{Wang13} established local well-posedness
in $H^{s}$ for $s>\frac12$. More recently, Shen--Wang \cite{ShenWang} proved norm inflation for $0<s\le\frac12$, including the threshold regularity $H^{1/2}(\T^{2})$. Thus, the square-torus hyperbolic threshold is $s=\frac12$, whereas the elliptic local theory extends to every $H^{s}$, $s>0$. For further developments on hyperbolic
and more general non-elliptic Schr\"odinger equations, see for example
\cite{GS93,SW24,GT12,BD17,BOW25,BW25,LZ25,LZ26} and the references therein.

The contrast between \eqref{equ:Strichartz} and
\eqref{equ:hyper Strichartz} raises the question of whether
rational null directions of $ak_1^2+bk_2^2$ are the only
obstruction to Strichartz estimates with a sharp logarithmic loss. 
We answer this affirmatively for $a,b \in \Z \setminus \{0\}$; see
Subsection~\ref{section:Sharp estimates and nonlinear consequences}.
These results show that the distinction between the elliptic
and hyperbolic theories on the square torus can change under
arbitrarily small perturbations of the rectangular aspect ratio
(see Subsection~\ref{section:Domain sensitivity}).
The relation of this integral-coefficient setting to long-time
Strichartz estimates \cite{DGG17,DGGRM22,GMO26} is discussed
in Subsection~\ref{section:Comparison}.

\subsection{Main results}\label{section:Main results}
We first normalize the equation, then state the sharp linear estimates
and their nonlinear and geometric consequences.

\subsubsection{From rectangular tori to quadratic forms}\label{section:From rectangular tori}
Assume that the rectangular torus has rational squared aspect ratio:
\begin{equation}\label{equ:qirr}
\left(\frac{\al_1}{\al_2}\right)^2=\frac{p}{q},
\quad p,q\in\N,\quad (p,q)=1.
\end{equation}
Under the change of variables $y_j=\al_jx_j$, a solution $u$
of \eqref{equ:ENLS} or \eqref{equ:HNLS} gives rise to
$v(t,x)=u(t,\al_1x_1,\al_2x_2)$, which satisfies
\begin{align}\label{equ:NLS,v}
i\pa_t v-\Omega_\alpha^\sigma v=\mu|v|^2v
\end{align}
on $\T^2=(\R/2\pi\Z)^2$.
Here, $\sigma=1$ and $\sigma=-1$ correspond to the elliptic
and hyperbolic equations, respectively, and
$\Omega_\alpha^\sigma$ is the Fourier multiplier with symbol
\begin{equation}\label{equ:rect}
\omega_\alpha^\sigma(k)
=\al_1^{-2}k_1^2+\sigma\al_2^{-2}k_2^2
=\frac{1}{q\al_1^2}\bigl(qk_1^2+\sigma pk_2^2\bigr).
\end{equation}
Setting $c=(q\al_1^2)^{-1}$, $\tau=ct$, and
$v(t,x)=c^{1/2}w(\tau,x)$, we obtain
\begin{align}\label{NLS2}
i\pa_\tau w-\Omega_{p,q}^\sigma w=\mu|w|^2w,
\end{align}
where $\Omega_{p,q}^\sigma$ is the Fourier multiplier with
symbol $qk_1^2+\sigma pk_2^2$.
Thus, fixed rescalings of space, time, and amplitude reduce
both equations to a form with integral dispersion coefficients.
In particular, the quadratic irrational tori of
Definition~\ref{def:quadratic irrational} have rationally
commensurate dispersion coefficients.

Motivated by this reduction, we henceforth work on standard square torus $\T^{2}$ and consider
the slightly more general cubic equation
\begin{equation}
\label{equ:cubic NLS}
 \begin{cases}
        i\pa_{t}u -\Omega_{a,b} u
        =\mu |u|^{2}u,\\
        u(0)=u_0,
\end{cases}
\quad x=(x_{1},x_{2})\in\T^{2} =(\R/2\pi\Z)^2,
\end{equation}
where $\mu\in\{\pm1\}$.
Here $\Omega_{a,b}$ denotes the Fourier multiplier with integral
diagonal symbol
\begin{equation}
\label{equ:disp}
        \omega_{a,b}(k)=a k_{1}^{2}+b k_{2}^{2},
        \quad a,b\in\Z\setminus\{0\}.
\end{equation}
The associated linear propagator is
\begin{equation}
\label{equ:flow}
        S_{a,b}(t)=e^{-it\Omega_{a,b}},
        \quad
        \widehat{S_{a,b}(t)\phi}(k)=e^{-it\omega_{a,b}(k)}\wh\phi(k).
\end{equation}
The propagator is $2\pi$-periodic in time because its symbol is
integer-valued. For fixed $a,b$, we write
$\omega,\Omega,S(t)$ for $\omega_{a,b},\Omega_{a,b},S_{a,b}(t)$.
Nonzero rational coefficients are also covered after clearing
denominators and rescaling time and amplitude.

\begin{definition}[Square and nonsquare cases]
\label{def:sq}
For the integral diagonal form $\omega_{a,b}$ in \eqref{equ:disp}, we say
that $(a,b)$ lies in the \emph{square case} if
\begin{align*}
-ab=m^{2}\quad\text{for some }m\in\Z.
\end{align*}
Otherwise, we say that $(a,b)$ lies in the \emph{nonsquare case}.
\end{definition}

The square case occurs precisely for hyperbolic forms with a nonzero
lattice null vector, or equivalently a rational null line. See
Lemma~\ref{lemma:null}. Every elliptic form belongs to the nonsquare case.

\subsubsection{Sharp estimates and nonlinear consequences}
\label{section:Sharp estimates and nonlinear consequences}
We begin with the sharp linear estimates, which exhibit the arithmetic dichotomy introduced in Definition~\ref{def:sq}.
\begin{theorem}[Sharp $L^{4}$ Strichartz dichotomy]\label{thm:sharp,Strichartz,L4}
Let $a,b\in\Z\setminus\{0\}$, and let $S(t)=S_{a,b}(t)$ be given by \eqref{equ:flow}.
Then, for every dyadic $N\ge2$,
\begin{equation}\label{equ:sharp,Strichartz,L4}
\n{S(t)P_{\le N}\phi}_{L^{4}([0,2\pi]\times\T^{2})} \lesssimab \Theta(N)^{1/4}\n{\phi}_{L^{2}(\T^{2})},
\end{equation}
where
\begin{align*}
\Theta(N)=
\begin{cases}
 N, & -ab \text{ is a square},\\
 \log N, & -ab \text{ is not a square}.
\end{cases}
\end{align*}
Both losses are sharp, up to constants depending only on $a$, $b$.
\end{theorem}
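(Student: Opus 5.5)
Expanding the fourth power and using Parseval in space and time (the propagator is $2\pi$-periodic because $\omega$ is integer-valued) gives
\begin{align*}
\n{S(t)P_{\le N}\phi}_{L^4}^4 \sim \sum_{\substack{k_1-k_2+k_3-k_4=0\\ \omega(k_1)-\omega(k_2)+\omega(k_3)-\omega(k_4)=0}} \wh\phi(k_1)\overline{\wh\phi(k_2)}\wh\phi(k_3)\overline{\wh\phi(k_4)},
\end{align*}
with all $|k_j|_\infty\le N$. We parametrize $k_2=k_1+m$ and $k_4=k_1+n$, so that $k_3=k_1+m+n$. The resonance condition then reduces to the bilinear relation $B(m,n):=2(am_1n_1+bm_2n_2)=0$. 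Writing $f=\wh\phi$, the form becomes $\sum_{k}\sum_{B(m,n)=0} f(k)\bar f(k+m) f(k+m+n)\bar f(k+n)$. The plan is to bound this by a Schur-type argument in the pair $(k,k+m+n)$ versus $(k+m,k+n)$. For a fixed sum $s=k_1+k_3$ and fixed difference, the admissible pairs lie on a conic $\{\omega(k)+\omega(s-k)=\text{const}\}$ intersected with the box. After Cauchy--Schwarz, the estimate reduces to a counting problem.

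\textbf{Upper bound, square case.} Cauchy--Schwarz in the bilinear form gives a bound by $\sup_{s,c}\#\{k\in[-N,N]^2:\ \omega(k)+\omega(s-k)=c\}$. Completing the square, this is the number of lattice points with $|\cdot|\lesssim N$ on $\{2a x_1^2+2b x_2^2=c'\}$ (or on a translate, after handling parity). Trivially this is at most $O(N)$, since each $x_1$ gives at most two values of $x_2$. This yields $\|\cdot\|_{L^4}^4\lesssim N\|\phi\|_2^4$, which is the claim. The bound in fact holds for all $(a,b)$.

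\textbf{Upper bound, nonsquare case.} Here a crude sup count fails, because hyperbolic conics can carry many points. Instead I would follow the Herr--Kwak strategy for \eqref{equ:Strichartz}: decompose dyadically by the size of $|m|\sim M$ and $|n|\sim M'$, and use $B(m,n)=0$. In the nonsquare case $am_1^2+bm_2^2\neq0$ for $m\ne0$, so for fixed $m$ the allowed $n$ form a rank-one lattice $n=t\,m^\perp_B/g$ with $m^\perp_B=(bm_2,-am_1)$ and $g=\gcd$. The key estimate should be an almost-orthogonality bound in which each dyadic pair of scales contributes $O(1)\|f\|_2^4$ only when $M\sim M'$ up to bounded factors. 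This is where nonsquareness enters, as a lower bound $|am_1^2+bm_2^2|\ge1$, i.e. the quadratic form stays quantitatively transverse to null directions. Summing over $\log N$ dyadic scales then gives $\log N$.

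Concretely, I would try to reduce to the estimate $\sum_{m}\big|\sum_k f(k)\bar f(k+m)\,\cdots\big|$ via a Fourier-analytic decomposition into parallelogram tiles adapted to $m$ and $m^\perp_B$, mimicking the square-torus argument. I expect this to be the main obstacle. In the hyperbolic nonsquare case the conics are hyperbolas, and the counting of divisor-type sums must be replaced by Pell-type counting: the number of $m$ with $|m|\le N$ and $am_1^2+bm_2^2=h$ grows like $\log N$ times a divisor function. One must therefore verify that the unit-group orbits cost only one logarithm overall rather than one per scale. I would first try to prove the bound $\#\{(m,n):|m|,|n|\le N,\ B(m,n)=0,\ m,n\neq0\}$ weighted by $1/(|m||n|)\lesssim \log N$, and feed it into a Schur test.

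\textbf{Sharpness.} In the square case, take $\wh\phi=\ind$ on $\{k: |k|\le N,\ \omega(k)=0\}$, a null line with $\sim N$ points. Then $S(t)\phi=\phi$, and $\|\phi\|_{L^4(\T^2)}^4$ counts additive quadruples on a line, which is $\sim N^3=N\|\phi\|_2^4$. In the nonsquare case, take $\wh\phi=\ind_{[-N,N]^2}$ (the box example). The number of solutions to $k_1+k_3=k_2+k_4$, $\omega(k_1)+\omega(k_3)=\omega(k_2)+\omega(k_4)$ in the box is $\gtrsim N^4\log N$, by counting pairs $(m,n)$ with $B(m,n)=0$: for each $m$ there are $\sim N/|m^\perp_B/g|$ choices of $n$, which sums to $\sim N^2\log N$ with $N^2$ base points. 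Dividing by $\|\phi\|_2^4\sim N^4$ gives $\log N$.
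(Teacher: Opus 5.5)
Your square-case upper bound is correct, and it is more elementary than the paper's route. Cauchy--Schwarz on the space-time Fourier coefficients of $u^2$ gives $\n{S(t)P_{\le N}\phi}_{L^4([0,2\pi]\times\T^2)}^4\lesssim\sup_{s,c}\#\{k\in A_N:\omega(k)+\omega(s-k)=c\}\,\n{\phi}_{L^2}^4$. For fixed $k_1$ the condition is a quadratic in $k_2$ with leading coefficient $2b\ne0$, so the count is $O(N)$ for every $(a,b)$. The paper instead invokes Wang plus the Guo--Oh--Wang covering, or sums the lossless bound on intervals of length $cN^{-1}$ obtained from Lemma~\ref{lemma:semi}. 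Your two lower-bound examples are the paper's own, Lemmas~\ref{lemma:box} and~\ref{lemma:null-lb}.

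The genuine gap is the nonsquare upper bound, i.e.\ $\sum_{Q\in\cQ_0^{a,b}}f(Q)\lesssimab\log N\,\n{f}_{\ell^2}^4$ for $f\ge0$ supported in $A_N$. This is the real content of the theorem, and you only announce it. Moreover, the mechanisms you sketch would fail.

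\begin{itemize}
\item The logarithm is not a sum of $O(1)$ contributions over dyadic side lengths. For $f=\ind_{A_N}$, the $B_{a,b}$-orthogonal pairs with $\abs{m},\abs{n}\in[N/8,N/4]$ already number $\gtrsimab N^2\log N$: parametrize by the primitive direction with $\max(p,q)=r$ and sum $\varphi(r)N^2/r^2$. So this single diagonal pair of scales contributes $\gtrsimab\log N\,\n{f}_{\ell^2}^4$.
\item Off-diagonal scales are not negligible either. For $f=\ind_{[0,K]\times[0,N]}$ with $K\ll N$, the pair of scales $(K,N)$ contributes $\gtrsim\n{f}_{\ell^2}^4$ through the coordinate directions, which are $B_{a,b}$-orthogonal for every diagonal form.
\item The weighted count you propose is false. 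Already for $a=b=1$, the sum of $1/(\abs{m}\abs{n})$ over orthogonal pairs in $A_N$ is comparable to $\sum_{p\ \mathrm{primitive}}\log^2(N/\abs{p})/\abs{p}^2\sim(\log N)^3$.
\item More fundamentally, rectangles with a prescribed diagonal correspond to lattice points on a conic, whose number has divisor-type growth. So any uniform Schur or counting bound loses more than $\log N$, and Pell-type orbit counting does not change this. One must exploit the structure of $f$ itself.
\end{itemize}

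What is missing is the Herr--Kwak incidence argument transplanted to $B_{a,b}$-rectangles. Its steps are as follows.

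\begin{enumerate}
\item Split $A_N$ into rank level sets $S_j$ of $f$ with $\#S_j\le 2^j$, $j\le m\sim\log N$.
\item Classify the $(a,b)$-cross at each vertex as rich, medium or poor, according to how many points of its level set its side lines carry. Szemer\'edi--Trotter bounds the number of such lines.
\item Remove the points lying on two rich lines. They carry only an $O(2^{-2C})$ fraction of the $\ell^2$ mass, so the removal iterates with geometric decay.
\item Count rectangles type by type (Lemmas~\ref{lemma:corner}--\ref{lemma:divisor side} and~\ref{lemma:type,est}). Only medium--medium rectangles lose, and they lose exactly one factor $m$, from the single free richness parameter.
\end{enumerate}

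The arithmetic fact you isolate, $B_{a,b}(v,v)\ne0$, is the right one, but it is not used as a transversality bound in an orthogonality argument. It makes $\xi$ and $\xi^{\perp_{a,b}}$ independent. At a rich vertex $x$, whose rich line (direction $v$) is unique after the removal step, it also lets the opposite vertex $z$ determine the rectangle through $t=B_{a,b}(v,z-x)/B_{a,b}(v,v)$. Hence rectangles with a rich vertex cost only $\n{f}_{\ell^2}^4$.

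Since the modulation is integral, on $[0,2\pi]$ only the $\tau=0$ count \eqref{equ:dy0} is needed. The divisor-weighted estimates are required only for the short-time theorem.
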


The fixed-time bounds in \eqref{equ:sharp,Strichartz,L4} follow by summing the following short-time estimates.
\begin{theorem}[Sharp short-time $L^{4}$ Strichartz estimates]\label{thm:sharp,short-time,L4}
Let $a,b\in\Z\setminus\{0\}$, and let $S(t)=S_{a,b}(t)$ be given by \eqref{equ:flow}.
There exists $c=c(a,b)>0$ such that, for every dyadic $N\ge2$,
\begin{equation}\label{equ:sharp,short-time,L4}
\n{S(t)P_{\le N}\phi}_{L^{4}([0,T_{N}]\times\T^{2})}\lesssimab \n{\phi}_{L^{2}(\T^{2})}.
\end{equation}
Here
\begin{align*}
T_{N}=
\begin{cases}
 cN^{-1}, & -ab \text{ is a square},\\
 c(\log N)^{-1}, & -ab \text{ is not a square}.
\end{cases}
\end{align*}
\end{theorem}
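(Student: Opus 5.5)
We follow the Herr--Kwak strategy: write the $L^4$ norm as an $L^2$ norm of a square, $\n{S(t)P_{\le N}\phi}_{L^4}^2=\n{(S(t)P_{\le N}\phi)^2}_{L^2}$, and reduce to a lattice-counting problem with a time cutoff. Concretely, let $\eta$ be a smooth bump with $\ind_{[0,1]}\le\eta$ and $\widehat\eta$ compactly supported. Put $\eta_T(t)=\eta(t/T)$. By Plancherel in $x$ and $t$,
\begin{align*}
\n{\eta_T^{1/2}S(t)P_{\le N}\phi}_{L^4}^4
= \sum_{\xi\in\Z^2}\int \Bigl|\sum_{k_1+k_2=\xi}\widehat\phi(k_1)\widehat\phi(k_2)\,
\widehat{\eta_T}^{1/2\ast}\bigl(\tau-\omega(k_1)-\omega(k_2)\bigr)\Bigr|^2 d\tau .
\end{align*}
A cleaner route is to expand the fourth power directly: the left side equals $\sum \widehat{\eta_T}(\omega(k_1)+\omega(k_2)-\omega(k_3)-\omega(k_4))\,\widehat\phi(k_1)\widehat\phi(k_2)\overline{\widehat\phi(k_3)\widehat\phi(k_4)}$ over $k_1+k_2=k_3+k_4$. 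Here $|\widehat{\eta_T}(\lambda)|\lesssim T\langle T\lambda\rangle^{-10}$, so only resonances $|\Delta\omega|\lesssim T^{-1}$ matter.

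Parametrize $k_3=k_1+m$, $k_4=k_2-m$. Then, using that $\omega$ is a quadratic form with bilinear form $B$,
\[
\Delta\omega=-2B(m,k_1-k_2+m)=-2B(m,n),\qquad n:=k_1-k_2+m .
\]
By Schur's test (Cauchy--Schwarz in the pair), the estimate reduces to bounding
\[
\sup_{\xi}\sum_{n\in\Z^2,\ |n|\lesssim N}\ \#\{m: |m|\lesssim N,\ |B(m,n)|\lesssim T^{-1}\}\,\cdot\,(\text{decay weight})
\]
suitably normalized. The target is $T\cdot\sup_{k_1}\sum_{m}\langle T B(m,n)\rangle^{-10}\lesssim_{a,b}1$ when $T=T_N$, with $n$ determined by $k_1,m$ and $\xi$. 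Writing $B(m,n)=a m_1n_1+b m_2n_2$ and noting $n=2k_1-\xi+m$, we get $B(m,n)=\omega(m)+B(m,2k_1-\xi)$. For fixed $\xi$ and $k_1$, the count is therefore the number of $m\in[-2N,2N]^2$ for which the quadratic polynomial $\omega(m)+\ell(m)$ lies in an interval of length $\sim T^{-1}$. After completing squares (denominators are controlled by $a,b$), this becomes the count of lattice points $m'$ in a box of side $\sim N$ with $\omega(m')\in[c_0,c_0+T^{-1}]$, uniformly in $c_0$. We therefore need
\begin{align*}
\sup_{c_0}\#\{m'\in\Z^2\cap[-CN,CN]^2:\ |\omega(m')-c_0|\le T^{-1}\}\lesssim_{a,b} T^{-1}\cdot\Theta(N)\quad\text{with } T^{-1}\Theta(N)^{-1}\sim 1,
\end{align*}
that is, the average number of representations of an integer by $\omega$ in the box must be $\lesssim\Theta(N)$.

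This representation count is the main obstacle. For a single value $c_0=r\ne0$, the number of representations $r=am_1^2+bm_2^2$ with $|m|\lesssim N$ is governed by divisor-type bounds in the ring of integers of $\Q(\sqrt{-ab})$. In the elliptic case it is $\lesssim N^{\epsilon}$, but that is too crude; we need an average. The plan is to bound instead $\sum_{|r-c_0|\le T^{-1}} R_N(r)$. In the nonsquare case I would factor through norms in $\Q(\sqrt{-ab})$: $R_N(r)\lesssim_{a,b}\sum_{d\mid r}\chi(d)$ for the quadratic character $\chi$, plus a unit contribution when $-ab<0$ fails, i.e. the real-quadratic case. There units give only $O(\log N)$ solutions in the box per ideal, since the fundamental unit has size $>1$ depending on $a,b$. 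Summing $\sum_{d\mid r}\chi(d)$ over an interval of $r$ of length $L=T^{-1}\sim\log N$ gives $\lesssim L\cdot L(1,\chi)+$ an error. The $r=0$ case is trivial here, since there are no null vectors. The unit factor $\log N$ in the real case is exactly compensated by $T\sim(\log N)^{-1}$ only if the interval-average of $\sum_{d\mid r}\chi(d)$ is $O(1)$; this requires care with the error term for short intervals, and I would handle it by splitting $d\le\sqrt{L}$ versus large complementary divisors. In the elliptic case the log instead comes from divisor averaging, as in Herr--Kwak.

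In the square case, $\omega$ factors over $\Z$ as a product of two rational linear forms $\ell_1\ell_2$ up to a constant depending on $a,b$. For $r\ne0$ the count is $\lesssim d(r)$, which must again be averaged. For $r=0$, however, the null lines carry $\sim N$ points. The box of length $T^{-1}\sim N$ then gives total count $\lesssim N$, by standard averaging of $d(r)$ over $|r|\lesssim N^2$ on an interval of length $N$, plus $O(N)$ from $r=0$, which is consistent with $T^{-1}\cdot 1$. Finally I would assemble the pieces via Schur's test and remove the smooth cutoff, giving \eqref{equ:sharp,short-time,L4}.
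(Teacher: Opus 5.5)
There is a genuine gap: the Schur-test reduction is too lossy, because the lattice count it needs is false in every case. Correctly normalized, Schur's test requires $T\sup_{\xi,k_1}\#\{k_3\}\lesssim_{a,b}1$, that is,
\[
\sup_{c_0}\#\{m'\in\Z^2\cap[-CN,CN]^2:\ |\omega(m')-c_0|\le CT^{-1}\}\lesssim_{a,b}T^{-1},
\]
an average of $O(1)$ representations per integer. Your displayed target $T^{-1}\Theta(N)$ would only give $T\cdot\#\lesssim\Theta(N)$, which is a loss. Since $c_0=\omega(2k_1-\xi)$, the supremum includes $c_0=0$ and every value $\omega(k)$ with $k\in A_N$.

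\emph{Elliptic nonsquare case.} For $a=b=1$, take $\xi=0$ and $|k_1|^2=n\le N^2$ with $r_2(n)\ge\exp(c\log N/\log\log N)$. A single integer then already exceeds the whole budget $T^{-1}\sim\log N$. No averaging of $\sum_{d\mid r}\chi(d)$ over a window of length $\log N$ near $N^2$ can repair this.

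\emph{Hyperbolic nonsquare case.} Take, e.g., $(a,b)=(1,-2)$, $\xi=k_1=0$. Every represented $r$ with $0<|r|\le L$ has $\sim\log N$ representations in the box, coming from the unit group. Hence the count is $\sim L\log N$; the region $\{|\omega|\le L\}\cap[-N,N]^2$ has area $\sim L\log(N^2/L)$. With $L=T^{-1}\sim\log N$ you get $(\log N)^2$ points, and $T$ cancels only one of the two logarithms.

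\emph{Square case.} For $\omega=m_1^2-m_2^2$ at $c_0=0$, the count is $\gtrsim\sum_{0<r\le N}d(r)\sim N\log N$, not $O(N)$, since $d(r)$ averages $\log N$ near $0$. So again $T\cdot\#\sim\log N$ at $T=cN^{-1}$. This is the lattice version of the logarithmic divergence of the self-convolution of the hyperbolic paraboloid measure: a frequency-side Cauchy--Schwarz argument loses a logarithm in hyperbolic signature.

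The missing idea is that the estimate holds because $\ell^2$ data cannot sit on all the bad rows at once, and a worst-row bound cannot see this. Herr--Kwak's argument is not divisor averaging; it is designed precisely to bypass worst-case representation numbers.

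\emph{How the paper handles the nonsquare case.}
\begin{enumerate}
\item[(i)] Decompose $\widehat{P_{\le N}\phi}$ into dyadic level sets $S_j$.
\item[(ii)] Using Szemer\'edi--Trotter, iteratively remove the points lying on two rich lines.
\item[(iii)] Count $B_{a,b}$-rectangles by the rich/medium/poor type of their vertex crosses. This uses that $\{\eta:B_{a,b}(\eta,\xi)=0\}=\Z\xi^{\perp_{a,b}}$, with $\xi$ and $\xi^{\perp_{a,b}}$ independent when $-ab$ is not a square.
\item[(iv)] At $\tau=0$, only the medium--medium class produces a factor $m\sim\log N$, which $T=c/m$ absorbs.
\item[(v)] For $\tau\ne0$, the constraint $d_{a,b}(\xi_1-\xi_4)\mid\tau$ supplies the weight $1/d_{a,b}(\xi_1-\xi_4)$. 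This weight removes that logarithm after the $\min\{T,(T\tau^2)^{-1}\}$ summation.
\end{enumerate}

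\emph{How the paper handles the square case.} It avoids counting entirely. Poisson summation and stationary phase give $|K_N(t,x)|\lesssim|t|^{-1}$ for $|t|\le cN^{-1}$, since only $O(1)$ periodic images have stationary points on that time scale. Then $TT^*$ and one-dimensional Hardy--Littlewood--Sobolev give the lossless $L^4$ bound on every interval of length $cN^{-1}$.
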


\begin{remark} \rm
The fixed-time estimate \eqref{equ:sharp,Strichartz,L4} for $(a,b)=(1,-1)$ is due to Wang \cite[Theorem 2.1]{Wang13}, while both \eqref{equ:sharp,Strichartz,L4} and \eqref{equ:sharp,short-time,L4} for $(a,b)=(1,1)$ are due to Herr--Kwak\cite{HK24}. In the square case, i.e. $-ab$ is a square, the fixed-time
estimate \eqref{equ:sharp,Strichartz,L4} for general $(a,b)$ can be obtained by the rational-torus covering
argument of Guo--Oh--Wang \cite[Subsection 1.2]{GOW14}. 
We prove the sharp estimates of \eqref{equ:sharp,Strichartz,L4} and
\eqref{equ:sharp,short-time,L4} in the remaining cases in
Section~\ref{section:Sharp Strichartz}. The main extension
concerns nonsquare integral forms, particularly those of
hyperbolic signature.
\end{remark}

Both time scales $T_N$ in \eqref{equ:sharp,short-time,L4} are optimal up to constants. Indeed, the null-line example
in Lemma~\ref{lemma:null-lb} forces $T_{N}\lesssim N^{-1}$ in the square
case, while Lemma~\ref{lemma:box} and a time-partition argument force
$T_{N}\lesssim(\log N)^{-1}$ in the nonsquare case.

\medskip

In the nonsquare case, the logarithmic time scale permits the
Herr--Kwak iteration \cite{HK24} for the cubic nonlinearity and gives
the following small-mass global theory.

\begin{theorem}[Small-mass global well-posedness]\label{thm:gwp}
Assume that $-ab$ is not a square. For every $s>0$, there exists $\delta=\delta(s,a,b)>0$ such that, for every $u_0\in H^{s}(\T^{2})$ with
\begin{align*}
 \n{u_0}_{L^{2}(\T^{2})}\le\delta,
\end{align*}
the cubic equation \eqref{equ:cubic NLS} has a solution
\begin{align*}
 u\in C(\R;H^{s}(\T^{2}))\cap Y^{s}_{\mathrm{loc}}(\R).
\end{align*}
Here, $Y^{s}_{\mathrm{loc}}$ denotes the adapted restriction space defined in
Subsection $\ref{section:Y spaces}$. The solution is unique in this class and conserves mass. For every $T>0$, the solution map into $C([-T,T];H^{s}(\T^{2}))$ is Lipschitz on bounded $H^{s}$-sets of initial data satisfying the stated mass bound.
\end{theorem}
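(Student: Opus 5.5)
We follow the Herr--Kwak scheme \cite{HK24} and treat the short-time estimate of Theorem~\ref{thm:sharp,short-time,L4} in the nonsquare case as the main linear input. First we use it to prove a trilinear estimate in the adapted spaces. Let $I$ be an interval of length $|I|\le T_N=c(\log N)^{-1}$. For $u_j$ frequency-localized at scale $\le N$,
\begin{align*}
\Bigl|\int_I\int_{\T^2} u_1\overline{u_2}u_3\overline{u_4}\,dx\,dt\Bigr|\lesssim_{a,b}\prod_{j=1}^4\n{u_j}_{Y^0(I)}.
\end{align*}
This follows from \eqref{equ:sharp,short-time,L4} by the transference principle: pass first to $U^4$ atoms, then to $Y^0\subset U^2$-type spaces. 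The Strichartz estimate only needs to hold on a time interval of length $\sim(\log N)^{-1}$ at frequency $N$, uniformly in the starting time. This uniformity holds because $S(t)$ is a unitary group.

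Next we get the $H^s$ multilinear estimate with derivatives. We decompose each $u_j$ dyadically, $u_j=\sum_{N_j}P_{N_j}u_j$, and order the frequencies so that $N_1\ge N_2\ge\dots$. The two highest frequencies are comparable. We put the $H^s$ weight on them and the $L^2$ (mass) norm on the remaining two. The key point is that the time scale $(\log N)^{-1}$ costs only a logarithm. To cover a unit time interval we need about $\log N_1$ short intervals. Gluing these in the $Y$ spaces gives a factor $\log N_1$, and this factor is absorbed by $N_1^{s}$ versus $N_1^{s'}$ for any $0<s'<s$. Summing over dyadic blocks, we obtain
\begin{align*}
\n{\int_0^t S(t-t')(|u|^2u)\,dt'}_{Y^s([0,1])}\lesssim \n{u}_{Y^0([0,1])}^{2}\,\n{u}_{Y^{s}([0,1])},
\end{align*}
with an implicit constant depending on $s,a,b$. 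The point of this estimate is that only the mass norm appears quadratically. Here we follow Herr--Kwak exactly. The only arithmetic they use is \eqref{equ:sharp,short-time,L4}, so their orthogonality and counting arguments carry over unchanged once the $L^4$ bound is available. We check that their bilinear/orthogonality lemmas used only this $L^4$ bound. Where they used lattice counting specific to $k_1^2+k_2^2$, we replace it with the corresponding bound for $ak_1^2+bk_2^2$. That bound is obtainable from the proof of Theorem~\ref{thm:sharp,short-time,L4}.

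With the estimate in hand, a contraction argument in $Y^s([0,1])$ gives local existence on a unit time interval. Here we use the smallness of $\n{u_0}_{L^2}\le\delta$, not of the $H^s$ norm: the quadratic factor $\n{u}_{Y^0}^2\lesssim\delta^2$ makes the map a contraction in a ball of radius $2C\n{u_0}_{H^s}$. The $L^2$ norm is controlled by the same argument with $s=0$ (with loss) or by mass conservation. Since the time of existence depends only on the mass, which is conserved, we iterate on $[n,n+1]$ to get a global solution in $Y^s_{\mathrm{loc}}$. The $H^s$ norm may grow, but only by a factor of at most $e^{Cn}$, which is harmless. Uniqueness in the class, mass conservation (by approximation with smooth data) and Lipschitz dependence on bounded sets follow from the difference estimate, which is multilinear in the same way.

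The main obstacle is the logarithmic summation step. We must ensure that the number of short intervals, about $\log N$, together with the interval-gluing losses in $U^2/V^2$, costs only $(\log N)^{C}$, so that $N^{s}$ absorbs it for every $s>0$. We must also ensure that the high-high-to-low interactions still sum. Both points rely on the sharpness of $T_N\sim(\log N)^{-1}$, which we take from Theorem~\ref{thm:sharp,short-time,L4}.
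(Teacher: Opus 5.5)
There is a genuine gap at the central step. The unit-time estimate
\[
\n{\mathcal I_{0}(|u|^{2}u)}_{Y^{s}([0,1])}\lesssim\n{u}_{Y^{0}([0,1])}^{2}\n{u}_{Y^{s}([0,1])}
\]
is false, and with it the claim that the existence time depends only on the mass.

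To see this, take $u=S(t)\phi$ and pair $\mathcal I_{0}(|u|^{2}u)(1)$ with $S(1)\phi$. By unitarity this pairing equals $\n{S(t)\phi}_{L^{4}([0,1]\times\T^{2})}^{4}$. Your estimate together with Lemma~\ref{lemma:Y space} would then give
\[
\n{S(t)\phi}_{L^{4}([0,1]\times\T^{2})}^{4}\lesssim\n{\phi}_{L^{2}}^{2}\n{\phi}_{H^{s}}\n{\phi}_{H^{-s}}\sim\n{\phi}_{L^{2}}^{4},
\]
uniformly in $N$, for every $\phi$ with Fourier support in $\{|k|\sim N\}$. This contradicts the sharpness of the $(\log N)^{1/4}$ loss. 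Apply Lemma~\ref{lemma:box} to the cube $k_{0}+A_{N/8}$ with $k_{0}=(3N/4,0)$, using Lemma~\ref{lemma:freq}. Then time-translate to a unit subinterval of $[0,2\pi]$ that carries a fixed fraction of the $L^{4}$ norm.

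The absorption you invoke ("$\log N_{1}$ absorbed by $N_{1}^{s}$ versus $N_{1}^{s'}$") is unavailable exactly in the high$\times$high$\times$high$\to$high interaction at comparable frequencies. There the weight on the output cancels the weight on the top input, and the two remaining factors carry only the mass, so no power of $N$ is left over. Creating one would require putting $H^{\ve}$ on those factors, in which case small mass no longer suffices. Alternatively you would have to lose regularity in the output, and then the iteration does not close. Your contraction also needs a $Y^{0}$-level difference estimate, since the cross term $\n{u-v}_{Y^{0}}\n{u}_{Y^{0}}\n{u}_{Y^{s}}$ is not small when $\n{u_{0}}_{H^{s}}$ is large. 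That $Y^{0}$ estimate fails on unit intervals for the same reason.

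What is missing is the logarithmic-time-scale iteration of Herr--Kwak, which the paper invokes through Lemma~\ref{lemma:tri}.
\begin{enumerate}
\item Given data with $\n{u(t_{0})}_{L^{2}}\le\delta$, choose $N$ with $N^{-s}\n{u(t_{0})}_{H^{s}}\sim\delta$. The data are then small in $\n{\cdot}_{L^{2}}+N^{-s}\n{\cdot}_{H^{s}}$, and you contract in $Z_{N}$ on an interval of length $c(\log N)^{-1}$.
\item On that interval, frequencies $\lesssim N$ incur no loss at all. At frequencies $M\gg N$, the loss $(\log M/\log N)^{1/4}$ from \eqref{equ:Y4-long} is beaten by $\n{P_{M}u}_{Y^{0}}\le(N/M)^{s}\n{u}_{Z_{N}}$. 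This is the only place where a logarithm is absorbed by a Sobolev weight.
\item The output bound $N^{-s}\n{u}_{L^{\infty}H^{s}}\le N^{-s}\n{u(t_{0})}_{H^{s}}+C\delta^{3}$ shows that one step multiplies $\n{u}_{H^{s}}$ by at most $1+O(\delta^{2})$. Mass conservation then lets you restart with $N$ re-chosen from the current $H^{s}$ norm.
\item The step length is $\sim s\bigl(\log(\n{u(t)}_{H^{s}}/\delta)\bigr)^{-1}$. So the existence time does depend on the $H^{s}$ norm, but only logarithmically. A discrete version of $\frac{d}{dt}\log\n{u}_{H^{s}}\lesssim\delta^{2}s^{-1}\log(\n{u}_{H^{s}}/\delta)$ gives a double-exponential bound rather than $e^{Cn}$. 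This bound is finite on bounded intervals, which yields global existence.
\item Uniqueness and Lipschitz dependence follow step by step from the same $Z_{N}$ difference estimate.
\end{enumerate}
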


Undoing the normalization in \eqref{equ:rect} gives the corresponding geometric
consequence for quadratic irrational rectangular tori.
\begin{corollary}[Quadratic irrational rectangular tori]\label{cor:quadratic,irrational,tori}
Let $\T^2_{\al_1,\al_2}$ be a quadratic irrational rectangular
torus in the sense of Definition~\ref{def:quadratic irrational}.
Then, for every $s>0$, both the cubic elliptic equation
\eqref{equ:ENLS} and the cubic hyperbolic equation \eqref{equ:HNLS} on
$\T^{2}_{\al_{1},\al_{2}}$ are globally well-posed in $H^{s}$ for
initial data with sufficiently small $L^{2}$-norm.
\end{corollary}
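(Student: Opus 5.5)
The plan is to reduce Corollary~\ref{cor:quadratic,irrational,tori} to Theorem~\ref{thm:gwp} through the normalization of Subsection~\ref{section:From rectangular tori}. Write $(\al_1/\al_2)^2=p/q$ with $(p,q)=1$ and $pq$ not a square, as in Definition~\ref{def:quadratic irrational}. The elliptic equation \eqref{equ:ENLS} becomes \eqref{NLS2} with $\sigma=1$, and the hyperbolic equation \eqref{equ:HNLS} becomes \eqref{NLS2} with $\sigma=-1$. In the notation of \eqref{equ:cubic NLS} these correspond to $(a,b)=(q,p)$ and $(a,b)=(q,-p)$. In \eqref{NLS2} the equation reads $i\pa_\tau w-\Omega w=\mu|w|^2w$, which is exactly \eqref{equ:cubic NLS}. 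I would double-check the sign convention $i\pa_t u+\Delta u$ versus $i\pa_t u-\Omega u$ in \eqref{equ:rect}. If the signs are reversed, one passes to $\bar u$ or replaces $\tau$ by $-\tau$; this leaves the class of equations \eqref{equ:cubic NLS} invariant, changing at most $(a,b)\mapsto(-a,-b)$ and possibly $\mu$, and it preserves whether $-ab$ is a square.

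The key arithmetic check is that $-ab$ is not a square in either case. In the elliptic case $-ab=-pq<0$, which is never a square. In the hyperbolic case $-ab=pq$, which is not a square by the quadratic irrational assumption. Theorem~\ref{thm:gwp} therefore applies to both normalized equations, giving a threshold $\delta(s,a,b)$ for $\n{w_0}_{L^2}$.

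Next I transfer the smallness and the well-posedness back to the original variables. The map $u\mapsto w$ is given by
\begin{align*}
w(\tau,x)=c^{-1/2}u(\tau/c,\al_1x_1,\al_2x_2),\qquad c=(q\al_1^2)^{-1}.
\end{align*}
This is a fixed linear change of space variables, a fixed time dilation and a fixed amplitude scaling. As a result:
\begin{itemize}
\item $\n{w_0}_{L^2(\T^2)}=C(\al)\n{u_0}_{L^2(\T^2_{\al_1,\al_2})}$;
\item $H^s$ norms are equivalent up to constants depending on $\al,s$;
\item continuity in time and Lipschitz dependence on data on bounded sets are preserved, with time intervals rescaled by $c$.
\end{itemize}
Choosing $\delta'=\delta(s,a,b)/C(\al)$ then gives global existence, uniqueness in the transported class, and continuous dependence for \eqref{equ:ENLS} and \eqref{equ:HNLS}.

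There is no real obstacle. The only points needing care are the sign and conjugation bookkeeping above and the observation that uniqueness holds in the pulled-back $Y^s_{\mathrm{loc}}$ class, which is how well-posedness should be understood here.
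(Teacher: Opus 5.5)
Your proposal is correct and follows essentially the same route as the paper: normalize via \eqref{equ:rect} to $(a,b)=(q,p)$ or $(q,-p)$, check that $-ab\in\{-pq,\,pq\}$ is not a square, apply Theorem~\ref{thm:gwp}, and undo the fixed spatial, time and amplitude rescalings. The sign issue you flag does not arise, because \eqref{equ:NLS,v} already has the form $i\pa_t v-\Omega v=\mu|v|^2v$, so no conjugation or time reversal is needed.
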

\begin{proof}
We write $(\al_{1}/\al_{2})^{2}=p/q$ in lowest terms. By \eqref{equ:rect}, the normalized coefficients are $a=q$ and $b=p$ in the elliptic case, so $-ab=-pq<0$ is not a square.
In the hyperbolic case, $a=q$ and $b=-p$, and the irrationality of $\al_{1}/\al_{2}$ implies that $-ab=pq$ is not a square. Thus, Theorem~\ref{thm:gwp} applies in both cases. Undoing the fixed
spatial, time, and amplitude changes proves the claim.
\end{proof}

\begin{remark}\rm
\label{rem:novelty}
To our knowledge, Corollary~\ref{cor:quadratic,irrational,tori} provides the first
small-mass global well-posedness result for the cubic hyperbolic NLS
in the fully periodic two-dimensional setting. It holds on quadratic
irrational rectangular tori in every $H^{s}$, $s>0$.
In the semi-periodic setting, Deng--Fan--Zhao
\cite[Theorems 1.1 and 1.3]{DFZ25} recently proved a lossless $L^{4}$
Strichartz estimate on $[0,1]\times\R\times\T$ and small-data global
well-posedness for the cubic hyperbolic NLS in $L^{2}(\R\times\T)$.

The elliptic conclusion is also new for this family, extending
the result of Herr--Kwak \cite{HK24} from the standard square
torus to quadratic irrational rectangular tori.
\end{remark}

\subsubsection{Domain sensitivity and endpoint regularity}\label{section:Domain sensitivity} 
We first explain how the standard square-torus theory applies to every form in the square case, i.e. $-ab$ is a square number. Guo--Oh--Wang \cite[Subsection 1.2]{GOW14} periodically extend functions from a rational rectangular torus to a
larger square torus, then transfer estimates by scaling.
The same construction applies to the hyperbolic operator. After interchanging coordinates if necessary, we write
\begin{align*}
 a=dr_{1}^{2},\quad b=-dr_{2}^{2},\quad d=\gcd(|a|,|b|),\quad r_{1},r_{2}\in\N,
\end{align*}
as permitted by $-ab$ being a square. The map
\begin{align*}
 V(\tau,x)=d^{-1/2}u(\tau/d,r_{1}x_{1},r_{2}x_{2})
\end{align*}
transforms \eqref{equ:cubic NLS} into the standard equation $i\pa_{\tau} V+\Box V=\mu|V|^{2}V$.
The additional spatial periods of $V$ are preserved by uniqueness, so the square-torus local theory pulls back to $u$. Conversely, any standard square-torus solution $V$ gives a solution
of \eqref{equ:cubic NLS} through
\begin{align*}
u(t,x)=\lambda^{1/2}V(\lambda t,r_{2}x_{1},r_{1}x_{2}),\quad \lambda=dr_{1}^{2}r_{2}^{2}.
\end{align*}
These fixed maps compare Sobolev norms up to constants depending on $a,b,s$. The reverse embedding therefore transfers the ill-posedness examples as well.

Consequently, throughout the square case, i.e.\ $-ab$ is a square, Wang's local well-posedness result \cite{Wang13} holds for $s>\frac12$, and the norm inflation of Shen--Wang \cite{ShenWang} holds for
$s\in(-\infty,0)\cup(0,\frac12]$. At $s=0$, mass conservation precludes norm inflation, although Wang's failure of $C^3$ regularity of the solution map still applies. In the nonsquare case, Theorem~\ref{thm:gwp} gives small-mass global well-posedness for every $s>0$. Thus, for $0<s\le\frac12$, small-mass global well-posedness in the nonsquare case contrasts with norm inflation in the square case.

These conclusions yield a dichotomy on two dense classes of rectangular tori.
\begin{corollary}[Dense arithmetic dichotomy]\label{cor:domain}
Fix $0<s\le\frac12$. The cubic hyperbolic equation \eqref{equ:HNLS} on $\T^{2}_{\rho,1}$ is globally well-posed in $H^{s}$ for data with sufficiently small $L^{2}$-norm whenever $\rho^{2}\in\Q$ and $\rho\notin\Q$, whereas it exhibits norm inflation at the origin in $H^{s}$ whenever $\rho\in\Q_{>0}$.
Both classes of aspect ratios are dense in $(0,\infty)$. Consequently, every neighborhood of every positive aspect ratio contains tori exhibiting each of these two behaviors.
\end{corollary}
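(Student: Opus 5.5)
The plan is to treat the two classes of aspect ratios separately, apply the already-established results to each, and then check density.

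\emph{Quadratic irrational aspect ratios.} Let $\rho^{2}\in\Q$ with $\rho\notin\Q$, and take $\al_{1}=\rho$, $\al_{2}=1$. Then $\T^{2}_{\rho,1}$ is quadratic irrational in the sense of Definition~\ref{def:quadratic irrational}. Corollary~\ref{cor:quadratic,irrational,tori} therefore gives small-mass global well-posedness of \eqref{equ:HNLS} in $H^{s}$ for every $s>0$, and in particular for $0<s\le\frac12$. Concretely, write $\rho^{2}=p/q$ in lowest terms. The normalization \eqref{equ:rect} produces $(a,b)=(q,-p)$. Here $-ab=pq$ is not a square, since otherwise coprimality would force both $p$ and $q$ to be squares, making $\rho$ rational. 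So Theorem~\ref{thm:gwp} applies.

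\emph{Rational aspect ratios.} Let $\rho=r/s'\in\Q_{>0}$. Then $\rho^{2}=p/q$ with $p,q$ both squares, so the normalized form is $(a,b)=(q,-p)$ and $-ab=pq$ is a perfect square: this is the square case. Now use the transfer in Subsection~\ref{section:Domain sensitivity}. Write $a=dr_{1}^{2}$ and $b=-dr_{2}^{2}$. The map $u(t,x)=\lambda^{1/2}V(\lambda t,r_{2}x_{1},r_{1}x_{2})$ sends square-torus solutions of $i\pa_{\tau}V+\Box V=\mu|V|^{2}V$ to solutions of \eqref{equ:cubic NLS}. Because this map and its inverse on the relevant periodic subspaces distort $H^{s}$ norms only by constants depending on $a,b,s$, the Shen--Wang norm inflation examples for $0<s\le\frac12$ \cite{ShenWang} transfer directly.

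Explicitly, for each $\ve>0$ there is $V_{0}$ with $\n{V_{0}}_{H^{s}}<\ve$ whose solution satisfies $\n{V(t)}_{H^{s}}>\ve^{-1}$ at some $t<\ve$. Its image $u_0$ then satisfies $\n{u_0}_{H^{s}}\lesssim_{a,b,s}\ve$, and the corresponding $u(t/\lambda)$ has $H^{s}$ norm $\gtrsim_{a,b,s}\ve^{-1}$ at time $t/\lambda<\ve$. Finally, undo the fixed space, time and amplitude normalizations of Subsection~\ref{section:From rectangular tori} back to $\T^{2}_{\rho,1}$; these are again bounded on $H^{s}$ with constants depending only on $\rho$. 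This yields norm inflation at the origin.

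The one point needing care is that the pulled-back initial data genuinely lie in $H^{s}(\T^{2})$ and that uniqueness identifies the solutions. The paper already asserts both.

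\emph{Density.} $\Q_{>0}$ is dense in $(0,\infty)$. For the second class, take the set $\{\sqrt2\,r: r\in\Q_{>0}\}$, which is dense. Each element satisfies $\rho^{2}=2r^{2}\in\Q$ and $\rho\notin\Q$. Hence every neighborhood of any positive aspect ratio contains tori from both classes, and the final sentence of the corollary follows.

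I expect no real obstacle. The only substantive step is confirming that the constants in the norm-inflation transfer are uniform in $\ve$, and they are, since they depend only on $a,b,s,\rho$.
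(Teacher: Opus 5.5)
Your proposal is correct and follows essentially the same route as the paper. Global well-posedness comes from Corollary~\ref{cor:quadratic,irrational,tori}. For $\rho=m/n$ rational, the normalization gives $(a,b)=(n^{2},-m^{2})$, which lies in the square case, and the square-torus reduction then transfers the Shen--Wang norm inflation. Density follows from $\Q_{>0}$ and $\sqrt2\,\Q_{>0}$. The only difference is that you spell out the $H^{s}$-comparability of the scaling maps in the norm-inflation transfer, which the paper leaves implicit.
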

\begin{proof}
The global well-posedness assertion follows from Corollary~\ref{cor:quadratic,irrational,tori}. If $\rho=m/n$ with coprime $m,n\in\N$, then \eqref{equ:rect} gives the normalized coefficients
$a=n^{2}$ and $b=-m^{2}$, so $-ab=(mn)^{2}$ is a square.
The square-case reduction above therefore transfers the norm inflation of Shen--Wang \cite{ShenWang} to $\T^{2}_{\rho,1}$. Finally, $\Q_{>0}$ is dense in $(0,\infty)$, as is $\sqrt2\,\Q_{>0}$. Every $\rho\in\sqrt2\,\Q_{>0}$ satisfies $\rho^{2}\in\Q$ and $\rho\notin\Q$, which proves the claimed
density of the quadratic irrational aspect ratios.
\end{proof}

This gives arithmetic sensitivity of the well-posedness threshold
throughout the space of rectangular aspect ratios.
\subsubsection{Comparison with long-time Strichartz estimates}\label{section:Comparison}
The tori in Corollary~\ref{cor:quadratic,irrational,tori} have irrational aspect ratios but rational squared aspect ratios, so the ratio of their dispersion coefficients is rational. The sharp logarithmic loss in
Theorem~\ref{thm:sharp,Strichartz,L4} is governed by exact lattice arithmetic and, in the hyperbolic case, the absence of rational null directions.

The long-time refinements of Deng--Germain--Guth \cite{DGG17} exploit Diophantine properties of generic rectangular metrics. The subsequent work of Deng--Germain--Guth--Rydin Myerson \cite{DGGRM22} treats
generic non-rectangular two-dimensional tori and also discusses extensions to hyperbolic operators \cite[Remark 1.3]{DGGRM22}.

For a quantitative comparison with the decoupling argument of Guth--Maldague--Oh \cite[Section 4.2]{GMO26}, we use the normalization
\begin{align*}
\widehat{S_{\alpha}(t)f}(k)=e^{-it(k_{1}^{2}+\alpha k_{2}^{2})}\wh f(k),\quad \alpha<0.
\end{align*}
The fixed normalization of time in \cite{GMO26} does not affect the comparison. In our hyperbolic setting, \eqref{equ:rect} gives $\alpha=-p/q=-\rho^{2}$. Thus the tori in Corollary~\ref{cor:quadratic,irrational,tori} correspond to $\alpha\in\Q_{<0}$ with $\sqrt{-\alpha}\notin\Q$.

Guth--Maldague--Oh establish an $\ell^{2}$ decoupling theorem for the
hyperbolic paraboloid using overlapping flat rectangles. Their
Section 4.2 applies this theorem to
\begin{align*}
\Phi(\xi)=\xi_{1}^{2}-\xi_{2}^{2},\quad
\Lambda_\delta=\bigl(\delta\Z\times\sqrt2\,\delta\Z\bigr)\cap[0,1]^{2}.
\end{align*}
Since $\Phi(m\delta,\sqrt2\,n\delta)=\delta^{2}(m^{2}-2n^{2})$,
the lattice-spacing parameter $\sqrt2$ in this example corresponds
to the rational dispersion coefficient $\alpha=-2$.
Their argument yields
\begin{align*}
    \n{S_{-2}(t)P_{\le N}f}_{L^{4}([0,T]\times\T^{2})}
    \lesssim_\varepsilon
    N^{\ve} T^{1/4}\n{f}_{L^{2}},
    \quad T\ge N,
\end{align*}
for every $\varepsilon>0$.
The same argument extends to $\alpha=-p/q$, where
$p,q\in\N$ are coprime and $p/q$ is not a square in $\Q$.
Indeed, replace the lattice spacing $\sqrt2$ by $\sqrt{p/q}$.
The flat-piece counting step in \cite[Section 4.2]{GMO26} supplies
the separation required in their decoupling argument. As a consequence
of this adaptation, one obtains
\begin{equation}\label{equ:compare,gmo}
\n{S_{\alpha}(t)P_{\le N}f}_{L^{4}([0,T]\times\T^{2})}\lesssim_{\varepsilon,\alpha}N^{\ve} T^{1/4}\n{f}_{L^{2}},
\quad T\ge N,
\end{equation}
throughout this nonsquare rational-coefficient family.

Our results determine the precise endpoint behavior. Since $S_{\alpha}(t)=S_{q,-p}(t/q)$, Theorem~\ref{thm:sharp,short-time,L4} gives the lossless estimate
\begin{equation} \label{equ:compare,lossless}
\n{S_{\alpha}(t)P_{\le N}f}_{L^{4}(I\times\T^{2})} \lesssim_{\alpha}\n{f}_{L^{2}},
\quad |I|\le c_{\alpha}(\log N)^{-1},
\end{equation}
whose time scale is optimal up to a constant. This does not follow merely by restricting the fixed-period estimate of
Theorem~\ref{thm:sharp,Strichartz,L4}, since restriction alone retains the logarithmic loss.
Partitioning $[0,T]$ into intervals of the length in
\eqref{equ:compare,lossless} and summing fourth powers yields
\begin{equation}\label{equ:compare,log}
\n{S_{\alpha}(t)P_{\le N}f}_{L^{4}([0,T]\times\T^{2})}\lesssim_{\alpha}(T\log N)^{1/4}\n{f}_{L^{2}},
\quad T\ge(\log N)^{-1}.
\end{equation}
Compared with \eqref{equ:compare,gmo}, this replaces the
$N^{\ve}$ loss by the optimal factor $(\log N)^{1/4}$
and extends the corresponding time-dependent estimate from $T\ge N$
to $T\ge(\log N)^{-1}$. At the latter threshold, the estimate is lossless.

\subsection{Organization of the paper}
Section~\ref{section:Preliminaries} introduces the Fourier and function-space
notation and records the resonance identities and null-direction
criterion. Section~\ref{section:Sharp Strichartz} develops the counting estimates and
proves Theorems~\ref{thm:sharp,Strichartz,L4} and~\ref{thm:sharp,short-time,L4}, including the
optimality of the frequency losses and time scales.
Section~\ref{section:Global well-posedness} uses the lossless short-time estimate to establish
the small-mass global well-posedness result in Theorem~\ref{thm:gwp}.
Appendix~\ref{appendix:Typed-rectangle} supplies the arithmetic reduction and
type-by-type counting arguments adapted from Herr--Kwak \cite{HK24}
that underpin Section~\ref{section:Sharp Strichartz}.

\section{Preliminaries}\label{section:Preliminaries}
This section collects the notation and auxiliary material used throughout
the paper.  We first fix the Fourier and function-space conventions,
including the adapted $Y^{s}$ spaces used in the nonlinear argument.  We
then record the resonance identities and the basic null-direction
criterion.

\subsection{Fourier and frequency notation}
We use the Fourier convention
\begin{align*}
f(x)=\sum_{k\in\Z^{2}}\wh f(k)e^{ik\cdot x},\quad \wh{f}(k)=\frac1{(2\pi)^{2}}\int_{\T^{2}}f(x)
e^{-ik\cdot x}\,dx.
\end{align*}
For a set $S\subset\Z^{2}$, let $P_S$ denote the sharp Fourier projection
\begin{align*}
        \widehat{P_Sf}(k)=\bfone_S(k)\wh f(k).
\end{align*}
For $N\ge1$, we set
\begin{align*}
 A_{N}=[-N,N]^{2}\cap\Z^{2}, \quad P_{\le N}=P_{A_{N}}.
\end{align*}
Dyadic frequency scales range over $\{2^j:j\in\Z_{\ge 0}\}$.
With the convention $P_{\le1/2}=0$, for dyadic $M\ge1$ we write
\begin{align*}
&P_{M}=P_{\le M}-P_{\le M/2},\quad
P_{<M}=P_{\le M/2},\quad
P_{\ge M}=1-P_{<M},\\
&u_{M}=P_{M}u,\quad u_{\ge M}=P_{\ge M}u.
\end{align*}
In particular, $P_{1}=P_{\le1}$. We use $\langle\xi\rangle=(1+|\xi|^{2})^{1/2}$.
%All $L^{p}$-norms on $\T^{2}$ below are taken with respect to Lebesgue
%measure $dx$, so Parseval's identity reads $\n f_{L^{2}}=2\pi\n{\wh f}_{\ell^{2}}$.
%Harmless powers of $2\pi$ are absorbed into implicit constants.

\subsection{$U$-$V$ spaces}\label{section:Y spaces}
We recall the $U^{p}/V^{p}$ function-space framework from
\cite[Section 2]{HTT11} and the associated $Y^{s}$ spaces in the form
used in \cite[Section 4]{HK24}.

Let $H$ be a Hilbert space and $1\le p<\infty$. A $U^{p}(\R;H)$-atom is a step function
\begin{align*}
 A(t)=\sum_{k=1}^{K}\bfone_{[t_{k-1},t_k)}(t)\,h_{k-1},
 \quad \sum_{k=0}^{K-1}\n{h_k}_H^{p}=1,
\end{align*}
where $-\infty=t_0<t_{1}<\cdots<t_K=\infty$,
$h_k\in H$, and $h_0=0$. 
The space $U^p(\R;H)$ consists of sums
$u=\sum_n c_n A_n$, where $A_n$ are $U^p$-atoms and
$\sum_n|c_n|<\infty$, equipped with the norm
\begin{align*}
\n{u}_{U^p(\R;H)}
=\inf\bigg\{
\sum_n|c_n|:
u=\sum_n c_n A_n,\quad A_n\text{ are }U^p\text{-atoms}
\bigg\}.
\end{align*}
The space $V^{p}(\R;H)$ consists of right-continuous functions
with limits at $\pm\infty$, vanishing at $-\infty$, and finite
\begin{align*}
 \n{v}_{V^{p}(\R;H)}=\sup_{\{t_k\}}\bigg(\sum_k\n{v(t_k)-v(t_{k-1})}_H^{p}\bigg)^{1/p},
\end{align*}
where the supremum is taken over finite partitions
$-\infty=t_{0}<t_{1}<\cdots<t_K=\infty$.  At the terminal endpoint in the
variation sum, we use the standard convention $v(t_K)=0$, independently
of the actual limit as $t\to+\infty$. At $t_{0}=-\infty$, we use the
actual limit, which is zero. Thus, $V^{p}$ is the space denoted by $V^{p}_{\mathrm{rc}}$ in
\cite[Definition 2.2]{HTT11}. The adapted spaces $U_\Omega^{p}L^{2}$ and $V_\Omega^{p}L^{2}$ with $\Omega$ as in \eqref{equ:disp} are defined by applying $U^{p}(\R;L^{2})$ and $V^{p}(\R;L^{2})$ to $S(-t)u(t)$.

For $s\in\R$, we define $Y_\omega^{s}$ by
\begin{align*}
\n{u}_{Y_\omega^{s}} =\bigg(\sum_{\xi\in\Z^{2}}\langle\xi\rangle^{2s}
\n{e^{it\omega(\xi)}\wh u(t,\xi)}_{V^{2}_t}^{2}\bigg)^{1/2}.
\end{align*}
For an interval $I$, the corresponding restriction norm is
\begin{align*}
\n u_{Y_\omega^{s}(I)}=\inf\bigl\{\n U_{Y_\omega^{s}}:U|_I=u\bigr\}.
\end{align*}
We suppress the subscript $\omega$ below and write $Y^{s}$.

We recall the following standard properties of the restriction
spaces; see \cite[Proposition 4.2]{HK24} or \cite[Section 2]{HTT11}.
\begin{lemma}[Basic properties of $Y^{s}$]\label{lemma:Y space}
Let $s\in\R$ and let $I\subset\R$ be a bounded interval. Then we have
\begin{enumerate}
\item if $A,B\subset\Z^{2}$ are disjoint, then
\begin{align*}
\n{P_{A\cup B}u}_{Y^{s}(I)}^{2}=\n{P_Au}_{Y^{s}(I)}^{2}+\n{P_Bu}_{Y^{s}(I)}^{2};
\end{align*}
\item for $\phi \in H^{s} (\T^{2})$, we have
\begin{align*}
\n{S(t)\phi}_{Y^{s}(I)}\lesssim\n{\phi}_{H^{s}};
\end{align*}
\item for $u \in Y^{s} (I)$, we have
\begin{align*}
\n{u}_{L_t^\infty H_x^{s}(I\times\T^{2})}\lesssim
\n{u}_{Y^{s}(I)};
\end{align*}
\item if $t_{0}\in I$ and $F\in L^{1}(I;H^{s}(\T^{2}))$, then for the
Duhamel operator
\begin{align*}
\mathcal I_{t_{0}}(F)(t)=\int_{t_{0}}^t S(t-t')F(t')\,dt',
\end{align*}
one has the dual estimate
\begin{align*}
\n{\mathcal I_{t_{0}}(F)}_{Y^{s}(I)}\lesssim
\sup_{\n{v}_{Y^{-s}(I)}\le1}
\bbabs{\int_I\int_{\T^{2}}F\overline v\,dxdt}.
\end{align*} 
\end{enumerate}
\end{lemma}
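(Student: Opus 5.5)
The statement just given is Lemma~\ref{lemma:Y space}, a list of standard properties of the restriction spaces $Y^{s}(I)$. I would prove it by reducing to the corresponding facts for $U^{2}$ and $V^{2}$ in \cite[Section 2]{HTT11}, applied frequency by frequency. Throughout I would use the unitary conjugation $u\mapsto S(-t)u$: since $S(t)$ acts diagonally on Fourier coefficients with unimodular factors, $\n{u}_{Y^{s}}$ is just a weighted $\ell^{2}_\xi$ sum of the scalar $V^{2}_t$ norms of $t\mapsto e^{it\omega(\xi)}\wh u(t,\xi)$.

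\textbf{Part (1).} On the whole line, the $Y^{s}$ norm squared is a sum over $\xi$. Hence $\n{P_{A\cup B}U}_{Y^{s}}^{2}=\n{P_AU}_{Y^{s}}^{2}+\n{P_BU}_{Y^{s}}^{2}$ holds exactly for any extension $U$. For the restriction norms:
\begin{itemize}
\item[$\le$:] if $U_A$ and $U_B$ are near-optimal extensions of $P_Au$ and $P_Bu$, then $P_AU_A+P_BU_B$ extends $P_{A\cup B}u$. Since $P_A$ and $P_B$ do not increase $Y^{s}$ norms, its norm squared is at most the sum of the two norms squared.
\item[$\ge$:] if $U$ extends $P_{A\cup B}u$, then $P_AU$ and $P_BU$ extend $P_Au$ and $P_Bu$.
\end{itemize}
Taking infima gives the identity.

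\textbf{Part (2).} Extend $S(t)\phi$ by $S(t)\bfone_{[t_{0},\infty)}(t)\phi$ with $t_0=\inf I$. Each frequency component is then the step function $\bfone_{[t_0,\infty)}\wh\phi(\xi)$, whose $V^{2}$ norm is $\lesssim|\wh\phi(\xi)|$; a single jump, plus the terminal convention, costs at most a factor $2^{1/2}$. Summing with the weights $\langle\xi\rangle^{2s}$ gives $\n{\phi}_{H^{s}}$.

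\textbf{Part (3).} The pointwise bound $|v(t)|\le\n{v}_{V^{2}}$, obtained by testing the partition $\{-\infty,t\}$, gives for each $t\in I$
\[
\n{u(t)}_{H^{s}}^{2}\le\sum_\xi\langle\xi\rangle^{2s}\n{e^{it\omega(\xi)}\wh U(t,\xi)}_{V^{2}}^{2}
\]
for any extension $U$. Taking the infimum over $U$ gives the claim.

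\textbf{Part (4).} This is the main obstacle. I would follow \cite[Proposition 2.10]{HTT11} and \cite[Proposition 4.2]{HK24}, using the duality $(U^{2})^{*}=V^{2}$ applied to the Duhamel integral. Concretely, fix $\xi$ and set $g_\xi(t)=\bfone_I(t)\int_{t_0}^{t}e^{it'\omega(\xi)}\wh F(t',\xi)\,dt'$. This is absolutely continuous on $I$, and its $U^{2}$ norm (hence its $V^{2}$ norm, via $U^{2}\hookrightarrow V^{2}$) is given by
\[
\sup\Bigl|\int_I \wh F(t',\xi)\,\overline{e^{-it'\omega(\xi)}w(t')}\,dt'\Bigr|,
\]
the supremum taken over $\n{w}_{V^{2}}\le1$. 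The subtlety is that $V^{2}$ is not in duality with $V^{2}$, only with $U^{2}$. So I would state the estimate as a $U^{2}$-based bound and then pass to $V^{2}$, which is harmless since the lemma asks only for an upper bound on the $Y^{s}$ norm of $\mathcal I_{t_0}(F)$.

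Summing over $\xi$ with weights, duality of the weighted $\ell^{2}$ space converts the $\ell^{2}_\xi$ sum of the individual suprema into a single supremum over $v$ with $\n{v}_{Y^{-s}}\le1$, testing against $\int_I\int F\overline v$. The restriction to $I$ is handled by the cutoff $\bfone_I$: the extension $g_\xi$ is constant after $\sup I$, costing at most one extra jump. The remaining delicate point is that the tested $v$ only needs to be defined on $I$, which matches the $Y^{-s}(I)$ restriction norm by taking infima over extensions.
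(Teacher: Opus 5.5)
Your proposal is correct and is essentially the argument the paper defers to by citing \cite[Proposition 4.2]{HK24} and \cite[Section 2]{HTT11}: (1)--(3) follow from the frequency-wise $\ell^{2}$ structure of $Y^{s}$ and elementary $V^{2}$ bounds, and (4) follows from $U^{2}$--$V^{2}$ duality applied mode by mode together with $U^{2}\hookrightarrow V^{2}$. One detail needs tightening: when $t_{0}$ is interior to $I$, your $g_\xi$ jumps at $\inf I$, so the exact duality formula fails for it as written. Apply that formula instead to the Duhamel integral started at $\inf I$ (extended by zero before $I$ and constantly after it), and control the remaining free term $S(t)\int_{\inf I}^{t_{0}}S(-t')F(t')\,dt'$ by (2) together with testing against $v=\bfone_{[\inf I,t_{0})}(t)S(t)\phi$ for $\phi\in H^{-s}$.
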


\subsection{Resonance notation and null directions}
For $u,v\in\Z^{2}$, we define the symmetric bilinear form
\begin{align*}
 B_{a,b}(u,v)=a u_{1}v_{1}+b u_{2}v_{2}.
\end{align*}
A lattice parallelogram is an ordered quadruple
$Q=(\xi_{1},\xi_{2},\xi_{3},\xi_{4})\in(\Z^{2})^{4}$ satisfying
\begin{align}\label{equ:para}
\xi_{1}+\xi_{3}=\xi_{2}+\xi_{4},
\end{align}
and we allow repeated vertices and collinear configurations.
For every such quadruple, we have
\begin{equation}\label{equ:res}
 \omega(\xi_{1})-\omega(\xi_{2})+\omega(\xi_{3})-\omega(\xi_{4})=
 2B_{a,b}(\xi_{1}-\xi_{2},\xi_{1}-\xi_{4}).
\end{equation}
Let $\cQ$ denote the set of all lattice parallelograms, and for
$\tau\in\Z_{\ge0}$, we define
\begin{equation}\label{equ:Qtau}
 \cQ_\tau^{a,b} =
\lrs{Q\in\cQ:\abs{\omega(\xi_{1})-\omega(\xi_{2})+\omega(\xi_{3})-\omega(\xi_{4})}=\tau}.
\end{equation}
We call a lattice parallelogram
$Q=(\xi_{1},\xi_{2},\xi_{3},\xi_{4})\in\cQ$ a
$B_{a,b}$-rectangle, if its adjacent side vectors are $B_{a,b}$-orthogonal, that is,
\begin{align*}
 B_{a,b}(\xi_{1}-\xi_{2},\xi_{1}-\xi_{4})=0.
\end{align*}
By \eqref{equ:res}, the $B_{a,b}$-rectangles are precisely the elements of $\cQ_{0}^{a,b}$.

The square case can now be characterized by the existence of a lattice
null direction.

\begin{lemma}[Null directions]
\label{lemma:null}
Let $a,b\in\Z\setminus\{0\}$. Then the following are equivalent:
\begin{enumerate}
\item the equation $a n^{2}+b m^{2}=0$ has a nonzero solution $(n,m)\in\Z^{2}$.
\item $ab<0$ and $-ab$ is a square.
\end{enumerate}
\end{lemma}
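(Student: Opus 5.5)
We prove both implications directly, using only elementary arithmetic in $\Z$.

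For (2)$\Rightarrow$(1): assume $ab<0$ and $-ab=m_0^{2}$ with $m_0\in\N$. Since $ab\neq0$, we have $m_0\neq0$. We look for a solution of the form $(n,m)=(m_0,\,\cdot\,)$ after scaling. Set $n=b$ and $m=m_0$. Then
\begin{align*}
an^{2}+bm^{2}=ab^{2}+bm_0^{2}=ab^{2}-ab\cdot b=0,
\end{align*}
and $(n,m)=(b,m_0)\neq(0,0)$. This gives the required nonzero solution.

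For (1)$\Rightarrow$(2): suppose $an^{2}+bm^{2}=0$ with $(n,m)\neq(0,0)$. We first note that both $n$ and $m$ are nonzero. Indeed, if $n=0$, then $bm^{2}=0$ with $b\neq0$, which forces $m=0$; the case $m=0$ is symmetric. Hence $an^{2}=-bm^{2}$ with $n^{2},m^{2}>0$, so $a$ and $-b$ have the same sign, that is, $ab<0$. Multiplying the identity $an^{2}=-bm^{2}$ by $a$ gives
\begin{align*}
-ab\,m^{2}=a^{2}n^{2}=(an)^{2}.
\end{align*}
Thus $-ab=(an/m)^{2}$ is the square of a rational number. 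Since $-ab$ is an integer, it is the square of an integer, because a rational number whose square is an integer is itself an integer (rational root theorem, or unique factorization).

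The only step with any content is this last one: passing from ``square in $\Q$'' to ``square in $\Z$''. It follows from the standard fact that $\Z$ is integrally closed. All other steps are direct sign and substitution checks.
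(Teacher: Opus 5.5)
Your proof is correct and follows the paper's argument: the forward direction is the same (both coordinates nonzero, the sign comparison giving $ab<0$, then $-ab=(an/m)^{2}$ together with the fact that an integer which is a rational square is an integer square). For the converse you use the witness $(b,m_0)$ where the paper uses $(h,a)$, an equivalent choice; only the opening remark about a solution of the form $(m_0,\cdot)$ does not match the witness you actually take and can be deleted.
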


\begin{proof}
If $an^{2}+bm^{2}=0$ has a nonzero integer solution, then $nm\ne0$,
$ab<0$, and
\begin{align*}
        -ab=\left(\frac{an}{m}\right)^{2}.
\end{align*}
An integer that is a square in $\Q$ is a square in $\Z$.
Conversely, if $h^{2}=-ab$ for some $h\in\Z$, then
$(n,m)=(h,a)$ is a nonzero integer solution, since
$ah^{2}+ba^{2}=0$.
\end{proof}

\section{Sharp Strichartz estimates}\label{section:Sharp Strichartz}
The purpose of this section is to prove the Strichartz estimates,
Theorems~\ref{thm:sharp,Strichartz,L4} and~\ref{thm:sharp,short-time,L4}. 

\subsection{Counting input for the Strichartz estimates}\label{section:Counting input}
For a function $f:\Z^{2}\to[0,\infty)$ and a parallelogram
$Q=(\xi_{1},\xi_{2},\xi_{3},\xi_{4})$, we write
\begin{equation}\label{equ:GQ}
f(Q)=f(\xi_{1})f(\xi_{2})f(\xi_{3})f(\xi_{4}).
\end{equation}
We first recall the oscillatory reduction in the following form.

\begin{lemma}[Oscillatory reduction, {\cite[Proof of Theorem 1.2]{HK24}}]\label{lemma:Oscillatory reduction}
Let $T>0$, and assume $\wh g\ge0$ is finitely supported. Then
\begin{align}\label{equ:osc}
\n{S(t)g}_{L^{4}([0,T]\times\T^{2})}^{4} &\lesssim T\sum_{Q\in\cQ_{0}^{a,b}}\wh g(Q) +
\sum_{\tau>0}\min\left\{T,\frac1{T\tau^{2}}\right\}
\sum_{Q\in\cQ_\tau^{a,b}}\wh g(Q),
\end{align}
where $S(t) = S_{a,b} (t)$ and $\cQ_\tau^{a,b}$ are defined in \eqref{equ:flow} and \eqref{equ:Qtau}, respectively; and
$\wh g(Q)$ follows the convention \eqref{equ:GQ}.
\end{lemma}
%\begin{proof}
%Following \cite[Proof of Theorem 1.2]{HK24}, we majorize
%$\bfone_{[0,T]}(t)$ by
%$(2-t/T)\bfone_{[0,2T]}(t)$. After expanding the fourth power,
%spatial orthogonality imposes the parallelogram relation. For the signed
%modulation $\sigma$, the real part of the time integral is
%\begin{align*}
%\operatorname{Re}\int_{0}^{2T}(2-t/T)e^{-it\sigma}\,dt=
%\begin{cases}
%2T, & \sigma=0,\\
%\frac{1-\cos(2T\sigma)}{T\sigma^{2}}, & \sigma\ne0.
%\end{cases}
%\end{align*}
%
%This kernel is nonnegative and, for $\sigma\ne0$, bounded by
%$C\min\{T,(T\sigma^{2})^{-1}\}$. Since $\wh g(Q)\ge0$, grouping
%the terms according to $\tau=|\sigma|$ proves \eqref{equ:osc}.
%\end{proof}

\subsubsection{The arithmetic incidence mechanism}
We now introduce the geometric ingredients.

For $\xi:=(\xi_{x_{1}},\xi_{x_{2}})\in\Z^{2}\setminus\{0\}$, we set
\begin{align*}
 d_{a,b}(\xi)=\gcd(|a\xi_{x_{1}}|,|b\xi_{x_{2}}|),\quad
\xi^{\perp_{a,b}}=\frac1{d_{a,b}(\xi)}(-b\xi_{x_{2}},a\xi_{x_{1}}).
\end{align*}

We record precisely the lattice facts that replace Euclidean
orthogonality.
\begin{lemma}[Integral $B_{a,b}$-orthogonality]\label{lemma:Borth}
Let $\xi\in\Z^{2}\setminus\{0\}$. Then it holds that
\begin{enumerate}
\item $\xi^{\perp_{a,b}}$ is a primitive integer vector and
\begin{align*}
        \{\eta\in\Z^{2}:B_{a,b}(\eta,\xi)=0\}
        =\Z\xi^{\perp_{a,b}}.
\end{align*}
\item The homomorphism $z\mapsto B_{a,b}(z,\xi)$ from $\Z^{2}$ to
$\Z$ has image
\begin{align*}
B_{a,b}(\Z^{2},\xi)=d_{a,b}(\xi)\Z.
\end{align*}
Consequently, for $c\in\Z$, the set
\begin{align*}
\{\eta\in\Z^{2}:B_{a,b}(\eta,\xi)=c\}
\end{align*}
is either empty or a translate of $\Z\xi^{\perp_{a,b}}$.
\item If $-ab$ is not a square, then
$B_{a,b}(\xi,\xi)\ne0$, and hence $\xi$ and
$\xi^{\perp_{a,b}}$ are linearly independent.
\end{enumerate}
\end{lemma}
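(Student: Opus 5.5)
The argument is elementary lattice arithmetic, and I will treat the three parts in order. Write $\xi=(\xi_{x_1},\xi_{x_2})$, $d=d_{a,b}(\xi)$, and $\eta=(\eta_1,\eta_2)$. Since $\xi\ne0$ and $a,b\ne0$, at least one of $a\xi_{x_1}$, $b\xi_{x_2}$ is nonzero, so $d\ge1$ is well defined.

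For (1), I first observe that $B_{a,b}(\eta,\xi)=(a\xi_{x_1})\eta_1+(b\xi_{x_2})\eta_2$ is a linear form in $\eta$ with coefficient vector $(a\xi_{x_1},b\xi_{x_2})$. Set $u=a\xi_{x_1}/d$ and $v=b\xi_{x_2}/d$. These are coprime integers, and
\begin{align*}
\xi^{\perp_{a,b}}=(-v,u).
\end{align*}
Because $\gcd(u,v)=1$, this vector is primitive. The equation $B_{a,b}(\eta,\xi)=0$ is equivalent to $u\eta_1+v\eta_2=0$. If $v\ne0$, then $v\mid u\eta_1$, and coprimality gives $v\mid\eta_1$. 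Writing $\eta_1=-vt$ forces $\eta_2=ut$, so $\eta=t(-v,u)$. If $v=0$, then $u=\pm1$ and the claim is immediate. This proves that the solution set is exactly $\Z\xi^{\perp_{a,b}}$.

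For (2), the image of $z\mapsto B_{a,b}(z,\xi)=a\xi_{x_1}z_1+b\xi_{x_2}z_2$ is the subgroup of $\Z$ generated by $a\xi_{x_1}$ and $b\xi_{x_2}$. By Bézout this subgroup is $\gcd(|a\xi_{x_1}|,|b\xi_{x_2}|)\Z=d\Z$. The level set $\{\eta:B_{a,b}(\eta,\xi)=c\}$ is a fiber of a group homomorphism. It is therefore either empty or a coset of the kernel, and the kernel was identified in (1) as $\Z\xi^{\perp_{a,b}}$.

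For (3), the quantity $B_{a,b}(\xi,\xi)=a\xi_{x_1}^2+b\xi_{x_2}^2$ is nonzero for $\xi\ne0$ by Lemma~\ref{lemma:null}. When $-ab$ is not a square, condition (2) of that lemma fails (either $ab>0$, or $ab<0$ with $-ab$ nonsquare), so condition (1) fails as well. For linear independence, suppose instead that $\xi^{\perp_{a,b}}=\lambda\xi$ for some $\lambda\in\Q$. Then
\begin{align*}
0=B_{a,b}(\xi^{\perp_{a,b}},\xi)=\lambda B_{a,b}(\xi,\xi),
\end{align*}
so $\lambda=0$. This is impossible because $\xi^{\perp_{a,b}}$ is primitive and hence nonzero. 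Alternatively, one can compute the determinant
\begin{align*}
\det(\xi,\xi^{\perp_{a,b}})=\frac{a\xi_{x_1}^2+b\xi_{x_2}^2}{d}\neq0.
\end{align*}

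No step is a genuine obstacle. The only point needing care is the divisibility argument in (1), in particular the degenerate cases where one coordinate coefficient vanishes.
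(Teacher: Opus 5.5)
Your proposal is correct and follows essentially the same route as the paper: you use coprimality of $a\xi_{x_1}/d$ and $b\xi_{x_2}/d$ to identify the kernel, the ideal generated by the two coefficients (Bézout) to get the image and its cosets, and Lemma~\ref{lemma:null} together with $B_{a,b}$-orthogonality for independence. The only differences are cosmetic. You fold the coordinate-axis cases into a single divisibility argument with the subcase $v=0$, whereas the paper treats them separately, and you add an optional determinant check.
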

\begin{proof}
We put $d=d_{a,b}(\xi)$. The two coordinates of
$\xi^{\perp_{a,b}}=(-b\xi_{x_{2}}/d,a\xi_{x_{1}}/d)$ are coprime, so this vector
is primitive, and it is plainly $B_{a,b}$-orthogonal to $\xi$.
Conversely, if $\xi_{x_{1}}=0$, orthogonality is equivalent to
$\eta_{x_{2}}=0$, and $\xi^{\perp_{a,b}}$ is a signed coordinate
unit vector spanning this kernel. The case $\xi_{x_{2}}=0$ is identical.
We may therefore assume that both coordinates of $\xi$ are nonzero. If
\begin{align}\label{equ:Bab=0}
 B_{a,b}(\xi,\eta)=a\xi_{x_{1}}\eta_{x_{1}}+b\xi_{x_{2}}\eta_{x_{2}}=0,
\end{align}
the coprimality of
$a\xi_{x_{1}}/d$ and $b\xi_{x_{2}}/d$ implies $\eta$ must be of the form
$\eta=(k_{1}\frac{b\xi_{x_{2}}}{d},k_{2}\frac{a\xi_{x_{1}}}{d})$ for some integers $k_{1}$ and $k_{2}$. Moreover, the equality \eqref{equ:Bab=0} forces $k_{1}=-k_{2}$, so
$\eta\in\Z\xi^{\perp_{a,b}}$. This proves the first assertion.

For the second assertion, we notice that the image of $z\mapsto B_{a,b}(z,\xi)$ is the ideal generated by
$a\xi_{x_{1}}$ and $b\xi_{x_{2}}$, which is
$\gcd(|a\xi_{x_{1}}|,|b\xi_{x_{2}}|)\Z=d\Z$.
If an affine level is nonempty, subtracting one of its points reduces
it to the kernel described in the first assertion.

Finally, the last assertion that $B_{a,b}(\xi,\xi)\neq 0$ follows from  Lemma \ref{lemma:null}. Moreover,
linear dependence of $\xi$ and $\xi^{\perp_{a,b}}$, together with their
orthogonality, would give $B_{a,b}(\xi,\xi)=0$.
\end{proof}

Thus, once one side direction of a $B_{a,b}$-rectangle is fixed, the
adjacent side is constrained to lie along a unique primitive lattice direction.
Throughout the remainder of Subsection~\ref{section:Counting input}, we assume that
$-ab$ is not a square.

\begin{remark}\rm
A $B_{a,b}$-rectangle need not be a Euclidean rectangle. The extension
of the Herr--Kwak argument uses three facts following from
Lemma~\ref{lemma:Borth} and the resonance identity \eqref{equ:res}:
the orthogonal lattice is $\Z\xi^{\perp_{a,b}}$, the two side
directions are independent in the nonsquare case, and a parallelogram
of modulation $\tau$ with nonzero side $\xi$ satisfies
$2d_{a,b}(\xi)\mid\tau$. 
\end{remark}

An $(a,b)$-cross is a triple $(p,\ell_{1},\ell_{2})$, where $p\in\Z^{2}$, $\ell_{1},\ell_{2}\subset\R^{2}$ are affine lines through $p$, and their direction vectors are $B_{a,b}$-orthogonal. If
\begin{align*}
        Q=(\xi_{1},\xi_{2},\xi_{3},\xi_{4})\in\cQ_{0}^{a,b}
\end{align*}
has four distinct vertices, then each vertex determines an $(a,b)$-cross.

Let $S\subset\Z^{2}$ be finite. For a line $\ell\subset\R^{2}$, we write
\begin{align}\label{equ:rS,number}
r_S(\ell)=\#(\ell\cap S).
\end{align}
The Szemer\'edi--Trotter theorem \cite[Corollary 8.5]{tao2006additive} gives, for $K\ge2$,
\begin{equation}\label{equ:rich}
\#\{\ell:r_S(\ell)\ge K\}\lesssim\frac{(\#S)^{2}}{K^3}+\frac{\#S}{K}.
\end{equation}
The implicit constant is absolute. Suppose $S_j \subset \Z^{2}$ is a dyadic level set with $\#S_j\le2^j$.
We follow the notation from \cite{HK24} in the following.
For a vertex $\xi\in S_j$, we classify an $(a,b)$-cross
$(\xi,\ell_{1},\ell_{2})$ by
\begin{align*}
R_j(\xi;\ell_{1},\ell_{2})=\max\{r_{S_j}(\ell_{1}),r_{S_j}(\ell_{2})\}.
\end{align*}
Fix a sufficiently large absolute constant $C$. The cross is
\begin{itemize}
\item of
\emph{type $1$} if $R_j\ge2^{j/2+C}$,
\item of \emph{type $2$} if $2\le R_j<2^{j/2+C}$,
and
\item
of \emph{type $3$} if $R_j=1$.
\end{itemize}
We also call these crosses rich,
medium, and poor, respectively. A vertex of a rectangle is of type
$\alpha$ when its two adjacent side lines form a cross of type
$\alpha$, with richness measured in the level set containing that
vertex.

%{\color{blue}In the argument used below, all points incident to two distinct
%rich lines are removed from the current level set. Consequently, every
%remaining point lies on at most one rich line, which is precisely the
%hypothesis in Proposition \ref{prop:count}. Type $1$ crosses
%are then handled by uniqueness from an opposite pair of vertices; type
%$2$ crosses produce the single logarithmic summation; and type $3$
%crosses are controlled by a one-corner count.}

Lemmas~\ref{lemma:corner}--\ref{lemma:divisor side} below are the
$B_{a,b}$-orthogonal analogues of Herr--Kwak's counting lemmas
\cite[Lemmas 3.4--3.5]{HK24}, with their Lemma 3.5 separated into
unweighted and divisor-weighted estimates. They provide the counting
input for Proposition~\ref{prop:count} and the subsequent Strichartz
estimates. We include brief proofs to explain how the arguments adapt
to the nonsquare setting, using the orthogonal lattice structure
established in Lemma~\ref{lemma:Borth} and the modified divisor
$d_{a,b}$.

\begin{lemma}[A one-corner count]\label{lemma:corner}
Let $S_{j_{1}},S_{j_{2}},S_{j_{3}},S_{j_{4}}\subset\Z^{2}$ satisfy $\#S_{j_i}\le2^{j_i}$.
Let $a,b\in \Z$ such that $-ab$ is not a square.
For a fixed integer $a_{3}\ge0$, the number of $B_{a,b}$-rectangles
\begin{align*}
Q=(\xi_{1},\xi_{2},\xi_{3},\xi_{4})\in\cQ_{0}^{a,b}\cap(S_{j_{1}}\times S_{j_{2}}\times S_{j_{3}}\times S_{j_{4}})
\end{align*}
with four distinct vertices such that
\begin{align}\label{equ:condition,corner}
 \#\left(\overleftrightarrow{\xi_{2}\xi_{3}}\cap S_{j_{3}}\right)<2^{a_{3}+1}
\end{align}
is bounded by  $O(2^{j_{1}+j_{2}+a_{3}})$.
\end{lemma}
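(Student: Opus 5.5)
The plan is to fix the pair $(\xi_{1},\xi_{2})\in S_{j_{1}}\times S_{j_{2}}$, which gives at most $2^{j_{1}+j_{2}}$ choices. For each such pair, we show that the number of admissible completions $(\xi_{3},\xi_{4})$ is $O(2^{a_{3}})$.

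Once $\xi_{1},\xi_{2}$ are fixed with $\xi_{1}\neq\xi_{2}$ (the vertices are distinct), the side vector $\xi_{1}-\xi_{2}$ is a nonzero lattice vector. The rectangle condition is $B_{a,b}(\xi_{1}-\xi_{2},\xi_{1}-\xi_{4})=0$. Since $\xi_{1}-\xi_{4}=\xi_{2}-\xi_{3}$ by \eqref{equ:para}, this is equivalent to $B_{a,b}(\xi_{1}-\xi_{2},\xi_{2}-\xi_{3})=0$. By Lemma~\ref{lemma:Borth}(1), this forces $\xi_{3}-\xi_{2}\in\Z(\xi_{1}-\xi_{2})^{\perp_{a,b}}$. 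Thus $\xi_{3}$ lies on the line $\ell$ through $\xi_{2}$ with direction $(\xi_{1}-\xi_{2})^{\perp_{a,b}}$, and this line depends only on $(\xi_{1},\xi_{2})$. By Lemma~\ref{lemma:Borth}(3), which uses the nonsquare hypothesis, this direction is independent of $\xi_{1}-\xi_{2}$, so $\ell$ is a genuine second line through $\xi_{2}$. Because the vertices are distinct, $\xi_{3}\neq\xi_{2}$, so $\ell=\overleftrightarrow{\xi_{2}\xi_{3}}$.

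The hypothesis \eqref{equ:condition,corner} then states that $\#(\ell\cap S_{j_{3}})<2^{a_{3}+1}$. Hence there are fewer than $2^{a_{3}+1}$ choices for $\xi_{3}\in S_{j_{3}}\cap\ell$. The condition depends only on $\ell$, which is determined by $(\xi_{1},\xi_{2})$, so it either holds for all candidate $\xi_{3}$ on $\ell$ or for none. Finally, $\xi_{4}=\xi_{1}+\xi_{3}-\xi_{2}$ is determined by the other three vertices. Multiplying the counts gives the total bound $2^{j_{1}+j_{2}}\cdot 2^{a_{3}+1}=O(2^{j_{1}+j_{2}+a_{3}})$.

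The only point requiring care is the identification of $\overleftrightarrow{\xi_{2}\xi_{3}}$ with the fixed line $\ell$. This needs $\xi_{3}\neq\xi_{2}$, guaranteed by distinctness, and the characterization of the $B_{a,b}$-orthogonal complement as a single lattice line, given by Lemma~\ref{lemma:Borth}(1). Everything else is routine bookkeeping, and no Szemer\'edi--Trotter input is needed at this stage.
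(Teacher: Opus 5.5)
Your proposal is correct and follows essentially the same argument as the paper: fix $(\xi_{1},\xi_{2})$, observe that $B_{a,b}$-orthogonality pins $\xi_{3}$ to the line through $\xi_{2}$ orthogonal to $\xi_{1}-\xi_{2}$, use \eqref{equ:condition,corner} to get $O(2^{a_{3}})$ choices of $\xi_{3}$, and recover $\xi_{4}$ from the parallelogram relation. Your appeal to Lemma~\ref{lemma:Borth}(3) is harmless but unnecessary; the count only needs part (1) and the distinctness $\xi_{3}\neq\xi_{2}$.
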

\begin{proof}
There are $O(2^{j_1+j_2})$ choices for $(\xi_1,\xi_2)$. Once this pair is fixed, the condition $Q\in\cQ_0^{a,b}$ forces $\xi_3$ to lie on the unique affine line through $\xi_2$ whose direction is $B_{a,b}$-orthogonal to $\xi_1-\xi_2$. The hypothesis gives $O(2^{a_3})$ possible choices for $\xi_3$. Finally $\xi_4$ is determined by the parallelogram relation.
\end{proof}

\begin{lemma}[Medium-line count]\label{lemma:Medium-line}
Let $a_{1},a_{2},a_{3},a_{4},j_{1},j_{2},j_{3},j_{4}$ be nonnegative integers, and let
$S_{j_i}\subset\Z^{2}$ satisfy $\#S_{j_i}\le2^{j_i}$.
Assume that $-ab$ is not a square and
\begin{align*}
        1\le a_{1}<j_{1}/2+C.
\end{align*}
Let $\cQ_{0}^{a,b}(\vec j,\vec a)$ be the set of $B_{a,b}$-rectangles
with four distinct vertices, $\xi_i\in S_{j_i}$, such that, with
cyclic indices,
\begin{equation}\label{equ:shell}
2^{a_i}\le\max\lr{
r_{S_{j_i}}(\overleftrightarrow{\xi_i\xi_{i-1}}),
r_{S_{j_i}}(\overleftrightarrow{\xi_i\xi_{i+1}})
}<2^{a_i+1}
\quad(i=1,2,3,4),
\end{equation}
where $r_{S}(l)$ is defined by \eqref{equ:rS,number}.
Then we have
\begin{align}\label{equ:Medium,count,a4}
 \#\cQ_{0}^{a,b}(\vec j,\vec a) \lesssim
        2^{2j_{1}-2a_{1}+a_{2}+a_{4}}
\end{align}
and also
\begin{align}\label{equ:Medium,count,a3}
 \#\cQ_{0}^{a,b}(\vec j,\vec a) \lesssim
        2^{2j_{1}-2a_{1}+a_{2}+a_{3}}.
\end{align}
\end{lemma}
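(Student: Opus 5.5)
The plan follows Herr--Kwak \cite[Lemma 3.4]{HK24}. We first select a rich line through $\xi_{1}$ together with $\xi_{1}$ itself, using the Szemer\'edi--Trotter bound \eqref{equ:rich}. We then walk around the rectangle, using Lemma~\ref{lemma:Borth} to fix each successive side line. The richness bounds \eqref{equ:shell} at the later vertices bound the number of points on each such line.

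\emph{Step 1: the rich line at $\xi_{1}$.} By \eqref{equ:shell}, one of the two side lines through $\xi_{1}$, call it $\ell$, satisfies $2^{a_{1}}\le r_{S_{j_{1}}}(\ell)<2^{a_{1}+1}$. Since $a_{1}\ge1$, we may apply \eqref{equ:rich} with $K=2^{a_{1}}\ge2$. The number of such lines is
\begin{align*}
\lesssim 2^{2j_{1}-3a_{1}}+2^{j_{1}-a_{1}}\lesssim 2^{2j_{1}-3a_{1}}.
\end{align*}
The last inequality uses $2^{j_{1}-a_{1}}\le 2^{2C}\,2^{2j_{1}-3a_{1}}$, which is equivalent to $2a_{1}\le j_{1}+2C$. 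This follows from the hypothesis $a_{1}<j_{1}/2+C$, and the constant $2^{2C}$ is absolute. Given $\ell$, there are fewer than $2^{a_{1}+1}$ choices of $\xi_{1}\in\ell\cap S_{j_{1}}$. Hence the pairs $(\ell,\xi_{1})$ number $O(2^{2j_{1}-2a_{1}})$.

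\emph{Step 2: two cases.} Either $\ell=\overleftrightarrow{\xi_{1}\xi_{2}}$ or $\ell=\overleftrightarrow{\xi_{1}\xi_{4}}$.

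In the first case, $\xi_{2}$ lies on $\ell$. Since $\ell$ is also a side line at $\xi_{2}$, \eqref{equ:shell} gives $r_{S_{j_{2}}}(\ell)<2^{a_{2}+1}$, so there are $O(2^{a_{2}})$ choices of $\xi_{2}$. With $\xi_{1},\xi_{2}$ fixed, Lemma~\ref{lemma:Borth} shows that the line through $\xi_{1}$ with direction $(\xi_{2}-\xi_{1})^{\perp_{a,b}}$ is the unique admissible line $\overleftrightarrow{\xi_{1}\xi_{4}}$. Similarly, the parallel line through $\xi_{2}$ is $\overleftrightarrow{\xi_{2}\xi_{3}}$.
\begin{itemize}
\item For \eqref{equ:Medium,count,a4}, the bound $r_{S_{j_{4}}}(\overleftrightarrow{\xi_{4}\xi_{1}})<2^{a_{4}+1}$ from \eqref{equ:shell} leaves $O(2^{a_{4}})$ choices for $\xi_{4}$.
\item For \eqref{equ:Medium,count,a3}, the bound $r_{S_{j_{3}}}(\overleftrightarrow{\xi_{3}\xi_{2}})<2^{a_{3}+1}$ leaves $O(2^{a_{3}})$ choices for $\xi_{3}$.
\end{itemize}
In either case the remaining vertex is determined by \eqref{equ:para}.

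The second case is symmetric. Now $\xi_{4}\in\ell$ gives $O(2^{a_{4}})$ choices. The orthogonal line through $\xi_{1}$ then carries $\xi_{2}$, with $O(2^{a_{2}})$ choices. For \eqref{equ:Medium,count,a3}, we instead place $\xi_{3}$ on the orthogonal line through $\xi_{4}$, which is $\overleftrightarrow{\xi_{3}\xi_{4}}$, with $O(2^{a_{3}})$ choices. We would then use the remaining factor $2^{a_{4}}$, or adjust the order of choices so that the roles of $a_{2}$ and $a_{4}$ are interchangeable. This is the bookkeeping point to check.

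\emph{Main obstacle.} The delicate point is Step 1, namely absorbing the linear Szemer\'edi--Trotter term $\#S/K$ into the main term. This is exactly where the medium-range hypothesis $a_{1}<j_{1}/2+C$ enters.

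The other place needing care is the use of Lemma~\ref{lemma:Borth} in place of Euclidean orthogonality. Part (3) uses the nonsquare assumption. It guarantees that the $B_{a,b}$-orthogonal direction is independent of the given side, so the second side line through a vertex is a genuine line distinct from $\ell$. Part (1) makes that line unique once one side is fixed. With these facts, the rest is the Herr--Kwak counting verbatim.
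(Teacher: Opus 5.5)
Your argument is essentially the paper's: Szemer\'edi--Trotter with $K=2^{a_1}$, absorption of the linear term using $a_1<j_1/2+C$, uniqueness of the $B_{a,b}$-orthogonal line through $\xi_1$, and then the upper bounds in \eqref{equ:shell} at the later vertices. Step 1 is correct. Your first case is complete for both estimates. Your second case is complete for \eqref{equ:Medium,count,a4}.

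The one loose end is the one you flagged: the second case of \eqref{equ:Medium,count,a3}. Neither repair you suggest closes it as written. Choosing $\xi_4\in\ell$ and then $\xi_3$ on the orthogonal line through $\xi_4$ gives $2^{2j_1-2a_1+a_3+a_4}$. Using the relabeling $\xi_2\leftrightarrow\xi_4$, which turns your second case into your first, also only yields $a_3+a_4$, not $a_2+a_3$.

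The fix is to observe that the orthogonal line $\ell'$ through $\xi_1$ is already fixed by $(\ell,\xi_1)$, because its direction depends only on the direction of $\ell$. In the second case $\ell'=\overleftrightarrow{\xi_1\xi_2}$, so:
\begin{itemize}
\item choose $\xi_2\in\ell'$ first, with $O(2^{a_2})$ choices by the upper bound at $\xi_2$;
\item then $\xi_3$ lies on the line through $\xi_2$ parallel to $\ell$, namely $\overleftrightarrow{\xi_2\xi_3}$, giving $O(2^{a_3})$ choices by the upper bound at $\xi_3$;
\item finally $\xi_4=\xi_1+\xi_3-\xi_2$ by \eqref{equ:para}.
\end{itemize}

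This is why the paper counts crosses $(\xi_1,\ell,\ell')$ rather than rich lines. There are $O(2^{2j_1-2a_1})$ such crosses. Once a cross is fixed, $\xi_2$ lies on one of its two lines, costing at most a factor $2$. So no case split on which side at $\xi_1$ is the rich one is ever needed.
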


\begin{proof}
By Szemer\'edi--Trotter theorem in \eqref{equ:rich}, the number of possible lines at $\xi_1$,
which meet $S_{j_1}$ in between $2^{a_1}$ and $2^{a_1+1}$ points,
is
\[
        O(2^{2j_1-3a_1}+2^{j_1-a_1})
        =
        O_C(2^{2j_1-3a_1}),
\]
where we used $a_1<j_1/2+C$. Each such line contains
$O(2^{a_1})$ points of $S_{j_1}$. After choosing $\xi_1$, its
other side line is the unique line through $\xi_1$ in the
$B_{a,b}$-orthogonal direction. Hence there are
$O_C(2^{2j_1-2a_1})$ possible crosses at $\xi_1$.

Once the cross is fixed, the upper bounds in
\eqref{equ:shell} give $O(2^{a_2})$ choices for $\xi_2$
on its side line and $O(2^{a_4})$ choices for $\xi_4$ on the
other side line; $\xi_3$ is then determined by the parallelogram
relation. This proves the first estimate \eqref{equ:Medium,count,a4}. Alternatively, after choosing
$\xi_2$, choose $\xi_3$ on the line through $\xi_2$ parallel
to the second side. The upper bound at $\xi_3$ gives
$O(2^{a_3})$ choices, and $\xi_4$ is then determined. This proves
the second estimate \eqref{equ:Medium,count,a3}.  
\end{proof}

\begin{lemma}[Divisor-weighted side count]\label{lemma:divisor side}
With the notation of Lemma \ref{lemma:Medium-line},
we have
\begin{align*}
\sum_{Q\in\cQ_{0}^{a,b}(\vec j,\vec a)}\frac1{d_{a,b}(\xi_{1}-\xi_{4})}\lesssim2^{2j_{1}-2a_{1}+a_{2}+a_{4}/2}.
\end{align*}
\end{lemma}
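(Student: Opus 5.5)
Following the first count in Lemma~\ref{lemma:Medium-line}, I would first fix the cross at $\xi_1$. By the Szemer\'edi--Trotter bound \eqref{equ:rich} together with $a_1<j_1/2+C$, there are $O_C(2^{2j_1-2a_1})$ such crosses: there are $O(2^{2j_1-3a_1})$ rich-enough lines, each carrying $O(2^{a_1})$ points $\xi_1$, and the second side line at $\xi_1$ is then forced by Lemma~\ref{lemma:Borth}(1). Next I would choose $\xi_2$ on its side line, which gives $O(2^{a_2})$ choices by \eqref{equ:shell}. The vertex $\xi_3$ is determined by $\xi_4$ through the parallelogram relation. So the claim reduces to the following: for a fixed cross $(\xi_1,\ell,\ell')$ with $\xi_4\in\ell'\cap S_{j_4}$, $\xi_4\ne\xi_1$ and $\#(\ell'\cap S_{j_4})<2^{a_4+1}$, we need
\begin{align*}
\sum_{\xi_4}\frac1{d_{a,b}(\xi_1-\xi_4)}\lesssim 2^{a_4/2}.
\end{align*}

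To prove this, I would parametrize the side. The direction of $\ell'$ is $B_{a,b}$-orthogonal to $\xi_2-\xi_1$, so by Lemma~\ref{lemma:Borth}(1) it is spanned by a primitive vector $v$. Thus $\xi_4-\xi_1=nv$ with $n\in\Z\setminus\{0\}$, and at most $2^{a_4+1}$ values of $n$ occur. Since $v$ is primitive, $d_{a,b}(nv)=|n|\,d_{a,b}(v)\ge|n|$. Hence the sum is at most $\sum_{n\in F}|n|^{-1}$ over a set $F$ of nonzero integers with $\#F\le2^{a_4+1}$. This gives $\lesssim 1+a_4$, which is far better than the claimed $2^{a_4/2}$ (indeed $a_4\lesssim2^{a_4/2}$). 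Multiplying the three factors yields $2^{2j_1-2a_1}\cdot2^{a_2}\cdot2^{a_4/2}$.

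The one step I would double-check is the homogeneity $d_{a,b}(nv)=|n|d_{a,b}(v)$. It holds because $d_{a,b}(nv)=\gcd(|anv_1|,|bnv_2|)=|n|\gcd(|av_1|,|bv_2|)$, and this remains valid if a coordinate of $v$ vanishes. Also $d_{a,b}(v)\ge1$ since $v\ne0$. The nonsquare hypothesis is not needed here beyond its role in Lemma~\ref{lemma:Medium-line}: there, independence of the two sides from Lemma~\ref{lemma:Borth}(3) ensures the cross is genuine. I therefore expect no real obstacle, and the stated exponent $a_4/2$ leaves room (the crude bound $\sum_{n\le 2^{a_4+1}} n^{-1}\lesssim a_4$ suffices).
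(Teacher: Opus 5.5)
Your proposal is correct and follows essentially the same route as the paper's proof. The paper also counts crosses at $\xi_1$, then $\xi_2$ via \eqref{equ:shell}, and then parametrizes $\xi_4=\xi_1+n v$ with $v$ primitive. It uses $d_{a,b}(nv)=|n|\,d_{a,b}(v)\ge|n|$ to bound the remaining divisor-weighted sum by a harmonic sum of size $O(a_4+1)\lesssim 2^{a_4/2}$.
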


\begin{proof}
As in the proof of Lemma~\ref{lemma:Medium-line}, there are
$O_C(2^{2j_{1}-2a_{1}})$ choices of a cross
$(\xi_{1},l_{12},l_{14})$. For each such cross, the condition
\eqref{equ:shell} at $\xi_{2}$ gives $O(2^{a_{2}})$ admissible choices
of $\xi_{2}\in l_{12}\cap S_{j_{2}}$. Thus, there are at most
$O_C(2^{2j_{1}-2a_{1}+a_{2}})$ choices of the cross together with
$\xi_{2}$.

Fix one such choice. The remaining choice is a point
$\xi_{4}\in l_{14}\cap S_{j_{4}}$, since the parallelogram relation
\eqref{equ:para} uniquely determines
\begin{align*}
        \xi_{3}=\xi_{2}+\xi_{4}-\xi_{1}.
\end{align*}
Hence each admissible $\xi_{4}$ determines exactly one rectangle $Q$.
If there are no admissible choices, their contribution is zero.
Otherwise, by \eqref{equ:shell} at $\xi_{4}$, their number $M$ satisfies
$1\le M<2^{a_{4}+1}$. Choose a primitive integer vector $v$ parallel
to $l_{14}$ and write these points as
\begin{align*}
        \xi_{4}^{(r)}=\xi_{1}+n_r v,\quad 1\le r\le M,
\end{align*}
where the integers $n_r\ne0$ are distinct. Relabel them so that
$|n_1|\le\cdots\le|n_M|$. Since each positive integer occurs at most
twice among the absolute values $|n_r|$, we have $|n_r|\ge r/2$.
Moreover,
\begin{align*}
        d_{a,b}(\xi_{1}-\xi_{4}^{(r)})
        =|n_r|\,d_{a,b}(v),\quad d_{a,b}(v)\ge1.
\end{align*}
Consequently, the total divisor weight for this fixed cross and
$\xi_{2}$ is bounded by
\begin{align*}
 \sum_{r=1}^{M}\frac1{d_{a,b}(\xi_{1}-\xi_{4}^{(r)})}
 &\le \frac{2}{d_{a,b}(v)}\sum_{r=1}^{M}\frac1r
 \lesssim a_{4}+1
 \lesssim 2^{a_{4}/2}.
\end{align*}
Summing over the $O_C(2^{2j_{1}-2a_{1}+a_{2}})$ choices of the
cross together with $\xi_{2}$ gives
\begin{align*}
 \sum_{Q\in\cQ_{0}^{a,b}(\vec j,\vec a)}
 \frac1{d_{a,b}(\xi_{1}-\xi_{4})}
 \lesssim_C
 2^{2j_{1}-2a_{1}+a_{2}}\,2^{a_{4}/2},
\end{align*}
as required. 
\end{proof}

\begin{remark}
\rm 
Lemmas 3.4–3.6 show that, in the nonsquare case, i.e. $-ab$ is a non-square, the basic counting structure of Herr–Kwak is preserved when Euclidean rectangles are replaced by $B_{a,b}$-rectangles. With these ingredients in place, the reduction of the Strichartz estimates to counting problems and the subsequent summation arguments follow the same scheme. For completeness, we detail the full counting argument and the resulting Strichartz estimates in the following subsections, and present the reduction to typed resonant rectangles and the associated estimates in Appendix A.
\end{remark}

\subsubsection{The full counting argument}

The following is one of the main ingredients for the proof of Theorem \ref{thm:sharp,short-time,L4}.
It is an analogue of \cite[Proposition 3.1]{HK24}.

\begin{proposition}[Dyadic counting proposition]\label{prop:count}
Let
\begin{align*}
f=\sum_{j=0}^m\lambda_j2^{-j/2}\bfone_{S_j},
\end{align*}
where $m\ge1$, the sets $S_{0},\ldots,S_m\subset\Z^{2}$ are pairwise
disjoint, $\#S_j\le2^j$, and $\lambda_j\ge0$. Assume that for every
$\xi\in S_j$, there exists at most one line $\ell\ni\xi$ such that
\begin{align*}
\#(\ell\cap S_j)\ge2^{j/2+C}.
\end{align*}
If $-ab$ is not a square, then
\begin{equation}\label{equ:dy0}
\sum_{Q\in\cQ_{0}^{a,b}} f(Q)\lesssimab m\n{\lambda}_{\ell^{2}}^{4}
\end{equation}
and
\begin{equation}\label{equ:dyM}
\sup_{M\in2^\N}\frac1M\sum_{\tau\sim M}\sum_{Q\in\cQ_\tau^{a,b}}f(Q)\lesssimab
 \n{\lambda}_{\ell^{2}}^{4}.
\end{equation}
Here, $\tau\sim M$ means $M\le\tau<2M$.
\end{proposition}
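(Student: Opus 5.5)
Expanding $f(Q)$ by multilinearity reduces the proposition to estimating
\[
\lambda_{j_1}\lambda_{j_2}\lambda_{j_3}\lambda_{j_4}\,2^{-(j_1+j_2+j_3+j_4)/2}\,\#\bigl(\cQ_\tau^{a,b}\cap(S_{j_1}\times S_{j_2}\times S_{j_3}\times S_{j_4})\bigr),
\]
summed over $\vec j\in\{0,\dots,m\}^4$, as in \cite[Proposition 3.1]{HK24}. Degenerate parallelograms are handled first. If two adjacent vertices coincide, say $\xi_1=\xi_2$, then $\xi_3=\xi_4$ and the modulation is zero; the count is $\#S_{j_1}\cdot\#S_{j_3}$. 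A parallelogram with $\xi_1=\xi_3$ forces $\xi_2+\xi_4=2\xi_1$, and by \eqref{equ:res} the modulation is $2B_{a,b}(\xi_1-\xi_2,\xi_1-\xi_4)=-2B_{a,b}(v,v)$ with $v=\xi_1-\xi_2$. In the nonsquare case this vanishes only for $v=0$, by Lemma~\ref{lemma:Borth}(3). Both degenerate contributions are therefore bounded by $\|\lambda\|_{\ell^2}^4$ via Cauchy--Schwarz, and it remains to treat $B_{a,b}$-rectangles with four distinct vertices. By symmetry (cyclic relabeling and reflection), I may assume that $j_1$ is maximal, or more precisely that the vertex carrying the governing type is labelled $\xi_1$, exactly as in Herr--Kwak.

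For \eqref{equ:dy0}, I would classify each vertex $\xi_i$ by its dyadic richness $a_i$ as in \eqref{equ:shell}, which splits it into type 1, 2 or 3. There are three cases.

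\emph{Type 1 vertex.} By hypothesis, a type-1 vertex $\xi_i$ lies on a unique rich line. That line then determines the cross at $\xi_i$ through the $B_{a,b}$-orthogonal direction of Lemma~\ref{lemma:Borth}(1), and by independence (part (3)) the two crosses at opposite vertices meet in one point. So fixing an opposite pair, say $(\xi_1,\xi_3)$ with both rich, determines the rectangle up to $O(1)$ choices. If only one vertex is rich, its rich line is the fixed side, and I use Lemma~\ref{lemma:corner} along the other side. This gives counts like $2^{j_1+j_3}$, which after weights and Cauchy--Schwarz sum to $O(\|\lambda\|^4)$ with no loss.

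\emph{All vertices type 3.} Lemma~\ref{lemma:corner} with $a_3=0$ gives $2^{j_1+j_2}$. Pairing this with the symmetric count $2^{j_3+j_4}$ by geometric averaging, the bound becomes $2^{(j_1+j_2+j_3+j_4)/2}$. That cancels the weights, but a priori leaves $\sum_{\vec j}\lambda_{j_1}\cdots\lambda_{j_4}$, which is too large. The fix is to use the minimum of the available counts, $\min\{2^{j_1+j_2},2^{j_1+j_4},2^{j_3+j_2},\ldots\}$, which produces geometric decay in $|j_1-j_3|$ and $|j_2-j_4|$ and sums to $\|\lambda\|^4$.

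\emph{Type 2 vertices.} This is where the single factor $m$ arises. With $\xi_1$ of type 2 and $1\le a_1<j_1/2+C$, Lemma~\ref{lemma:Medium-line} gives $2^{2j_1-2a_1+a_2+a_4}$ and $2^{2j_1-2a_1+a_2+a_3}$. Combining these with the one-corner bounds $2^{j_2+j_3+a_4}$ and similar, by geometric means, gives something like $2^{(j_1+j_2+j_3+j_4)/2}$ times decaying factors in the $a_i$ and in the gaps between the $j_i$. The sum over $a_1\le j_1/2+C$, or over one free scale, leaves at most $O(m)$.

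I expect the main obstacle to be this bookkeeping: finding, for each type pattern, the right convex combination of the Lemmas~\ref{lemma:corner}--\ref{lemma:Medium-line} bounds so that exactly one sum is unconditioned. I would follow \cite[Section 3 and Appendix]{HK24} verbatim, checking only that each step uses the three facts listed in the Remark after Lemma~\ref{lemma:Borth}.

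For \eqref{equ:dyM}, fix a nonzero side $\xi=\xi_1-\xi_2$. By \eqref{equ:res} and Lemma~\ref{lemma:Borth}(2), $\tau=2|B_{a,b}(\xi,\xi_1-\xi_4)|$ is a multiple of $2d_{a,b}(\xi)$. For each fixed $\tau$, the admissible $\xi_4$ lie on a single line parallel to $\xi^{\perp_{a,b}}$, so each level $\tau$ is a parallel-translated copy of the resonant count. Summing over $\tau\sim M$ counts about $M/d_{a,b}$ parallel lines. Dividing by $M$ therefore yields the divisor-weighted counts of Lemma~\ref{lemma:divisor side}, where the weight is $1/d_{a,b}(\xi_1-\xi_4)$, with the harmonic sum replaced by $2^{a_4/2}$. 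The type analysis then runs as before. Because the exponent $a_4/2$ is strictly less than $a_4$, every summation becomes geometric, which removes the factor $m$ and gives \eqref{equ:dyM}.
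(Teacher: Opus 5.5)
Your handling of degenerate parallelograms and your outline of the all-medium and all-poor counts agree with the paper. The proposal, however, omits the step the paper's proof rests on: the symmetrizing reduction of Lemma~\ref{lemma:type,red}. Both of your remaining arguments fail without it. First, because you type each vertex independently, you must also treat patterns such as $(1,2,1,2)$ or $(1,2,2,2)$, where the rich vertices lie in a single opposite pair. There, fixing $(\xi_1,\xi_3)$ gives the count $O(2^{j_1+j_3})$, which after the weights leaves $2^{(j_1+j_3-j_2-j_4)/2}$; this does not sum. In the Cauchy--Schwarz form $\bigl(\sum_Q f(\xi_1)^2f(\xi_3)^2\bigr)^{1/2}\bigl(\sum_Q f(\xi_2)^2f(\xi_4)^2\bigr)^{1/2}$, only the first factor is $O(\n f_{\ell^2}^4)$. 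With $\xi_2,\xi_4$ fixed and neither of them rich, the number of admissible $\xi_1$ is controlled only by the number of rich lines of $S_{j_1}$ through $\xi_2$ or $\xi_4$, and you have no $O(1)$ bound for that. You also give no mechanism for summing the free richness indices of the medium vertices in such patterns without losing powers of $m$. The paper's Case~1 works only because, after the reduction, $\xi_1$ and $\xi_2$ are \emph{both} rich, so each opposite pair contains a rich vertex.

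Second, your argument for \eqref{equ:dyM} does not work as stated. For $\tau\neq0$, elements of $\cQ_\tau^{a,b}$ are not $B_{a,b}$-rectangles: the sides at a vertex do not form an $(a,b)$-cross, and the line carrying $\xi_4$ at level $\tau$ does not pass through $\xi_1$. Since the $S_j$ are also not translation invariant, level-$\tau$ configurations are not translated copies of resonant ones. Lemmas~\ref{lemma:corner}--\ref{lemma:divisor side} therefore do not apply to them.

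The missing idea runs as follows. Fix $\xi=\xi_1-\xi_4$ and group segments into levels $E_\xi^\sigma=\{(\eta,\eta-\xi):B_{a,b}(\eta,\xi)=\sigma\}$, with $A_\xi(\sigma)=\sum_{E_\xi^\sigma}f(\eta)f(\eta-\xi)$. Opposite sides of a modulation-$\tau$ parallelogram lie in levels differing by $\tau/2$, so Cauchy--Schwarz bounds its contribution by $\sum_\sigma A_\xi(\sigma)^2$. That sum runs over pairs of segments on a \emph{common} level, which are genuine $B_{a,b}$-rectangles with $\xi_1-\xi_4=\xi$, and such a class is nonempty only if $d_{a,b}(\xi)\mid\tau$. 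Now split each level into nine classes by the types of the crosses $(\eta;\xi,\xi^{\perp_{a,b}})$ and $(\eta-\xi;\xi,\xi^{\perp_{a,b}})$, and use $(\sum_{i=1}^9x_i)^2\le9\sum_i x_i^2$. This forces $\xi_1,\xi_2$ to share a type and $\xi_3,\xi_4$ to share a type, for $\tau=0$ as well.

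Only after this step does summing over $\tau\sim M$ give the multiplicity $\lesssim M/d_{a,b}(\xi_1-\xi_4)$, which is the weight that Lemma~\ref{lemma:divisor side} handles. Even then, the observation $a_4/2<a_4$ is not enough on its own. You need the improved exponent inside the geometric mean of the eight medium-line bounds, which yields the extra factor $2^{-a_4/16}$ on the $(2,2)$ class. All other classes are handled using only $d_{a,b}\ge1$.
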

\begin{proof}
For $\tau=0$, Lemmas~\ref{lemma:type,red} and~\ref{lemma:type,est},
proved in Appendix~\ref{appendix:Typed-rectangle}, give
\begin{align*}
 \sum_{Q\in\cQ_{0}^{a,b}}f(Q) \lesssimab& \max_{\alpha,\beta=1,2,3}
\sum_{\substack{Q\in\cQ_{0,\alpha,\beta}^{a,b}}}f(Q) + \n f_{\ell^{2}(\Z^{2})}^{4}\\
\lesssim& m\n{\lambda}_{\ell^{2}}^{4}+\n f_{\ell^{2}}^{4}
        \lesssim m\n{\lambda}_{\ell^{2}}^{4},
\end{align*}
where in the last inequality we have used that $\n f_{\ell^{2}}\le\n{\lambda}_{\ell^{2}}$ and $m\ge1$.

For \eqref{equ:dyM}, using Lemma \ref{lemma:type,red} again, we have
\begin{align}
\frac{1}{M}\sum_{\tau\sim M}\sum_{Q\in\cQ_\tau^{a,b}}f(Q)
\lesssimab&
\frac{1}{M}\sum_{\tau\sim M} \lrc{\max_{\alpha,\beta=1,2,3}
\sum_{\substack{Q\in\cQ_{0,\alpha,\beta}^{a,b}\notag\\
d_{a,b}(\xi_{1}-\xi_{4})\mid \tau}}f(Q) + \n f_{\ell^{2}(\Z^{2})}^{4}}\\
\lesssim& \max_{\alpha,\beta=1,2,3}\frac{1}{M}\sum_{\tau\sim M}
\sum_{\substack{Q\in\cQ_{0,\alpha,\beta}^{a,b}\notag\\
d_{a,b}(\xi_{1}-\xi_{4})\mid \tau}}f(Q) + \n f_{\ell^{2}(\Z^{2})}^{4}\notag\\
\lesssim& \max_{\al,\be=1,2,3}\sum_{Q\in Q_{0,\al,\be}^{a,b}}\frac{1}{d_{a,b}(\xi_{1}-\xi_{4})}f(Q)+
\n f_{\ell^{2}(\Z^{2})}^{4},\label{equ:tau,M,dab}
\end{align}
where in the last inequality we have used the following estimate
\begin{align*}
\frac{1}{M}\#\lr{\tau\in\Z:\tau\sim M,\ d\mid\tau}
\leq \frac{1}{M}\lr{\frac{M}{d}+1}\lesssim \frac{1}{d},
\end{align*}
provided $M \gtrsim d$.

For type pairs $(\al,\be)\neq (2,2)$, we substitute $d_{a,b}\ge1$
together with \eqref{equ:type,off} into \eqref{equ:tau,M,dab}. In the case $(\al,\be)=(2,2)$, we substitute the weighted estimate \eqref{equ:type22-div} into \eqref{equ:tau,M,dab}. Therefore, we complete the proof of \eqref{equ:dyM}.
\end{proof}

\Needspace{6\baselineskip}
\subsection{The Strichartz estimates}\label{sub:str}
We now prove the upper bounds in Theorems~\ref{thm:sharp,Strichartz,L4}
and~\ref{thm:sharp,short-time,L4}. The matching lower bounds and optimality of the time scales
are proved in Subsection~\ref{sub:sharp}.

\subsubsection{The nonsquare case}
The reduction below is the level-set argument from
\cite[Proof of Theorem 1.2]{HK24}. We recall it to indicate precisely
where the $B_{a,b}$-adapted counting estimate in Proposition
\ref{prop:count} enters.

\begin{proof}[Proof of the nonsquare case of Theorem \ref{thm:sharp,short-time,L4}]
By decomposing the real and imaginary parts of the Fourier coefficients
into their positive and negative parts and applying the triangle
inequality, it suffices to consider nonnegative Fourier coefficients.

Let $f=\widehat{P_{\le N}\phi}\ge0$, $m=\lfloor\log_{2}\#A_{N}\rfloor$, and $f_{0}=f$. Given $f_n$,
we enumerate all points of $A_{N}$, with an ordering which may depend on
$n$, so that
\begin{align*}
 f_n(\xi_{1}^{(n)})\ge f_n(\xi_{2}^{(n)})\ge\cdots
\end{align*}
and define
\begin{align*}
S_j^{(n)}=\{\xi_\ell^{(n)}:2^j\le\ell\le \min(2^{j+1}-1,\#A_{N})\},\quad
 \lambda_j^{(n)}=2^{j/2}f_n(\xi_{2^j}^{(n)}),\quad 0\le j\le m.
\end{align*}
Then from the above construction, we obtain
\begin{equation}\label{equ:lambda}
\n{\lambda^{(n)}}_{\ell^{2}}\lesssim\n{f_n}_{\ell^{2}}.
\end{equation}
% Indeed, $(\lambda_{0}^{(n)})^{2}=f_n(\xi_{1}^{(n)})^{2}$, while for
% $j\ge1$,
% \begin{align*}
% 2^j f_n(\xi_{2^j}^{(n)})^{2}\le 2\sum_{\ell=2^{j-1}}^{2^j-1}f_n(\xi_\ell^{(n)})^{2}.
% \end{align*}
% Summing in $j$ proves \eqref{equ:lambda}.

Let $\mathcal L_j^{(n)}$ be the family of lines containing at least
$2^{j/2+C}$ points of $S_j^{(n)}$, and let
$E_j^{(n)}\subset S_j^{(n)}$ be the points incident to two distinct
lines of $\mathcal L_j^{(n)}$. By Szemer\'edi--Trotter theorem in \eqref{equ:rich}, we get
\begin{align*}
\#\mathcal L_j^{(n)}
\lesssim
2^{j/2-3C}+2^{j/2-C}
\lesssim 2^{j/2-C}.
\end{align*}
Every point of $E_j^{(n)}$ determines a pair of distinct lines in
$\mathcal L_j^{(n)}$, and a pair of distinct lines has at most one
point of intersection. Therefore, we get
\begin{align*}
\#E_j^{(n)}\le \binom{\#\mathcal L_j^{(n)}}2
\lesssim 2^{j-2C}.
\end{align*}
Set
\begin{align*}
 f_{n+1}=f_n\bfone_{\cup_jE_j^{(n)}}.
\end{align*}
The monotone ordering gives
$f_n(\xi)\le2^{-j/2}\lambda_j^{(n)}$ on $S_j^{(n)}$. Hence, it holds that
\begin{align*}
\begin{aligned}
\n{f_{n+1}}_{\ell^{2}}^{2}
&=\sum_{j=0}^m\sum_{\xi\in E_j^{(n)}}f_n(\xi)^{2}\\
&\lesssim
\sum_{j=0}^m 2^{j-2C}
\left(2^{-j/2}\lambda_j^{(n)}\right)^{2}\\
&\lesssim
2^{-2C}\n{\lambda^{(n)}}_{\ell^{2}}^{2}
\lesssim
2^{-2C}\n{f_n}_{\ell^{2}}^{2}.
\end{aligned}
\end{align*}
After fixing $C$ sufficiently large, we have
\begin{align*}
\n{f_{n+1}}_{\ell^{2}}\le\frac12\n{f_n}_{\ell^{2}}.
\end{align*}
Then, we put $h_n=f_n-f_{n+1}$. The construction is telescoping and
$\n{f_n}_{\ell^{2}}\le2^{-n}\n f_{\ell^{2}}$, so
\begin{align*}
 f=\sum_{n=0}^\infty h_n \quad\text{in }\ell^{2}(\Z^{2}).
\end{align*}
Moreover, $h_n$ is pointwise dominated by
\begin{align*}
g_n=\sum_{j=0}^m\lambda_j^{(n)}2^{-j/2} \bfone_{S_j^{(n)}\setminus E_j^{(n)}}.
\end{align*}
The function $g_n$ satisfies Proposition
\ref{prop:count}. Indeed, if a point of
$S_j^{(n)}\setminus E_j^{(n)}$ lay on two distinct lines, each
containing at least $2^{j/2+C}$ points of the smaller set
$S_j^{(n)}\setminus E_j^{(n)}$, then both lines would lie in
$\mathcal L_j^{(n)}$, forcing that point to belong to
$E_j^{(n)}$, a contradiction.

Let us take $T=c/m$, where $0<c\le1$. Since $0\le h_n\le g_n$,
we have $h_n(Q)\le g_n(Q)$ for every parallelogram $Q$.
By Lemma~\ref{lemma:Oscillatory reduction} and \eqref{equ:dy0}, the zero-modulation
contribution is therefore
$O_{a,b}(Tm\n{\lambda^{(n)}}_{\ell^{2}}^{4})$.
For each dyadic $M\ge1$, \eqref{equ:dyM} gives
\begin{align*}
\begin{aligned}
\sum_{\tau\sim M}
\min\left\{T,\frac1{T\tau^{2}}\right\}
\sum_{Q\in\cQ_\tau^{a,b}}g_n(Q)
& \lesssimab
\min\left\{T,\frac1{TM^{2}}\right\}
M\n{\lambda^{(n)}}_{\ell^{2}}^{4}\\
& =
\min\{TM,(TM)^{-1}\}\n{\lambda^{(n)}}_{\ell^{2}}^{4}.
\end{aligned}
\end{align*}
Splitting the dyadic sum at $M\sim T^{-1}$, we have
\begin{align*}
        \sum_{M\in2^\N}\min\{TM,(TM)^{-1}\}\lesssim1.
\end{align*}
It follows that
\begin{align*}
\n{S(t)\mathcal F^{-1}h_n}_{L^{4}([0,T]\times\T^{2})}^{4}
\lesssimab
(Tm+1)\n{\lambda^{(n)}}_{\ell^{2}}^{4}.
\end{align*}
Since $Tm=c\le1$, we obtain
\begin{align*}
\n{S(t)\mathcal F^{-1}h_n}_{L^{4}([0,T]\times\T^{2})}
\lesssimab
\n{\lambda^{(n)}}_{\ell^{2}}
\lesssim
\n{f_n}_{\ell^{2}}.
\end{align*}
The series on the right is summable, so the partial sums of
$\sum_n S(t)\mathcal F^{-1}h_n$ converge in $L^{4}$.
By unitarity and the telescoping identity, they also converge to
$S(t)P_{\le N}\phi$ in $L^\infty([0,T];L^{2}(\T^{2}))$,
which identifies the $L^{4}$-limit. Minkowski's inequality then yields
\begin{align*}
\begin{aligned}
\n{S(t)P_{\le N}\phi}_{L^{4}([0,T]\times\T^{2})}
&\le
\sum_{n=0}^\infty
\n{S(t)\mathcal F^{-1}h_n}_{L^{4}([0,T]\times\T^{2})}\\
&\lesssimab
\sum_{n=0}^\infty\n{f_n}_{\ell^{2}}
\lesssim
\n{\phi}_{L^{2}}.
\end{aligned}
\end{align*}
Since $m\sim\log N$, decreasing the constant $c=c(a,b)$ in the
definition of $T_{N}$ if necessary proves \eqref{equ:sharp,short-time,L4}.
\end{proof}

\begin{remark}[Dependence on the size of the Fourier support]
\label{rem:cardinality}
Assume that $-ab$ is not a square. The proof above uses the frequency
set only through its cardinality. Thus, for every nonempty finite
$E\subset\Z^{2}$, the same argument,
with $m=\max\{1,\lfloor\log_{2}\#E\rfloor\}$, gives
\begin{align*}
 \n{S(t)P_E\phi}_{L^{4}(I\times\T^{2})}\lesssimab\n\phi_{L^{2}},
 \quad |I|\le\frac{c_{a,b}}{\log(2+\#E)}.
\end{align*}
For $\#E=1$, this follows directly from the single Fourier mode.
The estimate holds on arbitrary time translates of the interval,
since $S(t_{0})$ preserves the Fourier support and the $L^{2}$-norm.
Partitioning $[0,2\pi]$ yields the corresponding loss
$\bigl(\log(2+\#E)\bigr)^{1/4}$, independently of the location or
diameter of $E$.
\end{remark}

\subsubsection{The semiclassical estimate in the square case}
In the square case, when $-ab$ is a square, the fixed-time upper
bound in Theorem~\ref{thm:sharp,Strichartz,L4} follows from
Wang's estimate \cite[Theorem 2.1]{Wang13} and the rational-torus
covering argument of Guo--Oh--Wang \cite[Subsection 1.2]{GOW14}.
Here we prove the corresponding short-time bound in
Theorem~\ref{thm:sharp,short-time,L4} on intervals of length
$cN^{-1}$. We begin with a semiclassical kernel estimate
valid for every nondegenerate form \eqref{equ:disp}.

\begin{lemma}[Semiclassical kernel bound]
\label{lemma:semi}
Let $a,b\in\Z\setminus\{0\}$ and $\mu\in C_c^\infty(\R^{2})$.
For $N\ge1$, we define
\begin{align*}
K_{N}^\mu(t,x)
=\sum_{k\in\Z^{2}}\mu(k/N)e^{i(k\cdot x-t\omega(k))}.
\end{align*}
There exists $c=c(a,b,\mu)>0$ such that, uniformly for $N\ge1$
and $x\in\T^{2}$,
\begin{equation}\label{equ:semi-disp}
\abs{K_{N}^\mu(t,x)}
\lesssim_{a,b,\mu}
\min\{N^{2},\abs{t}^{-1}\},
\quad 0<\abs{t}\le cN^{-1}.
\end{equation}
\end{lemma}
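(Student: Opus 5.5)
The plan is to apply Poisson summation and reduce the kernel to a sum of rescaled Euclidean oscillatory integrals, most of which are negligible on the time scale $|t|\le cN^{-1}$. The trivial bound $|K_N^\mu|\lesssim_\mu N^{2}$ is immediate from the support of $\mu$, so only the bound $|t|^{-1}$ needs proof, and only when $|t|\ge N^{-2}$. By Poisson summation in $k\in\Z^{2}$ we write
\begin{align*}
K_N^\mu(t,x)=\sum_{n\in\Z^{2}}\int_{\R^{2}}\mu(\xi/N)e^{i(\xi\cdot(x+2\pi n)-t\omega(\xi))}\,d\xi
=N^{2}\sum_{n\in\Z^{2}}I\bigl(N(x+2\pi n),tN^{2}\bigr),
\end{align*}
where
\begin{align*}
I(y,s)=\int_{\R^{2}}\mu(\eta)e^{i(\eta\cdot y-s\omega(\eta))}\,d\eta .
\end{align*}
We use the representative $x\in[-\pi,\pi)^{2}$.

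For the term $n=0$, the phase $\eta\cdot y-s\omega(\eta)$ has constant nondegenerate Hessian $-2s\,\mathrm{diag}(a,b)$. Stationary phase in each variable then gives $|I(y,s)|\lesssim_{a,b,\mu}|s|^{-1}$ uniformly in $y$. The indefinite signature is harmless here, since the integral is a product of one-dimensional Fresnel-type integrals after a tensor decomposition of $\mu$. Alternatively one writes $\hat\mu$ convolved with the explicit Gaussian kernel, whose modulus is $\lesssim|s|^{-1}\|\hat\mu\|_{L^{1}}$. This yields $N^{2}|I|\lesssim N^{2}(tN^{2})^{-1}=|t|^{-1}$.

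For the terms $n\ne0$, we have $|y|=N|x+2\pi n|\gtrsim N|n|$. The gradient of the phase is $y-2s(a\eta_{1},b\eta_{2})$, and on $\operatorname{supp}\mu$ this is bounded by $2|s|\max(|a|,|b|)R\le 2cN\max(|a|,|b|)R$, where $R$ is the support radius and we use $|s|=|t|N^{2}\le cN$. Choosing $c$ small depending on $a,b,\mu$ makes the gradient $\ge|y|/2\gtrsim N|n|$. Repeated integration by parts, in which derivatives of the amplitude and of the phase Hessian contribute factors $\lesssim1+|s|\lesssim N$, gives $|I(y,s)|\lesssim_K (N|n|)^{-K}\cdot(\text{losses})$.

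The main obstacle is to control these losses carefully. Each integration by parts divides by $|\nabla\phi|\gtrsim N|n|$ and produces at most one Hessian factor $|s|\lesssim cN$ divided by a further $|\nabla\phi|$. Hence each step gains a factor $\lesssim(N|n|)^{-1}+|s|(N|n|)^{-2}\lesssim|n|^{-1}$, which is not enough on its own. The fix is to use the standard non-stationary phase bound with the operator $L=\frac{\nabla\phi}{i|\nabla\phi|^{2}}\cdot\nabla$. After $K$ steps the terms are bounded by $\sum_{j}|s|^{j}|\nabla\phi|^{-K-j}$, which is $\lesssim (N|n|)^{-K}(1+cN/(N|n|))^{K}\lesssim (N|n|)^{-K}$. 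Taking $K=3$, the $n\ne0$ terms contribute
\begin{align*}
N^{2}\sum_{n\ne0}(N|n|)^{-3}\lesssim N^{-1}\lesssim|t|^{-1},
\end{align*}
where the last step uses $|t|\le cN^{-1}$. Combining this with the $n=0$ term and the trivial bound proves \eqref{equ:semi-disp}.
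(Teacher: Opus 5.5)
Your proposal is correct and follows essentially the same route as the paper. Both arguments use Poisson summation into rescaled Euclidean oscillatory integrals, a signature-independent stationary-phase bound $\lesssim(|t|N^{2})^{-1}$ for the boundedly many near terms, and non-stationary phase for the rest, with $|t|\le cN^{-1}$ keeping the group-velocity term $2s(a\eta_{1},b\eta_{2})$ below the spatial separation. Your bookkeeping is slightly cleaner: fixing $x\in[-\pi,\pi)^{2}$ lets you treat only $n=0$ with the $y$-uniform Fresnel bound and all $n\ne0$ with gradient $\gtrsim N|n|$, whereas the paper splits at $|x+2\pi n|\le C_{0}|t|N$ and must separately handle far indices with $|x+2\pi n|\le1$.
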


\begin{proof}
We may take $c\le1$.
The trivial estimate gives $\abs{K_{N}^\mu(t,x)}\lesssim_\mu N^{2}$,
which proves \eqref{equ:semi-disp} when
$0<\abs{t}\le N^{-2}$. We therefore assume
\begin{align*}
N^{-2}\le\abs{t}\le cN^{-1}.
\end{align*}
By Poisson summation, up to harmless normalizing constants, we get
\begin{align*}
K_{N}^\mu(t,x)
=N^{2}\sum_{n\in\Z^{2}}I_n(t,x),
\end{align*}
where
\begin{align*}
I_n(t,x)
=\int_{\R^{2}}\mu(\eta)e^{i\Phi_n(\eta)}\,d\eta,
\quad
\Phi_n(\eta)
=N(x+2\pi n)\cdot\eta
-tN^{2}(a\eta_{1}^{2}+b\eta_{2}^{2}).
\end{align*}
Choose $C_{0}$ sufficiently large depending on $a,b$ and the
support of $\mu$. There are only $O(1)$ indices satisfying
\begin{align*}
\abs{x+2\pi n}\le C_{0}\abs{t}N.
\end{align*}
Set $D=\operatorname{diag}(a,b)$. For each such index, the critical point
\begin{align*}
\eta_n=\frac{D^{-1}(x+2\pi n)}{2tN}
\end{align*}
lies in a fixed bounded set. Completing the square gives
\begin{align*}
 \Phi_n(\eta)=\Phi_n(\eta_n)-tN^{2}(\eta-\eta_n)^TD(\eta-\eta_n).
\end{align*}
The translated amplitudes $\mu(\,\cdot+\eta_n)$ have uniformly bounded
supports and derivatives. Since $\det D=ab\ne0$ and
$\abs{t}N^{2}\ge1$, two-dimensional quadratic stationary phase applies
regardless of the signature of $D$ and yields
\begin{align*}
        \abs{I_n(t,x)}
        \lesssim_{a,b,\mu}(\abs{t}N^{2})^{-1}.
\end{align*}

For the remaining indices, since
$\abs{x+2\pi n}>C_{0}\abs{t}N$, we have
\begin{align*}
        \abs{\nabla_\eta\Phi_n(\eta)}
        \gtrsim N\abs{x+2\pi n}
        \quad(\eta\in\operatorname{supp}\mu).
\end{align*}
Repeated application of the integration-by-parts operator
\begin{align*}
L_n=\frac{\nabla_\eta\Phi_n\cdot\nabla_\eta} {i\abs{\nabla_\eta\Phi_n}^{2}},
        \quad L_ne^{i\Phi_n}=e^{i\Phi_n},
\end{align*}
gives, for every $L\ge1$,
\begin{align*}
 \abs{I_n(t,x)}\lesssim_{L,a,b,\mu}\bigl(N\abs{x+2\pi n}\bigr)^{-L},
\end{align*}
where we used
$\abs{x+2\pi n}\gtrsim\abs{t}N$ to control derivatives of the
coefficients of $L_n$. Fix $L>2$. Among the remaining indices,
those with $\abs{x+2\pi n}\le1$ are $O(1)$ in number and satisfy
$N\abs{x+2\pi n}\gtrsim\abs{t}N^{2}$. For those with
$\abs{x+2\pi n}>1$, the sum of $\abs{x+2\pi n}^{-L}$ is
uniformly bounded in $x$. Hence, we arrive at
\begin{align*}
N^{2}\sum_{\substack{n\in\Z^{2}\\
|x+2\pi n|>C_{0}|t|N}}\abs{I_n(t,x)}&\lesssim_{L,a,b,\mu}
        N^{2}(\abs{t}N^{2})^{-L}+N^{2-L}
        \lesssim\abs{t}^{-1},
\end{align*}
where we used $\abs{t}N^{2}\ge1$ and $\abs{t}\le1$.
Together with the contribution of the $O(1)$ near indices, this proves
\eqref{equ:semi-disp}.
\end{proof}

At frequencies of size $N$, the group velocity is $O(N)$, so wave
packets travel only a bounded distance over times of order $N^{-1}$.
Correspondingly, only a uniformly bounded number of translated Euclidean
kernels in the Poisson sum can have stationary points on the cutoff
support, while the remaining terms are summable by nonstationary phase.
Thus \eqref{equ:semi-disp} retains the Euclidean dispersive bound on this
time scale.

\begin{proof}[Proof of the square case of Theorem \ref{thm:sharp,short-time,L4}]
The argument below applies in fact to every nondegenerate form
\eqref{equ:disp}. Choose a real-valued
$\chi\in C_c^\infty(\R^{2})$ which equals one on $[-1,1]^{2}$, and
let $\widetilde P_{N}$ be the multiplier with symbol $\chi(k/N)$.
Set
\begin{align*}
U_{N}(t)=S(t)\widetilde P_{N}.
\end{align*}
The operator $U_{N}(t)U_{N}(s)^*$ has convolution kernel
\begin{align*}
\sum_{k\in\Z^{2}}\abs{\chi(k/N)}^{2}
e^{i(k\cdot x-(t-s)\omega(k))}.
\end{align*}
Applying Lemma \ref{lemma:semi} with
$\mu=\abs{\chi}^{2}$, we obtain, whenever
$0<\abs{t-s}\le cN^{-1}$,
\begin{align*}
\n{U_{N}(t)U_{N}(s)^*}_{L^{1}_x\to L^\infty_x}
\lesssimab \abs{t-s}^{-1}.
\end{align*}
On the other hand, $L^{2}$-unitarity of $S(t)$ and boundedness of
the cutoff give
\begin{align*}
\n{U_{N}(t)U_{N}(s)^*}_{L^{2}_x\to L^{2}_x}\lesssim1.
\end{align*}
Interpolation yields
\begin{equation}
\label{equ:TTx}
\n{U_{N}(t)U_{N}(s)^*}_{L^{4/3}_x\to L^{4}_x}
\lesssimab \abs{t-s}^{-1/2}.
\end{equation}

Let $I$ be any interval with $\abs I\le cN^{-1}$. By
\eqref{equ:TTx} and the one-dimensional
Hardy--Littlewood--Sobolev inequality,
\begin{align*}
\bbn{\int_IU_{N}(t)U_{N}(s)^*F(s)\,ds}_{L^{4}_{t,x}(I\times\T^{2})}
\lesssimab&
\bbn{\int_I\abs{t-s}^{-1/2}
\n{F(s)}_{L^{4/3}_x}\,ds}_{L^{4}_t(I)}\\
\lesssimab&
\n F_{L^{4/3}_{t,x}(I\times\T^{2})}.
\end{align*}
The $TT^*$ argument therefore gives
\begin{align*}
\n{S(t)\widetilde P_{N}\phi}_{L^{4}(I\times\T^{2})}\lesssimab \n{\phi}_{L^{2}}.
\end{align*}
Applying this to $\phi=P_{\le N}\phi$, for which
$\widetilde P_{N}\phi=\phi$, proves the square-case estimate in
\eqref{equ:sharp,short-time,L4}. Notice that the argument applies to every
time translate of $I$.
\end{proof}

\subsection{Sharpness of the estimates}\label{sub:sharp}
We establish matching lower bounds using box data and data supported
on a null line, then deduce the optimality of the short-time scales.
\begin{lemma}[The box lower bound]\label{lemma:box}
Assume $ab\ne0$. For every dyadic $N\ge2$, let
\begin{align*}
        \wh{\phi_{N}}(k)=\bfone_{A_{N}}(k).
\end{align*}
Then
\begin{align*}
\n{S(t)\phi_{N}}_{L^{4}([0,2\pi]\times\T^{2})} \gtrsimab (\log N)^{1/4}\n{\phi_{N}}_{L^{2}(\T^{2})}.
\end{align*}
\end{lemma}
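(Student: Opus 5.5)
Since $\omega$ is integer-valued, Parseval in time over $[0,2\pi]$ turns the left-hand side into a lattice count. Writing $u=S(t)\phi_N$, we have $|u|^2=\sum_{\xi,\eta\in A_N}e^{i((\xi-\eta)\cdot x-t(\omega(\xi)-\omega(\eta)))}$. Expanding $\n{u}_{L^4}^4=\n{|u|^2}_{L^2}^2$ and integrating over $[0,2\pi]\times\T^{2}$ gives
\begin{align*}
\n{S(t)\phi_{N}}_{L^{4}([0,2\pi]\times\T^{2})}^{4}\sim \#\bigl(\cQ_{0}^{a,b}\cap A_N^4\bigr),
\end{align*}
the number of $B_{a,b}$-rectangles with all vertices in $A_N$. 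Here we use the resonance identity \eqref{equ:res}: a vanishing modulation is exactly the condition $B_{a,b}(\xi_1-\xi_2,\xi_1-\xi_4)=0$. Since $\n{\phi_N}_{L^2}^4\sim N^4$, it therefore suffices to show
\begin{align*}
\#\bigl(\cQ_{0}^{a,b}\cap A_N^4\bigr)\gtrsim_{a,b} N^4\log N.
\end{align*}

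To produce these rectangles, parametrize a configuration by its vertex $\xi_1\in A_{N/2}$ and a side vector $v=\xi_2-\xi_1$. Orthogonality forces $\xi_4-\xi_1=n\,v^{\perp_{a,b}}$ for some $n\in\Z$, by Lemma~\ref{lemma:Borth}, and $\xi_3$ is then fixed by \eqref{equ:para}.

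Restrict to primitive $v$ with $|v|\le \varepsilon N$, where $\varepsilon=\varepsilon(a,b)$ is small. For such $v$ we have $|v^{\perp_{a,b}}|\le C_{a,b}|v|$, since $d_{a,b}(v)\ge 1$, and each such $v$ admits $\gtrsim N/|v|$ values of $n$ with $|n\,v^{\perp_{a,b}}|\le N/4$. All four vertices then stay in $A_N$. With $\sim N^2$ choices of $\xi_1$, the count is at least
\begin{align*}
N^2\sum_{\substack{v\ \text{primitive}\\ 1\le|v|\le\varepsilon N}}\frac{N}{C_{a,b}|v|}.
\end{align*}

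The step needing care is the lower bound on the number of choices of $n$. We need $|v^{\perp_{a,b}}|\lesssim_{a,b}|v|$, which holds trivially because $d_{a,b}(v)\ge1$. There is no divisor gain to exploit here; we only need an upper bound on the length of the orthogonal side. For the same reason the estimate does not distinguish the square and nonsquare cases, so it holds for all $ab\ne0$.

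The remaining sum is then handled by dyadic decomposition. Primitive vectors have positive density, so there are $\gtrsim R^2$ primitive $v$ with $|v|\sim R$. This gives
\begin{align*}
\sum_{|v|\le \varepsilon N}\frac{1}{|v|}\gtrsim \sum_{1\le R\le \varepsilon N\ \text{dyadic}} R \gtrsim N.
\end{align*}
That is not yet logarithmic, so we compute more carefully: $N^2\cdot N\cdot N$ gives only $N^4$, and the logarithm must come from additionally counting the length $|v|$.

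The right count is to take side lengths $m|v|$ and $n|v^{\perp}|$ with $v$ primitive and $m,n\ge1$. For a fixed primitive $v$ with $|v|\sim R$, the number of pairs $(m,n)$ keeping the rectangle inside $A_N$ is $\sim (N/R)^2$. Summing,
\begin{align*}
\sum_{R\ \text{dyadic}\le \varepsilon N} R^2\cdot\Bigl(\frac NR\Bigr)^2\cdot N^2\sim N^4\log N,
\end{align*}
which is the required bound. The main obstacle is keeping the constant uniform, which requires the bound $|v^{\perp_{a,b}}|\le C_{a,b}|v|$. Distinct pairs $(v,m,n)$ with $v$ ranging over primitive vectors up to sign give distinct rectangles, so no overcounting occurs.
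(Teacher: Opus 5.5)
Your final argument is correct and is essentially the paper's. Both fix a base vertex in $A_{N/2}$, take the two $B_{a,b}$-orthogonal sides as integer multiples of a primitive direction and its orthogonal partner, and extract the logarithm from $\sum_{w\ \mathrm{primitive}}(N/|w|)^{2}$. The differences are cosmetic: you obtain the partner from Lemma~\ref{lemma:Borth} with $|v^{\perp_{a,b}}|\le C_{a,b}|v|$ and use the positive density of primitive vectors on dyadic shells, whereas the paper writes explicit pairs $u=g(\beta p,\alpha q)$, $v=\ell(q,-\operatorname{sgn}(ab)p)$ and evaluates $\sum_{m\le X}\varphi(m)/m^{2}$ by M\"obius inversion. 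In a write-up, delete the abandoned $m=1$ computation that only gives $N^{4}$, and note that the finitely many small $N$ (where $\varepsilon N<1$) are absorbed into the constant.
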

\begin{proof}
Let
\begin{align*}
        u_{N}(t,x)=S(t)\phi_{N}(x)
        =
        \sum_{k\in A_{N}}e^{i(x\cdot k-t\omega(k))}.
\end{align*}
By spatial orthogonality, \eqref{equ:res}, and the integrality of the
modulation, integration over $[0,2\pi]$ gives
\begin{align*}
\n{u_{N}}_{L^{4}([0,2\pi]\times\T^{2})}^{4}=(2\pi)^3\#\cR_{N},
\end{align*}
where $\cR_{N}=\cQ_{0}^{a,b}\cap A_{N}^{4}$ is the set of resonant
parallelograms with vertices in $A_{N}$.

It is enough to count a subfamily. For
$\xi\in A_{N/2}$ and $(u,v)\in\mathcal P_{N/4}^{a,b}$, we set
\begin{align*}
(\xi_{1},\xi_{2},\xi_{3},\xi_{4})=(\xi,\xi-u,\xi-u-v,\xi-v).
\end{align*}
All four vertices belong to $A_{N}$, the parallelogram relation holds,
and the resonance condition is $B_{a,b}(u,v)=0$. Moreover,
$(\xi,u,v)$ is recovered from the ordered quadruple, so this
construction is injective. Hence, we have
\begin{align*}
\#\cR_{N}\gtrsim N^{2}\,\#\mathcal P_{N/4}^{a,b},
\end{align*}
where
\begin{align*}
\mathcal P_{M}^{a,b}=\{(u,v)\in A_{M}^{2}:\ B_{a,b}(u,v)=0\}.
\end{align*}
We claim that
\begin{align*}
\#\mathcal P_{M}^{a,b}\gtrsimab M^{2}\log M.
\end{align*}
For simplicity, we write $d=\gcd(|a|,|b|)$, $\alpha=|a|/d$, and $\beta=|b|/d$, so $(\alpha,\beta)=1$. For positive integers $g,\ell,p,q$, with $(p,q)=1$, we define
\begin{align*}
 u=(\beta gp,\alpha gq),\quad
 v=(q\ell,-\operatorname{sgn}(ab)p\ell).
\end{align*}
Then $B_{a,b}(u,v)=0$. This parametrization is injective. From $u$,
one recovers
\begin{align*}
g=\gcd(u_{1}/\beta,u_{2}/\alpha),\quad p=u_{1}/(\beta g),\quad q=u_{2}/(\alpha g),
\end{align*}
and then $\ell$ is recovered from $v$. The conditions
$u,v\in A_{M}$ are guaranteed by
\begin{align*}
g\le \frac{cM}{\max(p,q)},\quad
\ell\le \frac{cM}{\max(p,q)}
\end{align*}
with $c=c(a,b)>0$ sufficiently small. Put
$r=\max(p,q)$. If $r\le cM$, then the number of admissible pairs
$(g,\ell)$ is
\begin{align*}
\left\lfloor\frac{cM}{r}\right\rfloor^{2}
\gtrsimab \frac{M^{2}}{r^{2}},
\end{align*}
where we used $\lfloor x\rfloor\ge x/2$ for $x\ge1$. Therefore, we get
\begin{align*}
\#\mathcal P_{M}^{a,b}\gtrsimab M^{2} \sum_{\substack{1\le p,q\le cM\,(p,q)=1}}\frac1{\max(p,q)^{2}}
\gtrsim   M^{2}\sum_{m\le cM}\frac{\varphi(m)}{m^{2}}
 \gtrsim  M^{2}\log M.
\end{align*}
For completeness, we justify the last two estimates. For $m\ge2$,
the number of ordered positive coprime pairs $(p,q)$ with
$\max(p,q)=m$ is $2\varphi(m)$. Thus, we derive
\begin{align*}
\sum_{\substack{1\le p,q\le cM\ (p,q)=1}}
\frac1{\max(p,q)^{2}}
\gtrsim \sum_{m\le cM}\frac{\varphi(m)}{m^{2}}.
\end{align*}
Let $\mu_{\rm M}$ denote the M\"obius function. The identity
\begin{align*}
\frac{\varphi(m)}m=\sum_{d\mid m}\frac{\mu_{\rm M}(d)}d
\end{align*}
and the substitution $m=dr$ give
\begin{align*}
\sum_{m\le X}\frac{\varphi(m)}{m^{2}}
&=\sum_{d\le X}\frac{\mu_{\rm M}(d)}{d^{2}}
\sum_{r\le X/d}\frac1r
=\frac1{\zeta(2)}\log X+O(1).
\end{align*}
Here, we used
$\sum_{d\ge1}\mu_{\rm M}(d)d^{-2}=1/\zeta(2)$ and the absolute
convergence of $\sum_{d\ge1}(1+\log d)d^{-2}$. This proves the
claimed logarithmic lower bound.

Thus, $\#\cR_{N}\gtrsimab N^{4}\log N$. Since
\begin{align*}
 \n{\phi_{N}}_{L^{2}(\T^{2})}^{2}\sim\#A_{N}\sim N^{2},
\end{align*}
the desired lower bound follows.
The argument above was written for sufficiently large $N$. The
finitely many remaining dyadic values are absorbed by decreasing the
implicit constant.
\end{proof}

\begin{lemma}[Null-line lower bound]
\label{lemma:null-lb}
Assume that $-ab$ is a square. For every dyadic $N\ge2$ and every
$0<T\le2\pi$, there exists
$\psi_{N}\ne0$ with Fourier support in $A_{N}$ such that
\begin{align*}
\n{S(t)\psi_{N}}_{L^{4}([0,T]\times\T^{2})}\gtrsimab (TN)^{1/4}\n{\psi_{N}}_{L^{2}(\T^{2})}.
\end{align*}
Consequently, both the $N^{1/4}$ loss in
\eqref{equ:sharp,Strichartz,L4} and the $N^{-1}$ time scale in
\eqref{equ:sharp,short-time,L4} are sharp.
\end{lemma}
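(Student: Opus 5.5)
Since $-ab$ is a square, Lemma~\ref{lemma:null} provides a primitive lattice vector $v=(v_{1},v_{2})\in\Z^{2}\setminus\{0\}$ with $\omega(v)=av_{1}^{2}+bv_{2}^{2}=0$. (Take $(n,m)=(h,a)$ and divide by $\gcd(h,a)$; $v$ depends only on $a,b$.) I will take the null-line datum
\begin{align*}
\wh{\psi_{N}}(k)=\bfone_{\{k=nv:\,1\le n\le cN\}},
\end{align*}
with $c=c(a,b)>0$ small enough that $nv\in A_{N}$ for all $n\le cN$. For $N$ large, $L:=\#\operatorname{supp}\wh{\psi_{N}}\sim_{a,b}N$ and $\n{\psi_{N}}_{L^{2}}^{2}\sim L$. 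The finitely many small dyadic $N$ are absorbed into the constant by picking any single mode.

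Since $\omega(nv)=n^{2}\omega(v)=0$, the flow is constant in time: $S(t)\psi_{N}=\psi_{N}$. Hence
\begin{align*}
\n{S(t)\psi_{N}}_{L^{4}([0,T]\times\T^{2})}^{4}=T\n{\psi_{N}}_{L^{4}(\T^{2})}^{4}.
\end{align*}
Next, $\psi_{N}(x)=\sum_{n=1}^{L}e^{in(v\cdot x)}$ is a one-dimensional Dirichlet-type sum in the variable $\theta=v\cdot x$. By Parseval,
\begin{align*}
\n{\psi_{N}}_{L^{4}}^{4}\sim\#\{(n_{1},n_{2},n_{3},n_{4})\in[1,L]^{4}:n_{1}+n_{3}=n_{2}+n_{4}\}\sim L^{3}.
\end{align*}
Because $v$ is primitive, the distinct multiples $nv$ give distinct frequencies, and the additive relation among the $n_{i}v$ is equivalent to the one among the $n_{i}$, so this count is exact. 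Therefore
\begin{align*}
\n{S(t)\psi_{N}}_{L^{4}([0,T]\times\T^{2})}^{4}\sim TL^{3}\sim (TN)\,L^{2}\sim TN\n{\psi_{N}}_{L^{2}}^{4},
\end{align*}
which is the claimed bound.

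For the consequences, taking $T=2\pi$ gives the $N^{1/4}$ lower bound matching \eqref{equ:sharp,Strichartz,L4}. For the time scale, suppose \eqref{equ:sharp,short-time,L4} holds on $[0,T_{N}]$ with a uniform constant $C$. Then $(T_{N}N)^{1/4}\lesssim C$, so $T_{N}\lesssim N^{-1}$. There is no real obstacle in this argument: the only points to check are that $v$ is chosen primitive, so frequencies do not collide, and that the support fits inside $A_{N}$, both of which are routine.
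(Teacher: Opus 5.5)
Your proposal is correct and follows essentially the same route as the paper: a primitive null vector, data supported on a progression along the null line (the paper takes the symmetric range $|n|\le K_{N}$ instead of your one-sided one), time-independence of the flow, and the $L^{3}$ additive-energy count. The paper gets that count by pushing Haar measure forward to the one-dimensional Dirichlet kernel rather than by Parseval, and it likewise handles small $N$ with a single mode ($\psi_{N}\equiv1$); you additionally spell out the time-scale consequence $T_{N}\lesssim N^{-1}$, which the paper leaves implicit.
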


\begin{proof}
Since $-ab$ is a square and $ab<0$, the equation
\begin{align*}
 a k_{1}^{2}+b k_{2}^{2}=0
\end{align*}
has a nonzero primitive integer solution $v$. Let
$N\ge4\n{v}_{\ell^\infty}$ for the moment, and set
\begin{align*}
K_{N}=\left\lfloor\frac{N}{2\n{v}_{\ell^\infty}}\right\rfloor,\quad
L_{N}=\{nv:\ |n|\le K_{N}\}\subset A_{N},
\end{align*}
and set $\wh{\psi_{N}}=\bfone_{L_{N}}$. Notice that
$K_{N}\sim_{a,b}N$. Since $\omega(k)=0$ for all $k\in L_{N}$,
\begin{align*}
 S(t)\psi_{N}(x)=\sum_{|n|\le K_{N}}e^{in v\cdot x}
\end{align*}
is independent of $t$. Because $v$ is primitive, the torus
homomorphism
\begin{align*}
 x\longmapsto v\cdot x\pmod{2\pi}
\end{align*}
pushes Haar measure on $\T^{2}$ forward to Haar measure on $\T$.
Consequently, up to harmless powers of $2\pi$, the spatial norms above
are those of the one-dimensional Dirichlet kernel
\begin{align*}
 D_{K_{N}}(y)=\sum_{|n|\le K_{N}}e^{iny}.
\end{align*}
Orthogonality gives
\begin{align*}
 \n{D_{K_{N}}}_{L^{2}(\T)}\sim K_{N}^{1/2}.
\end{align*}
Moreover,
\begin{align*}
\n{D_{K_{N}}}_{L^{4}(\T)}^{4}
\sim
\#\{(n_{1},n_{2},n_{3},n_{4})\in([-K_{N},K_{N}]\cap\Z)^{4}:
n_{1}+n_{3}=n_{2}+n_{4}\}
\sim K_{N}^3.
\end{align*}
For the upper bound, any three indices determine the fourth; for the
lower bound, restrict the first three indices to
$[-K_{N}/3,K_{N}/3]$, so the determined fourth remains in
$[-K_{N},K_{N}]$. We have therefore proved
\begin{align*}
\n{\psi_{N}}_{L^{2}(\T^{2})}\simab N^{1/2}, \quad
 \n{S(t)\psi_{N}}_{L^{4}([0,T]\times\T^{2})}\simab T^{1/4}N^{3/4}.
\end{align*}
The ratio is comparable to $(TN)^{1/4}$, with constants depending
on $a,b$, as claimed.
For the finitely many $2\le N<4\n{v}_{\ell^\infty}$, take
$\psi_{N}\equiv1$. Its ratio is a constant multiple of $T^{1/4}$,
which dominates $c_{a,b}(TN)^{1/4}$ after decreasing
$c_{a,b}>0$. This completes the proof for every dyadic $N\ge2$.
\end{proof}

\begin{proof}[Completion of Theorems \ref{thm:sharp,Strichartz,L4} and \ref{thm:sharp,short-time,L4}] 
The short-time upper bounds were proved in Subsection~\ref{sub:str},
and the fixed-time upper bounds follow by time partitioning.

The sharpness of these fixed-time bounds follows from
Lemma~\ref{lemma:box} in the nonsquare case and
Lemma~\ref{lemma:null-lb} in the square case, which give the matching
$(\log N)^{1/4}$ and $N^{1/4}$ lower bounds, respectively.
The short-time scales are also optimal by contradiction. We omit details.
\end{proof}

\section{Global well-posedness for the cubic NLS}\label{section:Global well-posedness}
In this section, we prove Theorem \ref{thm:gwp}.  Herr--Kwak
\cite[Section 4]{HK24} showed that a lossless cube-localized $L^{4}$
estimate on the logarithmic time scale yields small-mass global
well-posedness for the cubic equation.  That nonlinear implication is
robust under a change of the real quadratic dispersion.  
Throughout this section, implicit constants may depend on the fixed
regularity $s>0$ and coefficients $a,b$, but are independent of the
iteration scale $N$, unless otherwise stated.

Consider
\begin{equation}\label{equ:nls}
\begin{cases}
 i\pa_{t} u-\Omega u=\mu |u|^{2}u,\\
 u(0)=u_{0},
\end{cases}
\quad \mu\in\{\pm1\}.
\end{equation}
The mass is conserved for smooth solutions
$\n{u(t)}_{L^{2}(\T^{2})}=\n{u_{0}}_{L^{2}(\T^{2})}$.
Solutions below are understood in the Duhamel sense. The sign $\mu$
plays no role in the estimates.

\medskip

For dyadic $N$, let $\mathcal C_{N}$ denote the partition of $\Z^{2}$ into cubes of side length $N$.
\begin{lemma}[Galilean covariance]
\label{lemma:freq}
For $k_{0}\in\Z^{2}$, let $M_{k_{0}}g(x)=e^{ik_{0}\cdot x}g(x)$ and
\begin{align*}
        v(k_{0})=2(ak_{0,1},bk_{0,2}).
\end{align*}
Then
\begin{equation}\label{equ:gal}
 S(t)M_{k_{0}}g(x) =e^{ik_{0}\cdot x-it\omega(k_{0})}S(t)g(x-tv(k_{0})).
\end{equation}
Consequently, every estimate in Theorem \ref{thm:sharp,short-time,L4} remains
valid, with the same constant, for Fourier support in any translate
of $A_{N}$.
\end{lemma}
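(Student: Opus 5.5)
The identity \eqref{equ:gal} is a direct Fourier-side computation, and the consequence follows from it by the invariance of the relevant norms. First I compute the Fourier coefficients of the left side of \eqref{equ:gal}. Since $\widehat{M_{k_0}g}(k)=\wh g(k-k_0)$, we have
\begin{align*}
S(t)M_{k_0}g(x)=\sum_{k\in\Z^2}e^{-it\omega(k)}\wh g(k-k_0)e^{ik\cdot x}
=\sum_{j\in\Z^2}e^{-it\omega(j+k_0)}\wh g(j)e^{i(j+k_0)\cdot x}.
\end{align*}
Next I expand the symbol:
\begin{align*}
\omega(j+k_0)=\omega(j)+\omega(k_0)+2(ak_{0,1}j_1+bk_{0,2}j_2)=\omega(j)+\omega(k_0)+v(k_0)\cdot j.
\end{align*}
Substituting this gives
\begin{align*}
e^{ik_0\cdot x-it\omega(k_0)}\sum_j e^{-it\omega(j)}\wh g(j)e^{ij\cdot(x-tv(k_0))},
\end{align*}
which is exactly the right side of \eqref{equ:gal}. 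The vector $v(k_0)$ has integer entries, although integrality is not even needed, since translation by any real vector is well defined on $\T^2$.

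For the consequence, take $\phi$ with Fourier support in $k_0+A_N$ and write $\phi=M_{k_0}g$ with $g=M_{-k_0}\phi$. Then $g$ has Fourier support in $A_N$, so $g=P_{\le N}g$, and $\n g_{L^2}=\n\phi_{L^2}$. By \eqref{equ:gal}, for each fixed $t$ we get $|S(t)\phi(x)|=|S(t)g(x-tv(k_0))|$. A spatial translation on $\T^2$ preserves the $L^4_x$ norm, so for any time interval $I$,
\begin{align*}
\n{S(t)\phi}_{L^4(I\times\T^2)}=\n{S(t)g}_{L^4(I\times\T^2)}.
\end{align*}
Applying Theorem~\ref{thm:sharp,short-time,L4} to $g$ with $I=[0,T_N]$ therefore gives the same bound for $\phi$, with the same constant. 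The same reasoning transfers the fixed-time estimate of Theorem~\ref{thm:sharp,Strichartz,L4}, and it also covers time translates of the interval.

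Nothing here is a genuine obstacle. The only points to check are that the cross term in $\omega(j+k_0)$ is linear in $j$ (it is, because $\omega$ is quadratic with diagonal form $B_{a,b}$), and that the unimodular phase $e^{ik_0\cdot x-it\omega(k_0)}$ drops out upon taking absolute values.
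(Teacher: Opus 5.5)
Your proposal is correct and follows the paper's own argument. You expand $\omega(j+k_{0})=\omega(j)+\omega(k_{0})+2B_{a,b}(k_{0},j)$ to get \eqref{equ:gal}, then note that for each fixed $t$ the unimodular phase and the translation by $tv(k_{0})$ preserve the spatial $L^{4}$ norm, while $M_{-k_{0}}$ moves the Fourier support into $A_{N}$ isometrically in $L^{2}$; the paper states exactly this, only more tersely.
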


\begin{proof}
The identity follows from
\begin{align*}
\omega(k_{0}+n)=\omega(k_{0})+\omega(n)+2B_{a,b}(k_{0},n).
\end{align*}
For each fixed $t$, modulation, multiplication by a scalar phase,
and translation on $\T^{2}$ preserve every spatial $L^{p}$-norm.
\end{proof}

We record two consequences of
Theorem~\ref{thm:sharp,short-time,L4} and Lemma~\ref{lemma:freq}.
The first follows by the transference argument in
\cite[Proof of Lemma 4.3]{HK24}, with $S(t)$ in place of $e^{it\Delta}$.
The second follows from the first by time partitioning and dyadic
summation. We omit the proofs.

\begin{lemma}[Strichartz estimates in \texorpdfstring{$Y^s$}{Ys}]
\label{lemma:Y4}
Assume that $-ab$ is not a square. Then the following estimates hold.
\begin{enumerate}
\item For every dyadic $N\ge2$, every interval $I\subset\R$ with
$|I|\le c(\log N)^{-1}$, every $C\in\mathcal C_N$, and every
$u\in Y^0(I)$,
\begin{align*}
 \n{P_Cu}_{L^{4}(I\times\T^{2})}
 \lesssimab \n{P_Cu}_{Y^0(I)}.
\end{align*}
\item For every $s>0$, every bounded interval $I\subset\R$, and every
$u\in Y^s(I)$,
\begin{align*}
 \n{u}_{L^{4}(I\times\T^{2})}
 \lesssim_{a,b,s,I}\n{u}_{Y^s(I)}.
\end{align*}
\end{enumerate}
\end{lemma}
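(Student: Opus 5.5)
For part (1), I would follow the transference argument of \cite[Proof of Lemma 4.3]{HK24}.

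First, by Lemma~\ref{lemma:freq} and Theorem~\ref{thm:sharp,short-time,L4}, the free evolution satisfies
\begin{align*}
\n{S(t)P_C\phi}_{L^{4}(I\times\T^{2})}\lesssimab\n{P_C\phi}_{L^{2}}
\end{align*}
for every cube $C\in\mathcal C_N$ and every interval with $|I|\le c(\log N)^{-1}$. By time translation invariance, it does not matter where the interval $I$ sits.

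Next I transfer this to atoms. Take a $U^{4}_\Omega L^{2}$-atom $\sum_k\bfone_{[t_{k-1},t_k)}S(t)h_{k-1}$ with $\sum\n{h_k}^4_{L^2}=1$. On each piece $[t_{k-1},t_k)\cap I$ the free estimate applies. Summing fourth powers gives $\n{P_Cu}_{L^4(I\times\T^2)}\lesssim\n{P_Cu}_{U^4_\Omega}$, using that $P_C$ commutes with $S(t)$. Hence the bound holds on $U^4_\Omega L^2$.

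I then pass to $Y^0$ through the chain $\n{\cdot}_{U^4_\Omega}\lesssim\n{\cdot}_{V^2_\Omega}$, and $V^2_\Omega$ controlled by $Y^0$. The embedding $V^2\hookrightarrow U^4$ is \cite[Proposition 2.20]{HTT11}; it is loss-free because $4>2$. The control of $V^2_\Omega$ by $Y^0$ follows from the definition via Minkowski-type arguments, or directly as in \cite[Proposition 4.2]{HK24}.

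Restriction norms are handled by taking an extension $U$ with $U|_I=P_Cu$ and $\n U_{Y^0}\le2\n{P_Cu}_{Y^0(I)}$. Projecting with $P_C$ does not increase the norm, by Lemma~\ref{lemma:Y space}(1). The step needing most care is this embedding of $Y^0$ into $U^4_\Omega$. I would not attempt an $L^4$ bound on all of $\R$; instead I would restrict attention to $I$, multiply the extension by $\bfone_I$ (which is bounded on $V^2$), and then use the atomic estimate on $I$ alone.

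For part (2), fix $s>0$ and a bounded interval $I$, and decompose $u=\sum_{N}P_Nu$ dyadically. For each $N$, cover the annulus by $O(1)$ cubes in $\mathcal C_N$. Split $I$ into $O(|I|\log N+1)$ subintervals of length $c(\log N)^{-1}$, apply part (1) on each, and sum the fourth powers. Since $\n{P_Cu}_{Y^0(J)}\le\n{P_Cu}_{Y^0(I)}$ for $J\subset I$, this yields
\begin{align*}
\n{P_Nu}_{L^4(I\times\T^2)}\lesssim(|I|\log N+1)^{1/4}\n{P_Nu}_{Y^0(I)}\lesssim(|I|\log N+1)^{1/4}N^{-s}\n{P_Nu}_{Y^s(I)}.
\end{align*}
The triangle inequality over $N$ and Cauchy--Schwarz, with $\sum_N(\log N)^{1/2}N^{-2s}<\infty$, then give the bound $\lesssim_{a,b,s,I}\n u_{Y^s(I)}$.

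The main obstacle is part (1)'s passage from free solutions to the $Y^0$ restriction norm with no $\log$ loss. This relies on the $V^2\subset U^4$ embedding together with careful handling of the restriction to $I$.
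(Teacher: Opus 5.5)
Your proposal is correct and follows the route the paper indicates. For part (1) you use the Herr--Kwak transference argument: the free short-time estimate on translated cubes via Lemma~\ref{lemma:freq}, then $U^4_\Omega$-atoms, then $Y^0\hookrightarrow V^2_\Omega\hookrightarrow U^4_\Omega$. For part (2) you partition $I$ into $O(|I|\log N+1)$ pieces and sum dyadically.

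The multiplication by $\bfone_I$ is unnecessary, since the atomic bound already measures $L^4$ only on $I$. The block $N=1$ should be handled with cubes from $\mathcal C_2$ (or trivially), because part (1) is stated only for $N\ge2$; both points are cosmetic.
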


If $M\ge N$ and $C\in\mathcal C_{M}$, partitioning an interval $I_{N}$ of length $(\log N)^{-1}$ into $O(\log M/\log N)$ intervals of length $(\log M)^{-1}$ gives
\begin{equation}\label{equ:Y4-long}
\n{\mathbf 1_{I_{N}}P_Cu}_{L^{4}_{t,x}}\lesssimab
\left(1+\frac{\log M}{\log N}\right)^{1/4}\n{P_Cu}_{Y^0(I_{N})}.
\end{equation}
Here and below constants absorb the fixed constant $c=c(a,b)$ in
the admissible time scale, and restriction norms are monotone when the
time interval is shortened.

\medskip

To state the trilinear estimate, fix $s>0$. For dyadic $N\ge2$, we set
\begin{align*}
        I_{N}=[0,c(\log N)^{-1}]
\end{align*}
and define
\begin{align*}
\n{u}_{Z_{N}} = \n{u}_{Y^0(I_{N})}+N^{-s}\n{u}_{Y^{s}(I_{N})}.
\end{align*}
The following estimate is obtained by the argument of
\cite[Proof of Lemma 4.4]{HK24}, with Lemma~\ref{lemma:Y4}(1)
and \eqref{equ:Y4-long} providing the Strichartz inputs.
The Duhamel duality and frequency summation argument applies to the
present dispersion. Using ordinary dyadic scales gives the result
for every fixed $s>0$, with constants allowed to depend on $s,a,b$.

\begin{lemma}[Trilinear estimate]
\label{lemma:tri}
Assume that $-ab$ is not a square. For $N$ sufficiently large
depending on $s,a,b$,
\begin{align*}
        \n{\mathcal I_{0}(u_{1}u_{2}u_{3})}_{Z_{N}}
        \lesssim_{s,a,b}
        \n{u_{1}}_{Z_{N}}\n{u_{2}}_{Z_{N}}\n{u_{3}}_{Z_{N}},
\end{align*}
where any of the factors $u_j$ may be replaced by $\overline{u_j}$.
\end{lemma}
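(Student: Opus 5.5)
The plan is to follow the proof of \cite[Lemma 4.4]{HK24}, with $S(t)$ in place of $e^{it\Delta}$. By Lemma~\ref{lemma:Y space}(4), it suffices to bound
\begin{align*}
\bbabs{\int_{I_N}\int_{\T^2} u_1u_2u_3\overline{v}\,dxdt}
\end{align*}
for $\n{v}_{Y^0(I_N)}\le1$ (for the $Y^0$ part), and the analogous quantity with $\n v_{Y^{-s}(I_N)}\le1$ (for the $N^{-s}Y^s$ part). We decompose every function into dyadic pieces $P_{N_j}$ and treat separately the frequencies $\lesssim N$ and $\gtrsim N$. The conjugates are harmless, since $\overline{u}$ has Fourier support reflected, and cube localisation is preserved under reflection.

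\textbf{Low/comparable frequencies.} Write $N_1\ge N_2\ge N_3$ for the frequencies of the $u_j$, and $N_0$ for that of $v$. Frequency support forces $N_0\lesssim N_1$, and when $N_0 \ll N_1$ we get $N_1\sim N_2$. When the top frequency $N_1$ is $\lesssim N$, we partition $\Z^2$ into cubes $C\in\mathcal C_{N_2}$ of the second-largest scale. The highest factor and the dual function are then almost orthogonally localised to pairs of cubes $C,C'$ with $C\cap(C'+O(N_2))\neq\emptyset$. We then apply H\"older in $L^4_{t,x}$ together with Lemma~\ref{lemma:Y4}(1) and \eqref{equ:Y4-long}, which is valid because $N_2\le N$ up to constants; the factor $(1+\log N_2/\log N)^{1/4}$ is $O(1)$ in this range. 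Next we use Cauchy--Schwarz over cubes and Lemma~\ref{lemma:Y space}(1) to recover $\ell^2$-sums. Finally, summing over the dyadic $N_2,N_3\le N_1$ is the main obstacle, because there is no derivative gain at the $L^4$ level.

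\textbf{Summation (main obstacle).} For the dyadic summation I would exploit the $Y^s$ component of $Z_N$. For frequencies above $N$, we bound the $Y^0$ norm of a piece by
\begin{align*}
\n{P_{M}u}_{Y^0}\le M^{-s}\n{u}_{Y^s}\le (N/M)^{s}\n{u}_{Z_N},
\end{align*}
which gives geometric decay in $M/N$. This decay beats the logarithmic loss $(\log M/\log N)^{1/4}$ from \eqref{equ:Y4-long}; here $N$ must be large depending on $s$. For frequencies below $N$, the dyadic sum $\sum_{N_3\le N_2}$ is the delicate part. Following Herr--Kwak, I would avoid a $\log N$ loss by placing the two lowest factors in cubes of size $N_3$, so that the $L^4$ estimate applies to each cube with no loss. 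The sum over $N_3$ should then be controlled through Lemma~\ref{lemma:Y space}(3) and $L^\infty_tL^2_x$ Bernstein-type bounds on short cubes combined with the $Y^0$ norm, gaining a factor like $(N_3/N_2)^{\theta}$ from the square-function structure. I would reproduce their mechanism verbatim; the only structural input used is the cube-localised lossless $L^4$ bound, which Lemma~\ref{lemma:freq} makes translation invariant.

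\textbf{The $N^{-s}Y^s$ part.} Here we put the weight $\langle\xi\rangle^s$ on the output. The weight transfers to the highest-frequency input via $N_0\lesssim N_1$, giving the $N^{-s}\n{u_{j}}_{Y^s}$ factor for the highest-frequency input, while the other two are bounded in $Y^0$. Combining both parts yields the trilinear bound for $N\ge N_0(s,a,b)$.
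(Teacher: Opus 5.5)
Your overall route is the paper's: its proof simply transfers \cite[Lemma 4.4]{HK24}, with Lemma~\ref{lemma:Y4}(1) and \eqref{equ:Y4-long} as the Strichartz inputs. Your duality step, the transfer of the output weight to the top input, and the use of $(N/M)^{s}$ to beat $(1+\log M/\log N)^{1/4}$ above $N$ are all fine. The genuine gap is in the paragraph you call the main obstacle. There you keep dyadic pieces $N_3\le N_2\le N$, cut the two lowest factors into cubes of side $N_3$, and hope to recover summability from a gain $(N_3/N_2)^{\theta}$ via Lemma~\ref{lemma:Y space}(3) and Bernstein. That gain is not available from these tools. Bernstein on a frequency-$N_3$ piece costs a positive power of $N_3$ rather than producing decay. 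Decay in $N_3/N_2$ on the time scale $(\log N)^{-1}$ would be a bilinear Strichartz refinement, and nothing in the paper provides one. Cutting the low factors into smaller cubes also works against you, since it is the \emph{high} pair that must be localised to recover $\ell^{2}$ sums. Without that gain, the $\ell^{1}$ sums over $N_2,N_3\le N$ cost powers of $\log N$. This is fatal, because the constant must be uniform in $N$ for the logarithmic iteration.

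The missing observation is that below $N$ there is nothing to sum. For $K\le N$ the box $A_K$ meets $O(1)$ cubes of $\mathcal C_N$. So Lemma~\ref{lemma:Y4}(1) and Lemma~\ref{lemma:freq} give $\n{P_{\le K}u}_{L^4(I_N\times\T^2)}\lesssim\n{u}_{Y^0(I_N)}$ for the whole block, and \eqref{equ:Y4-long} gives the loss $(1+\log K/\log N)^{1/4}$ when $K>N$. Hence one decomposes dyadically only above $N$. Cubes are needed only when the top frequency exceeds $N$, not, as in your first paragraph, when it is $\lesssim N$. Pair the two functions of largest frequency in almost orthogonal cubes of side $K=\max(N,\text{third largest frequency})$. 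These are $u_1$ and $v$ if $N_0\sim N_1$, but $u_1$ and $u_2$ in the high-high-to-low case, which your sketch omits. Place the remaining two factors in single blocks $P_{\lesssim K}$. H\"older, Cauchy--Schwarz over cubes and Lemma~\ref{lemma:Y space}(1) then give a total loss $\lesssim 1+\log K/\log N$. When $K>N$, some input at frequency $\gtrsim K$ supplies $(N/K)^{s}$ through its $N^{-s}Y^{s}$ norm. The sum over $K$ therefore converges with a constant depending only on $s,a,b$.
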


\medskip

Now, we are ready to discuss the proof of
Theorem~\ref{thm:gwp}. 

\begin{proof}[Proof of Theorem \ref{thm:gwp}]
With Lemma~\ref{lemma:tri} at hand, Theorem~\ref{thm:gwp}
follows by the contraction and logarithmic iteration argument
of \cite[Proof of Theorem 1.4]{HK24}. We omit the details.
\end{proof}

\begin{ackno}\normalfont 
S.S. was supported in part by the NSF of China under Grant 12501322, and Anhui Provincial NSF 2508085QA001.
Y.W. was supported by the EPSRC Mathematical Sciences Small Grant (grant no. UKRI1116).
\end{ackno}

\noindent\textbf{Generative AI Statement.}
During the preparation of this work, the authors used ChatGPT (OpenAI) and DeepSeek to assist with language polishing and LaTex formatting. All AI-assisted content was reviewed, verified, and revised as necessary by the authors. The authors take full responsibility for the accuracy, originality, and entire content of the manuscript.

\noindent\textbf{Data Availability Statement}
Data sharing is not applicable to this article as no datasets were generated or analysed during the current study.

\noindent\textbf{Conflict of Interest}
The authors declare that they have no conflict of interest.

\appendix

\section{Typed-rectangle bookkeeping for the dyadic count}\label{appendix:Typed-rectangle}
This appendix proves Lemmas~\ref{lemma:type,red} and~\ref{lemma:type,est},
used in the proof of Proposition~\ref{prop:count}. These extend
\cite[Lemmas 3.2 and 3.3]{HK24}, which treat $(a,b)=(1,1)$, to
general nonzero integral coefficients $(a,b)$ with $-ab$ nonsquare.
Their proofs follow the same scheme, with
Lemmas~\ref{lemma:corner}--\ref{lemma:divisor side} supplying the
counting estimates in the present geometric setting.
For completeness, we provide the detailed arguments.
Readers satisfied that these estimates preserve the counting
structure underlying the analytic argument may skip this appendix.

\subsection{Reduction to typed resonant rectangles}\label{appendix:Reduction}
Let $S_{0},\ldots,S_m$ be the sets in Proposition~\ref{prop:count}.
For $\alpha,\beta\in\{1,2,3\}$, denote by $\cQ_{0,\alpha,\beta}^{a,b}$ the set of rectangles $Q=(\xi_{1},\xi_{2},\xi_{3},\xi_{4})\in\cQ_{0}^{a,b}$, with four distinct vertices in $\cup_jS_j$, such that $\xi_{1},\xi_{2}$ are type $\alpha$ vertices and $\xi_{3},\xi_{4}$ are type $\beta$ vertices.
\begin{lemma}[Reduction to typed resonant rectangles]
\label{lemma:type,red}
Let $f$ be as in Proposition~\ref{prop:count}, and assume that
$-ab$ is not a square.
For every integer $\tau\ge0$,
\begin{equation}\label{equ:reduction to typed}
\sum_{Q\in\cQ_\tau^{a,b}}f(Q)\lesssim\max_{\alpha,\beta=1,2,3}
\sum_{\substack{Q\in\cQ_{0,\alpha,\beta}^{a,b}\\
d_{a,b}(\xi_{1}-\xi_{4})\mid \tau}}f(Q) + \n f_{\ell^{2}(\Z^{2})}^{4}.
\end{equation}
\end{lemma}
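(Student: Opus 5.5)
The plan is to split $\cQ_\tau^{a,b}$ according to the degeneracy of $Q$, then reparametrize the nondegenerate parallelograms as $B_{a,b}$-rectangles. This follows the scheme of \cite[Lemma 3.2]{HK24}.

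\emph{Degenerate configurations.} First consider parallelograms with a repeated vertex. By \eqref{equ:para}, $\xi_1=\xi_2$ forces $\xi_3=\xi_4$, and $\xi_1=\xi_4$ forces $\xi_2=\xi_3$. In both cases the modulation \eqref{equ:res} vanishes and $f(Q)=f(\xi_1)^2f(\xi_3)^2$, so their total contribution is at most $2\n f_{\ell^2}^4$. If instead $\xi_1=\xi_3$, then $\xi_2+\xi_4=2\xi_1$. Here Cauchy--Schwarz, using $f(\xi_2)f(\xi_4)\le\frac12(f(\xi_2)^2+f(\xi_4)^2)$ and summing over $\xi_1$ with $\sum f(\xi_1)^2$, bounds the contribution by $\n f_{\ell^2}^4$. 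The case $\xi_2=\xi_4$ is symmetric. Hence all non-distinct quadruples are absorbed into $\n f_{\ell^2}^4$.

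\emph{Reparametrization of distinct quadruples.} Now let $Q$ have four distinct vertices and modulation $\pm\tau$. Set $u=\xi_1-\xi_2$ and $v=\xi_1-\xi_4$, so that $2B_{a,b}(u,v)=\pm\tau$. Then $d_{a,b}(v)\mid B_{a,b}(u,v)$, and therefore $d_{a,b}(\xi_1-\xi_4)\mid\tau$.

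The difficulty is that $Q$ need not be a $B_{a,b}$-rectangle when $\tau>0$. Following Herr--Kwak, I would map $Q$ to an associated rectangle with the same vertex set. The resonance identity \eqref{equ:res} holds for any ordering satisfying \eqref{equ:para}. A cleaner route is probably to bound $f(Q)$ directly by $f$ of a rectangle: write $f(Q)\le\frac12\bigl(f(\xi_1)^2f(\xi_3)^2+f(\xi_2)^2f(\xi_4)^2\bigr)$, and then count. But that loses the structure needed for the counting lemmas. So the intended mechanism must be the following. For fixed $\xi_1,\xi_4$, i.e.\ fixed $v$, the set of $\xi_2$ with prescribed $B_{a,b}(\xi_1-\xi_2,v)$ is a translate of $\Z v^{\perp_{a,b}}$ by Lemma~\ref{lemma:Borth}(2). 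So $(\xi_1,\xi_2',\xi_3',\xi_4)$, obtained by projecting along $v^{\perp_{a,b}}$, is a $B_{a,b}$-rectangle.

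I expect this to be the main obstacle: making the correspondence from $\cQ_\tau$ to $\cQ_0$ injective up to bounded multiplicity while keeping $f$-weights controlled. My first attempt would be to reinterpret the statement so that the sum on the right runs over rectangles in $\cQ_0$ whose side $\xi_1-\xi_4$ satisfies the divisibility constraint. The quadruples on the left would be assigned to these via the translates described above, with the weight $f(Q)$ dominated through Cauchy--Schwarz on the two lines parallel to $v^{\perp_{a,b}}$ through $\xi_1$ and $\xi_4$.

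\emph{Sorting by type.} Once every distinct quadruple has been related to a rectangle, classify its vertices by type $1,2,3$ using the crosses in their level sets. There are at most $3^4$ type patterns. By the symmetries of parallelograms, namely the cyclic shifts and reflections that preserve \eqref{equ:para} and $\cQ_0^{a,b}$, each pattern can be relabeled so that $\xi_1,\xi_2$ share type $\alpha$ and $\xi_3,\xi_4$ share type $\beta$. Where types do not pair up this way, apply AM--GM to $f(Q)$, splitting it into products over the two pairs, and bound by the symmetric patterns. Taking the maximum over $(\alpha,\beta)$ then gives \eqref{equ:reduction to typed}.
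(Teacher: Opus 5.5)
Your handling of the repeated-vertex quadruples and the observation $d_{a,b}(\xi_1-\xi_4)\mid\tau$ are correct. However, the heart of the lemma is left open as your acknowledged ``main obstacle'': you never bound a sum over parallelograms of modulation $\tau$ by a sum over $B_{a,b}$-rectangles, and the mechanisms you suggest would not close it. Admissible relabelings of $Q$ preserve $\tau$, so $Q$ itself is never a $B_{a,b}$-rectangle when $\tau\ne0$. Projecting $\xi_2$ along $v^{\perp_{a,b}}$ produces vertices whose weights are unrelated to $f(\xi_2),f(\xi_3)$. No injective assignment $\cQ_\tau^{a,b}\to\cQ_0^{a,b}$ is needed.

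The missing idea is Cauchy--Schwarz in the level variable. Fix $\xi=\xi_1-\xi_4=\xi_2-\xi_3$ and put $A_\xi(\sigma)=\sum_{B_{a,b}(\eta,\xi)=\sigma}f(\eta)f(\eta-\xi)$. Since $\tau=2|B_{a,b}(\xi_1,\xi)-B_{a,b}(\xi_2,\xi)|$, the opposite sides $(\xi_1,\xi_4)$ and $(\xi_2,\xi_3)$ are segments at levels $\sigma$ and $\sigma\mp\tau/2$. Hence
\[
\sum_{\substack{Q\in\cQ_\tau^{a,b}\\ \xi_1-\xi_4=\xi}}f(Q)\le\sum_{\pm}\sum_{\sigma}A_\xi(\sigma)A_\xi(\sigma\mp\tau/2)\le2\sum_\sigma A_\xi(\sigma)^2 .
\]
Expanding $A_\xi(\sigma)^2$ pairs two segments at the \emph{same} level, i.e.\ $B_{a,b}(\eta-\eta',\xi)=0$. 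The off-diagonal terms are exactly the rectangles $(\eta,\eta',\eta'-\xi,\eta-\xi)\in\cQ_0^{a,b}$ with the same side $\xi$, so the constraint $d_{a,b}(\xi)\mid\tau$ carries over. The diagonal terms sum over $\xi$ to $\n{f}_{\ell^{2}}^{4}$. The nonsquare hypothesis enters here: $\eta-\eta'=\pm\xi$ would give $B_{a,b}(\xi,\xi)=0$, so the four vertices are distinct.

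The type-sorting step has the same defect. Dihedral relabeling cannot turn a pattern such as $(1,2,1,3)$ into a paired one. AM--GM applied to a single $f(Q)$ only yields the degenerate products $f(\xi_1)^2f(\xi_4)^2$ and $f(\xi_2)^2f(\xi_3)^2$, not rectangles of symmetric type. The fix is to split $A_\xi(\sigma)$ into nine pieces \emph{before} squaring. The pieces are indexed by the types of the crosses at $\eta$ and $\eta-\xi$ with directions $\xi,\xi^{\perp_{a,b}}$, which depend only on the segment. Then use $(\sum_{i=1}^{9}x_i)^2\le9\sum_{i}x_i^2$. Each square now pairs only segments of one class, producing rectangles with $\xi_1,\xi_2$ of type $\alpha$ and $\xi_3,\xi_4$ of type $\beta$. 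This is the paper's argument, and it treats $\tau=0$ and $\tau>0$ uniformly.
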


\begin{proof}[Proof of Lemma \ref{lemma:type,red}]
We first dispose of all parallelograms having a repeated vertex. If two
adjacent vertices coincide, the parallelogram relation forces the other
two adjacent vertices to coincide as well. The total contribution of
all such configurations is therefore, up to an absolute multiplicity,
bounded by
\begin{align*}
\sum_{\eta,\zeta}f(\eta)^{2}f(\zeta)^{2}=\n f_{\ell^{2}}^{4}.
\end{align*}
It remains to consider a repeated pair of opposite vertices. If
$\xi_{1}=\xi_{3}=x$, write
$\xi_{2}=x+h$ and $\xi_{4}=x-h$. By Cauchy--Schwarz,
\begin{align*}
\sum_{x,h}f(x)^{2}f(x+h)f(x-h)
&\le
\sum_x f(x)^{2}
\bigg(\sum_hf(x+h)^{2}\bigg)^{1/2}
\bigg(\sum_hf(x-h)^{2}\bigg)^{1/2}=\n f_{\ell^{2}}^{4}.
\end{align*}
The case $\xi_{2}=\xi_{4}$ is identical. Thus every repeated-vertex
configuration has the required bound.
We may therefore assume that the four vertices are distinct and put
$\xi=\xi_{1}-\xi_{4}\ne0$. For such a parallelogram,
\begin{align*}
        \tau=2\abs{B_{a,b}(\xi_{1}-\xi_{2},\xi)}.
\end{align*}
Lemma \ref{lemma:Borth} shows that the exact divisibility
condition is
\begin{align*}
2d_{a,b}(\xi)\mid\tau.
\end{align*}
For the upper bound below we use the weaker necessary condition
$d_{a,b}(\xi)\mid\tau$. In particular, every nonempty modulation
class has even $\tau$; when $\tau$ is odd the assertion is trivial.

For $\sigma\in\Z$, we set
\begin{align*}
        E_\xi^{\sigma}
        =
        \{(\eta,\eta-\xi): B_{a,b}(\eta,\xi)=\sigma\}.
\end{align*}
The two opposite sides $(\xi_{1},\xi_{4})$ and $(\xi_{2},\xi_{3})$ lie in
levels $E_\xi^{\sigma_{1}}$ and $E_\xi^{\sigma_{2}}$ with
$\sigma_{1}-\sigma_{2}=\pm\tau/2$.
Writing
\begin{align*}
 A_\xi(\sigma)=\sum_{(\eta,\eta-\xi)\in E_\xi^{\sigma}}f(\eta)f(\eta-\xi),
\end{align*}
Cauchy's inequality gives, for fixed $\xi$,
\begin{align*}
\begin{aligned}
\sum_{\substack{Q\in\cQ_\tau^{a,b}\\ \xi_{1}-\xi_{4}=\xi}} f(Q)\lesssim
 \sum_{\sigma_{1}-\sigma_{2}=\pm\tau/2}
 A_\xi(\sigma_{1})A_\xi(\sigma_{2}) \lesssim
 \sum_\sigma A_\xi(\sigma)^{2}.
\end{aligned}
\end{align*}
Split $E_\xi^{\sigma}$ into the nine subclasses
$E_{\xi,\alpha,\beta}^{\sigma}$, according to the types of the
$(a,b)$-crosses at $\eta$ and $\eta-\xi$ with directions
$\xi$ and $\xi^{\perp_{a,b}}$. Since there are only nine classes,
\begin{align*}
 \sum_\sigma A_\xi(\sigma)^{2}\lesssim
 \sum_{\alpha,\beta=1}^3\sum_\sigma
 \Bigg(
 \sum_{(\eta,\eta-\xi)\in E_{\xi,\alpha,\beta}^{\sigma}}
 f(\eta)f(\eta-\xi)
 \Bigg)^{2}.
\end{align*}
If two distinct segments $(\eta,\eta-\xi)$ and
$(\eta',\eta'-\xi)$ lie in the same $E_{\xi,\alpha,\beta}^{\sigma}$,
then
\begin{align*}
        B_{a,b}(\eta-\eta',\xi)=0.
\end{align*}
The four endpoints are distinct. Indeed, equality of the two segments
has already been excluded. Any remaining collision would give
$\eta-\eta'=\pm\xi$; combining this with the preceding orthogonality
would imply $B_{a,b}(\xi,\xi)=0$, contrary to
Lemma \ref{lemma:Borth}. Hence
\begin{align*}
(\eta,\eta',\eta'-\xi,\eta-\xi)\in\cQ_{0,\alpha,\beta}^{a,b}.
\end{align*}
Therefore, for each $\alpha,\beta$,
\begin{align*}
\sum_{\substack{\xi\in\Z^{2}\setminus\{0\}\\ d_{a,b}(\xi)\mid\tau}}
\sum_\sigma
\Bigg(
\sum_{(\eta,\eta-\xi)\in E_{\xi,\alpha,\beta}^{\sigma}}
f(\eta)f(\eta-\xi)
\Bigg)^{2}
&\quad\lesssim
\sum_{\substack{Q\in\cQ_{0,\alpha,\beta}^{a,b}\\
d_{a,b}(\xi_{1}-\xi_{4})\mid\tau}}f(Q)
+\sum_{\eta,\zeta\in\Z^{2}} f(\eta)^{2}f(\zeta)^{2}.
\end{align*}
Taking the maximum over $\alpha,\beta$ and using
$\sum_{\eta,\zeta}f(\eta)^{2}f(\zeta)^{2}=\n f_{\ell^{2}}^{4}$ proves
\eqref{equ:reduction to typed}.
\end{proof}

\subsection{The type-by-type estimates}\label{appendix:type-by-type}
\begin{lemma}[Typed rectangle estimates]
\label{lemma:type,est}
Let $f$ be as in Proposition \ref{prop:count}, and
assume that $-ab$ is not a square.
If $(\alpha,\beta)\ne(2,2)$, then
\begin{equation}\label{equ:type,off}
 \sum_{Q\in\cQ_{0,\alpha,\beta}^{a,b}}f(Q)\lesssimab
 \n{\lambda}_{\ell^{2}}^{4}.
\end{equation}
For $(\alpha,\beta)=(2,2)$,
\begin{equation}\label{equ:type22}
\sum_{Q\in\cQ_{0,2,2}^{a,b}}f(Q)\lesssimab
 m\n{\lambda}_{\ell^{2}}^{4}
\end{equation}
and
\begin{equation}\label{equ:type22-div}
\sum_{Q\in\cQ_{0,2,2}^{a,b}}\frac{1}{d_{a,b}(\xi_{1}-\xi_{4})}f(Q)\lesssimab
\n{\lambda}_{\ell^{2}}^{4}.
\end{equation}
\end{lemma}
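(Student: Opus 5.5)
We follow the scheme of \cite[Lemma 3.3]{HK24}. First we decompose by dyadic levels and richness shells. Each vertex $\xi_i$ lies in some $S_{j_i}$, so $f(Q)=2^{-(j_1+j_2+j_3+j_4)/2}\prod_i\lambda_{j_i}$. For a rectangle in $\cQ_{0,\alpha,\beta}^{a,b}$ we write $2^{a_i}\le R_{j_i}(\xi_i)<2^{a_i+1}$. Type $3$ means $a_i=0$, and type $2$ means $1\le a_i<j_i/2+C$. By the symmetries of rectangles (cyclic relabelling and reflection, which preserve $\cQ_0^{a,b}$), we may assume $j_1=\max_i j_i$ in each case, and we carry the type information along the relabelling. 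The target in every case is
\begin{align*}
\sum_{\vec j}\#\{\text{typed }Q\text{ with levels }\vec j\}\,2^{-\sum j_i/2}\prod_i\lambda_{j_i}\lesssim \n{\lambda}_{\ell^2}^4 .
\end{align*}
We will get this by showing that the count is bounded by $2^{(j_1+j_2+j_3+j_4)/2}$ times a factor decaying geometrically in the gaps $j_1-j_i$. We then close with Schur's test or Cauchy--Schwarz in the $j$-indices. For the $(2,2)$ bound \eqref{equ:type22}, we allow an extra factor of $m$ from summing over the shells.

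\textbf{Rich vertices (some type $1$).} By the hypothesis of Proposition~\ref{prop:count}, a type $1$ vertex lies on exactly one rich line, and by Szemer\'edi--Trotter \eqref{equ:rich} there are only $O(2^{j/2-C})$ rich lines, each carrying up to $O(2^j)$ points. If $\xi_1,\xi_2$ are rich, the side $\xi_1\xi_2$ or the orthogonal sides lie on their rich lines. Fixing an opposite pair of vertices, say $\xi_1$ and $\xi_3$, together with the rich line through one of them determines the rectangle up to $O(1)$ choices: the side direction is fixed, the orthogonal direction is then fixed by Lemma~\ref{lemma:Borth}, and two non-parallel lines meet in one point because the directions are independent in the nonsquare case. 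This gives counts of the form $2^{j_1+j_3}$, or $2^{j_2+j_4}$. When the maximal level sits at the rich vertex, these counts give Cauchy--Schwarz-summable bounds. Mixed types $(1,\beta)$ use Lemma~\ref{lemma:corner} at the poor vertex, or this uniqueness argument.

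\textbf{Poor vertices (some type $3$).} If a vertex, say $\xi_3$, is of type $3$, Lemma~\ref{lemma:corner} with $a_3=0$ bounds the count by $2^{j_1+j_2}$, after permuting so that the poor vertex plays the role of $\xi_3$. We choose the labelling so that the two free vertices are opposite or adjacent according to which levels are largest. Combined with the trivial bound (three vertices determine the fourth, giving $2^{j_1+j_2+j_4}\min(\ldots)$), this yields geometric decay.

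\textbf{Type $(2,2)$.} This is the main step. We use Lemma~\ref{lemma:Medium-line} with both forms \eqref{equ:Medium,count,a4} and \eqref{equ:Medium,count,a3}, interpolating between them, together with the trivial bounds $2^{j_2+a_2}$-type counts from the corner lemma applied at other vertices. With $j_1$ maximal, $2^{2j_1-2a_1+a_2+a_4}$ is good when $a_1$ is large, and the corner bounds are good when the $a_i$ are small. Taking geometric means yields, for fixed $\vec j$, a bound summable over $(a_2,a_3,a_4)$ but only bounded uniformly in $a_1$. Summing $a_1$ over the $\lesssim j_1/2\le m$ shells then produces the factor $m$ in \eqref{equ:type22}. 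For \eqref{equ:type22-div}, we replace \eqref{equ:Medium,count,a4} by Lemma~\ref{lemma:divisor side}. Its exponent $a_4/2$ instead of $a_4$ produces geometric decay in $a_1$ after balancing against the other bounds, so the shell sum converges and no factor $m$ appears. I expect the main obstacle to be the exponent bookkeeping that makes every case summable in both $\vec j$ and $\vec a$, with the right placement of the maximal level under the type constraints.
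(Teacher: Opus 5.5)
\textbf{Main gap: the mixed types $(2,3)$ and $(3,2)$.} Your treatment of these cases does not close. Take $\cQ^{a,b}_{0,2,3}$, where only $\xi_3,\xi_4$ are poor. Lemma~\ref{lemma:corner} always frees the vertex opposite the poor one together with one of its neighbours. Freeing an opposite pair, as you propose, is useless: the poor vertex then lies on a conic rather than a line, and poorness says nothing about it. So the only corner bounds are $2^{j_1+j_2}$, $2^{j_1+j_4}$ and $2^{j_2+j_3}$. The bound $2^{j_3+j_4}$ would need a poor vertex at $\xi_1$ or $\xi_2$.

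Put $x=j_1-j_3$, $y=j_2-j_4$ and $H=\frac12\sum_k j_k$. The three exponents minus $H$ are $\frac12(x+y)$ and $\pm\frac12(x-y)$. So on the whole set $x=y\ge0$, where the medium vertices sit at the higher levels, you get exactly $2^{H}$. Your three-vertex bound $2^{j_1+j_2+j_4}$ is worse. Summing $2^{-H}\prod_k\lambda_{j_k}$ over this set gives $\sum_{D\ge0}\bigl(\sum_j\lambda_j\lambda_{j-D}\bigr)^2$, which is $\sim m$ for $\lambda_j=m^{-1/2}\bfone_{[0,m]}(j)$. Poorness alone therefore cannot give \eqref{equ:type,off} here.

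The missing idea is to use the medium richness of $\xi_1,\xi_2$ through Szemer\'edi--Trotter, as the paper does.
\begin{itemize}
\item Counting crosses at $\xi_1$ (resp.\ $\xi_2$) and then walking through the two poor vertices gives $n\lesssim2^{2j_1-2a_1}$ and $n\lesssim2^{2j_2-2a_2}$.
\item Freeing $(\xi_3,\xi_4)$ and paying for one medium line gives $n\lesssim2^{j_3+j_4+a_1}$ and $n\lesssim2^{j_3+j_4+a_2}$.
\item Interpolating with weights $\frac25,\frac35$ and summing in $a_1,a_2$ yields $q_{\vec j}\lesssim2^{\frac25(j_1+j_2)+\frac35(j_3+j_4)}$. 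Its exponent $H-\frac1{10}(x+y)$ supplies exactly the missing decay.
\end{itemize}

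\textbf{The rich case.} This has the same defect if you run it through counts at fixed $\vec j$, as your framework announces. The bound $\min(2^{j_1+j_3},2^{j_2+j_4})=2^{H-\frac12|j_1+j_3-j_2-j_4|}$ decays in only one combination. The resulting sum $\sum_{s,t}2^{-|s-t|/2}(\lambda*\lambda)(s)(\lambda*\lambda)(t)$ is again $\sim m$ for the same $\lambda$.

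The right step is to skip the level decomposition here. In $\cQ^{a,b}_{0,1,\beta}$ both $\xi_1$ and $\xi_2$ are rich, for every $\beta$. Hence $Q\mapsto(\xi_1,\xi_3)$ and $Q\mapsto(\xi_2,\xi_4)$ are both $O(1)$-to-one. Cauchy--Schwarz applied to $f(Q)=[f(\xi_1)f(\xi_3)][f(\xi_2)f(\xi_4)]$ then gives $\n{f}_{\ell^{2}}^{4}\le\n{\lambda}_{\ell^{2}}^{4}$ directly. No condition on where the maximal level sits is needed, and no corner lemma.

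\textbf{Minor points.}
\begin{itemize}
\item Your stated target of decay in all gaps $j_1-j_i$ is unattainable for type $(3,3)$: levels $(J,J',J,J')$ give exactly $2^{H}$ from the corner bounds. It is also unnecessary, since decay in $|j_1-j_3|+|j_2-j_4|$ suffices by two applications of Young's inequality.
\item Your $(2,2)$ outline matches the paper's mechanism: geometric means of the eight medium-line bounds, one free shell index producing the factor $m$, and Lemma~\ref{lemma:divisor side} supplying the extra shell decay for \eqref{equ:type22-div}.
\end{itemize}
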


The extra divisor weight in \eqref{equ:type22-div} supplies the
summability needed for nonzero modulation: after averaging in $\tau$,
it removes the common medium-line scale and hence the logarithmic
loss present in \eqref{equ:type22}.

\begin{proof}[Proof of Lemma \ref{lemma:type,est}]
We give the four cases of the argument in
\cite[Proof of Lemma 3.3]{HK24}, using the counting bounds established
above. All implicit constants may depend on the fixed richness threshold
$C$. We extend $\lambda_j$ by zero outside $0\le j\le m$.
We repeatedly use the elementary convolution bound
\begin{equation}\label{equ:app-young}
 \sum_{j,k\in\Z}2^{-\delta|j-k|}\lambda_j\lambda_k
 \lesssim_\delta\n\lambda_{\ell^{2}}^{2},
 \quad \delta>0,
\end{equation}
which follows from Young's inequality and
$(2^{-\delta|n|})_{n\in\Z}\in\ell^{1}(\Z)$.

\medskip\noindent
\emph{Case 1: at least one of $\alpha,\beta$ equals $1$.}
By a cyclic relabeling it suffices to consider $\alpha=1$.
For each rich vertex $x$, the hypothesis of Proposition
\ref{prop:count} specifies a unique rich line through $x$.
Let $v\in\Z^{2}\setminus\{0\}$ be a direction vector of that line.
Given the opposite vertex $z$, a neighboring vertex on this line has
form $y=x+tv$. Orthogonality of the two sides requires
\begin{align*}
 B_{a,b}(v,z-x-tv)=0,
 \quad
 t=\frac{B_{a,b}(v,z-x)}{B_{a,b}(v,v)}.
\end{align*}
The denominator is nonzero by Lemma~\ref{lemma:Borth}. Thus $y$, and
then the fourth vertex $x+z-y$, are uniquely determined, if they exist.
Allowing the two possible assignments of the neighbors gives bounded
multiplicity. Since both $\xi_{1}$ and $\xi_{2}$ are rich in this case,
this applies to either opposite pair. Cauchy--Schwarz therefore gives
\begin{align*}
 \sum_{Q\in\cQ_{0,1,\beta}^{a,b}}f(Q)
 \le
 \bigg(\sum_Q f(\xi_{1})^{2}f(\xi_{3})^{2}\bigg)^{1/2}
 \bigg(\sum_Q f(\xi_{2})^{2}f(\xi_{4})^{2}\bigg)^{1/2}
 \lesssim\n f_{\ell^{2}}^{4}
 \le\n\lambda_{\ell^{2}}^{4}.
\end{align*}
This is the precise reconstruction step for which the absence of
rational null directions is essential; no lower bound on the Euclidean
angle between the two side directions is needed.

\medskip\noindent
\emph{Case 2: $(\alpha,\beta)=(2,2)$.}
Fix $\vec j=(j_{1},j_{2},j_{3},j_{4})$ and
$\vec a=(a_{1},a_{2},a_{3},a_{4})$, where
$0\le j_k\le m$ and $1\le a_k<j_k/2+C$.
Write
\begin{align*}
 n(\vec j,\vec a)=\#\cQ_{0}^{a,b}(\vec j,\vec a),
 \quad H(\vec j)=\tfrac12\sum_{k=1}^{4} j_k,
\end{align*}
and read all subscripts cyclically. Lemma~\ref{lemma:Medium-line} gives
\begin{equation}
\label{equ:app-eight}
 n(\vec j,\vec a)\lesssim 2^{E_{k,l}},
 \quad
 E_{k,l}=2j_k-2a_k+a_{k+1}+a_{k+1+l},
 \quad 1\le k\le4,\quad l=1,2.
\end{equation}
Consequently $n\lesssim2^{\sum_{k,l}c_{k,l}E_{k,l}}$ for every
nonnegative array $c$ of total sum one. Taking respectively
\begin{align*}
 c=\frac1{24}
 \begin{pmatrix}2&3\\3&4\\0&6\\3&3\end{pmatrix},
 \quad
 c=\frac1{12}
 \begin{pmatrix}1&2\\1&2\\3&0\\1&2\end{pmatrix}
\end{align*}
yields
\begin{align*}
 n\lesssim2^{H-(j_{1}-j_{2})/12},
 \quad n\lesssim2^{H+(a_{1}-a_{2})/6}.
\end{align*}
Rotations and reflections of the vertex labels give the same bounds
for either sign of each adjacent difference. Taking the geometric mean
of the four bounds chosen to give decay in the $j$-differences and
the four chosen to give decay in the $a$-differences proves
\begin{equation}
\label{equ:app-decay}
 n(\vec j,\vec a)\lesssim2^{H-\delta D(\vec j,\vec a)},
 \quad
 D(\vec j,\vec a)=\sum_{k=1}^{4}
 (|j_k-j_{k+1}|+|a_k-a_{k+1}|),
\end{equation}
for a fixed sufficiently small $\delta>0$.
Multiplying by $2^{-H}\prod_k\lambda_{j_k}$ and summing gives
\begin{align*}
 \sum_{Q\in\cQ_{0,2,2}^{a,b}}f(Q)
 \lesssim
 \sum_{\vec j,\vec a}2^{-\delta D(\vec j,\vec a)}
 \prod_{k=1}^{4}\lambda_{j_k}
 \lesssim m\n\lambda_{\ell^{2}}^{4}.
\end{align*}
Indeed, with $a_{4}$ fixed, the sum over $a_{1},a_{2},a_{3}$ is uniformly
bounded by three geometric sums. There are $O(m)$ possible values of
$a_{4}$. In the $j$-sum, retaining just the differences
$|j_{1}-j_{2}|$ and $|j_{3}-j_{4}|$ permits two applications of
\eqref{equ:app-young}. This proves \eqref{equ:type22}.

To keep track of the divisor weight, set
\begin{align*}
 W(\vec j,\vec a)=
 \sum_{Q\in\cQ_{0}^{a,b}(\vec j,\vec a)}
 \frac1{d_{a,b}(\xi_{1}-\xi_{4})}.
\end{align*}
Since $d_{a,b}(\xi_{1}-\xi_{4})\ge1$, all eight bounds
$W\lesssim2^{E_{k,l}}$ hold. For $(k,l)=(1,2)$,
Lemma~\ref{lemma:divisor side} improves this to
\begin{align*}
 W\lesssim2^{E_{1,2}-a_{4}/2}.
\end{align*}
Taking the geometric mean of these eight bounds, using this improvement
for the $(1,2)$ term and observing that
$\frac18\sum_{k,l}E_{k,l}=H$, gives
\begin{align*}
 W\lesssim2^{H-a_{4}/16}.
\end{align*}
Combining this with $W\le n$ and \eqref{equ:app-decay} yields
\begin{equation}
\label{equ:app-weighted-decay}
 W(\vec j,\vec a)
 \lesssim 2^{H-\frac\delta2D(\vec j,\vec a)-a_{4}/32}.
\end{equation}
After multiplication by $2^{-H}\prod_k\lambda_{j_k}$, the same
summation as above now has the convergent last sum
$\sum_{a_{4}\ge1}2^{-a_{4}/32}$. Thus
\begin{align*}
 \sum_{Q\in\cQ_{0,2,2}^{a,b}}
 \frac{f(Q)}{d_{a,b}(\xi_{1}-\xi_{4})}
 \lesssim\n\lambda_{\ell^{2}}^{4},
\end{align*}
which proves \eqref{equ:type22-div}.

\medskip\noindent
\emph{Case 3: $(\alpha,\beta)=(3,3)$.}
Let $q_{\vec j}$ count these rectangles with $\xi_k\in S_{j_k}$.
All four richness indices are zero. Lemma~\ref{lemma:corner}, with
cyclic relabeling, gives
\begin{align*}
 q_{\vec j}\lesssim 2^{\min_k(j_k+j_{k+1})}
 =2^{H-\frac12(|j_{1}-j_{3}|+|j_{2}-j_{4}|)}.
\end{align*}
The equality follows by writing the four adjacent sums minus $H$
as the four values $\frac12(\pm(j_{1}-j_{3})\pm(j_{2}-j_{4}))$.
Multiplication by $2^{-H}\prod_k\lambda_{j_k}$, followed by
\eqref{equ:app-young} for the pairs $(j_{1},j_{3})$ and $(j_{2},j_{4})$,
proves \eqref{equ:type,off} in this case.

\medskip\noindent
\emph{Case 4: $(\alpha,\beta)=(2,3)$ or $(3,2)$.}
By relabeling it suffices to consider $(2,3)$. Write $q_{\vec j}$
for the count at fixed $\vec j$. The one-corner bound at the poor
vertices gives
\begin{equation}
\label{equ:app-mixed-corner}
 q_{\vec j}\lesssim2^{\min\{j_{1}+j_{4},j_{2}+j_{3},j_{1}+j_{2}\}}.
\end{equation}
At fixed $\vec a=(a_{1},a_{2},0,0)$, Lemmas
\ref{lemma:corner} and \ref{lemma:Medium-line}, allowing interchange of the
two neighbors of a vertex, give
\begin{align*}
 n(\vec j,\vec a)\lesssim2^{j_{3}+j_{4}+(a_{1}+a_{2})/2}, \quad
 n(\vec j,\vec a)\lesssim2^{j_{1}+j_{2}-(a_{1}+a_{2})}.
\end{align*}
Taking the first estimate to power $3/5$ and the second to power
$2/5$, and summing the resulting geometric series in $a_{1},a_{2}$,
yields
\begin{equation}\label{equ:app-mixed-sum}
 q_{\vec j}\lesssim2^{\frac25(j_{1}+j_{2})+\frac35(j_{3}+j_{4})}.
\end{equation}
For clarity, the decay obtained by combining these bounds can be seen
without further interpolation bookkeeping. Put
$x=j_{1}-j_{3}$, $y=j_{2}-j_{4}$. The four exponents in
\eqref{equ:app-mixed-corner}--\eqref{equ:app-mixed-sum}, minus $H$,
are
\begin{align*}
 \tfrac12(x-y),\quad-\tfrac12(x-y),\quad
 \tfrac12(x+y),\quad-\tfrac1{10}(x+y).
\end{align*}
Their minimum is at most
$-\frac1{10}\max\{|x-y|,|x+y|\}
 =-\frac1{10}(|x|+|y|)$.
Hence
\begin{align*}
 q_{\vec j}\lesssim
 2^{H-\frac1{10}(|j_{1}-j_{3}|+|j_{2}-j_{4}|)}.
\end{align*}
Two applications of \eqref{equ:app-young}, as in Case 3, prove
\eqref{equ:type,off}. This completes all four cases.
\end{proof}

\end{document}